\documentclass[12pt]{article}
\usepackage{amssymb,amsmath,graphicx,amsthm,mathrsfs}
\topmargin 0pt \textheight 20 true cm \textwidth 16  true cm
\oddsidemargin 0pt \evensidemargin 0pt

\newtheorem{Theorem}{Theorem}[section]
\newtheorem{Lemma}[Theorem]{Lemma}
\newtheorem{Proposition}[Theorem]{Proposition}

\newtheorem{Corollary}[Theorem]{Corollary}

\newtheorem{Remark}[Theorem]{Remark}
\numberwithin{equation}{section}



 \def\p{\partial} 
\def \Vh0{\stackrel{\circ}{V}_h}

\newcommand{\q}{\quad}   \newcommand{\qq}{\qquad}

\def\ms{\medskip}  \def\ss{\smallskip}

\newcommand{\lc}
{\mathrel{\raise2pt\hbox{${\mathop<\limits_{\raise1pt\hbox
{\mbox{$\sim$}}}}$}}}

\newcommand{\gc}
{\mathrel{\raise2pt\hbox{${\mathop>\limits_{\raise1pt\hbox{\mbox{$\sim$}}}}$}}}

\newcommand{\ec}
{\mathrel{\raise2pt\hbox{${\mathop=\limits_{\raise1pt\hbox{\mbox{$\sim$}}}}$}}}

\def\bb{\begin{equation}} \def\ee{\end{equation}}

\def\beqn{\begin{eqnarray}}  \def\eqn{\end{eqnarray}}

\def\beqnx{\begin{eqnarray*}} \def\eqnx{\end{eqnarray*}}

\def\bn{\begin{enumerate}} \def\en{\end{enumerate}}

\def\bd{\begin{description}} \def\ed{\end{description}}

\def \om {\Omega}

\def \p  {\partial}
\def \l  {\lambda}
\def \d  {\displaystyle}

\def \e  {\varepsilon}

\def\cN{{\cal N}}

\def\D{\Delta}

\def\deq{\mathop{\buildrel\D\over=}}
\def\bp{\begin{proposition}}
\def\ep{\end{proposition}}
\def\ba{\begin{array}}
\def\ea{\end{array}}
\def\liminf{\mathop{\underline{\rm lim}}}
\def\sqr#1#2{{\vcenter{\vbox{\hrule height.#2pt
              \hbox{\vrule width.#2pt height#1pt \kern#1pt \vrule width.#2pt}
              \hrule height.#2pt}}}}
\def\signed #1{{\unskip\nobreak\hfil\penalty50
              \hskip2em\hbox{}\nobreak\hfil#1
              \parfillskip=0pt \finalhyphendemerits=0 \par}}
\def\endpf{\signed {$\sqr69$}}
\def\see{{\it see} }

\def\ds{\displaystyle}
\def\nm{\noalign{\ms}}
\def\ns{\noalign{\ss}}
\def\d{\delta}
\def\l{\lambda}

\title{Equivalence of three different kinds of optimal control problems for heat equations and its applications}

\author{Gengsheng Wang\thanks{
 School of Mathematics and Statistics, Wuhan University, Wuhan,
430072, China. (wanggs62@yeah.net) The author was partially
supported by National Basis Research
 Program of China (973 Program) under grant 2011CB808002 and
the National Natural Science Foundation of China under grant
11161130003 and 11171264.} \q \q Yashan Xu\thanks{School of
Mathematical Sciences, Fudan University, KLMNS, Shanghai 200433,
China. (yashanxu@fudan.edu.cn) This work was partially supported by  NNSF Grant 10801041,
10831007.} }

\begin{document}

\date{}

\maketitle

\begin{abstract} This paper presents an equivalence theorem for three
different kinds of optimal control problems, which are  optimal
target control problems,  optimal norm control problems and optimal
time control problems. Controlled systems in this study are
internally controlled heat equations.  With the aid of this theorem,
we establish an optimal norm feedback law and
 build up  two algorithms   for
 optimal norms (together with optimal norm controls) and optimal
 time (along with optimal time controls), respectively.

\medskip

\noindent\textbf{AMS Subject Classifications.} 35K05, 49N90 \\

\noindent\textbf{Keywords.} optimal controls, optimal norm, optimal
time, feedback law, heat equations
\end{abstract}

\section{Introduction}

$\;\;\;\;$We begin with introducing the controlled system. Let $T$
be a positive number and $\om\subseteq {\mathbb{R}}^d$ be a bounded
domain with a smooth boundary $\p \om$. Let  $\omega$ be an open and
non-empty subset of $\om$.  Write $\chi_\omega$ for the
characteristic function of $\omega$. Consider the following
controlled heat equation: \bb\label{1.1} \left\{\begin{array}{lll}
\p_ty-\triangle y=\chi_\omega \chi_{(\tau,T)}u&\mbox{in}&\om\times
(0,T),\\
y=0&\mbox{on}&\p \om\times (0,T).\\
y(0)=y_0 &\mbox{in}&\om,
\end{array}\right.\ee
Here  $y_0\in L^2(\Omega)$,  $u\in L^\infty(0,T;L^2(\om))$,
$\tau\in [0,T)$ and $\chi_{(\tau,T)}$ stands for the characteristic
function of $(\tau,T)$. In this equation, controls are restricted
over $\omega\times (\tau,T)$. It is well known that for each $u\in
L^\infty(0,T;L^2(\om))$ and each $y_0\in L^2(\om)$, Equation
(\ref{1.1}) has a unique solution in $C([0,T]; L^2(\Omega))$. {\it
We denote, by $y(\cdot;\chi_{(\tau,T)}u, y_0)$, the solution of
Equation (\ref{1.1}) corresponding to the control $u$ and the
initial state $y_0$.   } Throughout this paper, $\|\cdot\|$ and $<
\cdot, \cdot>$ stand for the usual norm and inner product of the
space $L^2(\Omega)$, respectively.

Next, we will set up, for each $y_0\in L^2(\om)$,  three kinds of
optimal control problems associated with Equation (\ref{1.1}).  For
this purpose, we take a target $z_d\in L^2(\om)$ such that
\begin{eqnarray}\label{1.2}
 z_d\notin \Bigr\{y(T; \chi_{(0,T)}u,0): ~u\in  L^\infty(0,T;L^2(\om))
 \Bigl\}.
\end{eqnarray}
The set on the right hand side of (\ref{1.2}) is called the
attainable set of  Equation (\ref{1.1}). Then we
introduce the following target sets:
$$
B(z_d,r)=\{\hat{y}\in L^2(\om) : \|\hat{y}-z_d\|\leq r\},\;\; r>0.
$$
For each $M\geq 0$, each $r>0$ and each $\tau\in [0,T)$, we define
three sets of controls as follows:

\begin{itemize}
\item
$ \mathcal{U}_{\tau,M}=\{v\in L^\infty(0,T;L^2(\om)): \|v(t)\|\leq
M\; \; \mbox{for a.e.}\;\; t\in (\tau,T)\} $;

\item $\mathcal{U}_{M,r}=\{ v : \exists \; \tau\in [0,T)\; \mbox{s.t.}\;
v\in \mathcal{U}_{\tau,M}\;\;\mbox{and}\;\; y(T;
\chi_{(\tau,T)}v,y_0)\in B(z_d,r)\}$;

\item
$ \mathcal{U}_{ r,\tau}=\{v\in L^\infty(0,T;L^2(\om)): y(T;
\chi_{(\tau,T)}v,y_0)\in B(z_d,r)\}. $

\end{itemize}
For each $u\in \mathcal{U}_{M,r}$, we set
\begin{eqnarray}\label{1.2-1}
\widetilde{\tau}_{M,r}(u)=\sup\{ \tau\in [0,T): u\in \mathcal{U}_{\tau,M}\; \mbox{and}\;   y(T;
\chi_{(\tau,T)}u,y_0)\in B(z_d,r)\}.
\end{eqnarray}

Three kinds of  optimal control problems studied in this paper are
as follows:

\begin{itemize}
\item
 \textbf{ $(OP)^{\tau,M}$:} $\inf \{\|y(T; \chi_{(\tau,T)}u,y_0)-z_d\|^2 : u\in
\mathcal{U}_{\tau,M}\}$;

\item
~\textbf{$(TP)^{M,r}$:} $\sup\{\widetilde{\tau}_{M,r}(u) : u\in
\mathcal{U}_{M,r}\}$;

\item \textbf{ $(NP)^{r,\tau}$:} $\inf\{
\|u\|_{L^\infty(\tau,T;L^2(\om))} : u\in \mathcal{U}_{ r, \tau}\}$.
 \end{itemize}
 We call
$(OP)^{\tau,M}$ as an  optimal target control problem, which is a
kind of  optimal control problem with the observation of the final
state  (see \cite{Lions}, page 177). The problem $(NP)^{r,\tau}$ is
an optimal norm control problem, which is related to the approximate
controllability (see \cite{FPZ}). The problem $(TP)^{M,r}$ is an
optimal time control problem. The aim of controls in $(TP)^{M,r}$ is to delay initiation of
active control  as late as possible, such that the corresponding
solution reaches the target $B(z_d,r)$ at the ending time $T$ (see
\cite{MS}).

 The above three  problems provide the
following three values, respectively:
\begin{itemize}

\item
$r(\tau,M)\equiv  \inf  \{\|y(T; \chi_{(\tau,T)}u,y_0)-z_d\| : u\in
\mathcal{U}_{\tau,M}\}; $
\item
$\tau(M,r)\equiv \sup\{\widetilde{\tau}_{M,r}(u) : u\in
\mathcal{U}_{M,r}\}$;
\item
$ M(r,\tau)\equiv \inf\{ \|u\|_{L^\infty(\tau,T;L^2(\om))} : u\in
\mathcal{U}_{ r, \tau }\}. $
\end{itemize}
The value $r(\tau,M)$ is called the optimal distance to the target for $(OP)^{\tau,M}$; while  values $\tau(M,r)$   and $M(r,\tau)$  are called the
optimal time for $(TP)^{M,r}$ and the optimal norm for
$(NP)^{r,\tau}$, respectively. The optimal controls to these
problems are defined as follows:

\begin{itemize}
\item $u^*$ is called  an  optimal control  to
$(OP)^{\tau,M}$  if $u^*=\chi_{(\tau,T)}v^*$ for some
$v^*\in \mathcal{U}_{\tau,M}$ such that
$\|y(T;\chi_{(\tau,T)}v^*,y_0)-z_d\|=r(\tau,M)$;

\item $u^*$ is called an optimal control to $(TP)^{M,r}$
if $u^*=\chi_{(\tau(M,r),T)}v^*$ for some  $v^*\in \mathcal{U}_{\tau(M,r),M}$ such that
$y(T; \chi_{(\tau(M,r),T)}v^*,y_0)\in B(z_d,r).$

\item $u^*$ is called  an
optimal control  to $(NP)^{r,\tau}$ if $u^*=\chi_{(\tau,T)}v^*$ for some $v^*\in \mathcal{U}_{r,\tau}$ and
$\|u^*\|_{L^\infty(\tau,T;L^2(\om))}=M(r,\tau)$.

\end{itemize}

 Throughout the  paper, the following notation will be used frequently:
\begin{eqnarray}\label{3.2}
r_T(y_0)\equiv\|y(T;0, y_0)-z_d\|.
\end{eqnarray}

The main purpose of this study is to present an equivalence
theorem for the above-mentioned three kinds of optimal control
problems and its applications. This theorem can be stated, in plain language, as follows:
\begin{itemize}
\item $(OP)^{\tau,M}\Leftrightarrow (TP)^{M,r(\tau,M)}
\Leftrightarrow (NP)^{r(\tau,M),\tau}$\;\;when $M>0$ and $\tau\in
[0,T)$;

\item  $(NP)^{r,\tau}\Leftrightarrow (OP)^{\tau,M(r,\tau)}\Leftrightarrow
(TP)^{M(r,\tau),r}$ when $r\in (0,r_T(y_0))$ and $\tau\in [0,T)$;

\item  $(TP)^{M,r}\Leftrightarrow (NP)^{r,\tau(M,r)}\Leftrightarrow  (OP)^{\tau(M,r),M}$ when $M>0$ and $r\in [r(0,M),r_T(y_0))$.
 \end{itemize}
Here, by $(P_1)\Leftrightarrow (P_2)$, we mean that  problems
$(P_1)$ and $(P_2)$ have the same optimal controls.  Based on the
equivalence theorem, the study  of one kind of optimal control
problem can be carried out by investigating one of the other two kinds of
optimal control problems. In particular, one can use some existing
fine properties for optimal target controls to derive properties of
optimal norm controls and optimal time controls.

An important application of the equivalence theorem is to build up a feedback law for
norm optimal control problems. We will roughly  present this result in what follows. Notice that
 Problem $(NP)^{r,\tau}$
depends on $\tau\in [0,T)$ and  $y_0\in L^2(\om)$, when $r$  and $z_d$ are fixed. To stress this dependence, we denote, by $(NP)^{r,\tau}_{y_0}$, the problem $(NP)^{r,\tau}$ with
the initial state $y_0$.
Throughout this paper, we let $A$ be the operator on $L^2(\Omega)$ with domain $D(A)= H^1_0(\om)\bigcap H^2(\om)$ and defined by $Ay=\triangle y$ for each $y\in D(A)$. Write
$\{e^{t\triangle}: \; t\geq 0\}$
for the semigroup generated by $A$.
 By the equivalence theorem and some characteristics of the
target optimal control problems, we construct a map $F:[0,T)\times L^2(\Omega)\rightarrow L^2(\om)$ holding   properties:

\noindent $(i)$ For each $y_0\in L^2(\Omega)$ and each $\tau\in [0,T)$, the evolution equation
\begin{eqnarray*}
\left\{\begin{array}{ll} \ns\dot{y}(t)-A
y(t)=\chi_\omega\chi_{(\tau,T)}(t)F(t,y(t)),\q &t\in (0,T)\\
\nm y(0)=y_0,
\end{array}\right.
\end{eqnarray*}
 has a unique mild solution, which will be  denoted by
 $y_{F,\tau,y_0}(\cdot)$. Here $\chi_\omega$ is treated as an operator on $L^2(\Omega)$ in the usual way.

\noindent ${(ii)}$ For each $y_0\in L^2(\om)$ and each $\tau\in [0,T)$, $ \chi_{(\tau,T)}(\cdot)F( \cdot,y_{F,\tau,y_0}(\cdot))$ is the optimal control to
Problem $(NP)^{r,\tau}_{y_0}$.

\noindent Consequently,  the map $F$ is an optimal feedback
law for the  family
of  optimal norm control problems as follows:
$
\big\{(NP)^{r,\tau}_{y_0} \; : \;\tau\in [0,T),\, y_0\in
L^2(\om)\big\}.
$

With the aid of  the equivalence theorem, we also build up two algorithms  for the optimal norm, along with the
optimal control, to $(NP)^{r,\tau}$  and  the optimal time, together with
the optimal control, to $(TP)^{M,r}$, respectively. These algorithms  show that the optimal norm
and the optimal control to $(NP)^{r,\tau}$ can be approximated through solving a series of  two-point boundary value problems, and the same can be said about the optimal time and the optimal control to $(TP)^{M,r}$.

It deserves to  mention that all  results obtained in this paper
still stand when  Equation (\ref{1.1}) is replaced by
 \bb\label{1.7}
\left\{\begin{array}{lll} \p_ty-\triangle y+ay=\chi_\omega
\chi_{(\tau,T)}u&\mbox{in}&\om\times
(0,T),\\
y=0&\mbox{on}&\p \om\times (0,T),\\
y(0)=y_0 &\mbox{in}&\om,
\end{array}\right.\ee
where $a\in L^\infty(\om\times (0,T))$ and $\om$ is convex (see
Remark~\ref{remark3.11}).

The equivalence between optimal time and  norm control problems have
been studied in \cite{WZ}, \cite{GL} and \cite{FAT} and the
references therein. The  optimal time control problem studied in
these papers is to initiate control from the beginning such that the
corresponding solution (to a controlled system) reaches a target set
in the shortest time.
 Though problems studied in the current paper  differ from
those in \cite{WZ},  our study is partially inspired  by \cite{WZ}.
To the best of our knowledge, the equivalence theorem of the
above-mentioned three kinds of optimal control problems has not been
touched upon. Moreover, the feedback law and the algorithms
established in this paper seem to be new.

The rest of the paper is organized as follows:  Section 2 presents
the equivalence theorem and its proof. Section 3 provides  the above-mentioned two algorithms. In section 4, we build up an optimal norm feedback law.

\section{Equivalence of three optimal control problems}
$\;\;\;\;$Throughout this section, the initial state  $y_0$ is fixed
in $L^2(\om)$. For simplicity, we  write $y(\cdot;\chi_{(\tau,T)}u)$
and $r_T$ for $y(\cdot;\chi_{(\tau,T)}u,y_0)$ and $r_T(y_0)$ (which is defined by (\ref{3.2})),
respectively. The purpose of this section is to prove the following
equivalence theorem:

\begin{Theorem}\label{theorem3.10}
When $M>0$ and $\tau\in [0,T)$, the problems $(OP)^{\tau,M}$,
$(TP)^{M,r(\tau,M)}$ and $(NP)^{r(\tau,M),\tau}$ have the same
optimal control; When $r\in (0,r_T)$ and $\tau\in [0,T)$, the
problems $(NP)^{r,\tau}$, $(OP)^{\tau,M(r,\tau)}$ and
$(TP)^{M(r,\tau),r}$ have the same optimal control; When $M>0$ and
$r\in [r(0,M),r_T)$, the problems $(TP)^{M,r}$, $(NP)^{r,\tau(M,r)}$
and $(OP)^{\tau(M,r),M}$ have the same optimal control.
\end{Theorem}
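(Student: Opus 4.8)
The plan is to establish the equivalences in a cyclic fashion, exploiting that each of the three problems, once its "free" parameter is tuned via $r(\tau,M)$, $M(r,\tau)$ or $\tau(M,r)$, enforces exactly the same constraint set with the same extremal behaviour. First I would record the basic monotonicity and continuity facts: the map $M\mapsto r(\tau,M)$ is nonincreasing (more control budget can only bring the final state closer to $z_d$), the map $\tau\mapsto M(r,\tau)$ is nondecreasing (less time forces a larger norm), and similar monotone dependence for $\tau(M,r)$. Together with the observation $r(0,M)\le r(\tau,M) < r_T$ for $M>0$ — which uses the approximate controllability of the heat equation to guarantee $r(\tau,M)<r_T$ strictly, and which is why the hypotheses restrict $r$ to $[r(0,M),r_T)$ and $r$ to $(0,r_T)$ — these facts let one pass freely between the parameter that is optimized and the parameter that is fixed.

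Next I would prove the first chain: $(OP)^{\tau,M}\Leftrightarrow(NP)^{r(\tau,M),\tau}$. If $u^*=\chi_{(\tau,T)}v^*$ is optimal for $(OP)^{\tau,M}$, then $\|y(T;\chi_{(\tau,T)}v^*)-z_d\|=r(\tau,M)$, so $u^*\in\mathcal U_{r(\tau,M),\tau}$ and $\|v^*\|_{L^\infty(\tau,T;L^2)}\le M$; the reverse inequality $\|v^*\|_{L^\infty}\ge M(r(\tau,M),\tau)$ is immediate, and one must show $M(r(\tau,M),\tau)=M$, i.e.\ that the target-optimal control saturates the norm constraint. This "bang-bang"/saturation property is the standard characterization of optimal target controls via the adjoint state (the Euler–Lagrange condition forces $\|v^*(t)\|=M$ a.e.\ on $(\tau,T)$ whenever the target is genuinely unreachable, which is guaranteed by (\ref{1.2})), and I would cite it from the cited literature on (OP) as permitted. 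Conversely, if $u^*$ is optimal for $(NP)^{r(\tau,M),\tau}$ then $\|u^*\|_{L^\infty}=M(r(\tau,M),\tau)=M$ and $y(T;u^*)\in B(z_d,r(\tau,M))$, so $u^*\in\mathcal U_{\tau,M}$ and attains the (OP) infimum, hence is (OP)-optimal. The second and third chains, and the time-problem $(TP)^{M,r(\tau,M)}$, follow the same template: for (TP) one shows that the optimal time $\tau(M,r(\tau,M))$ equals $\tau$ (otherwise, by strict monotonicity of $r(\cdot,M)$ one could improve the (OP) value or violate the constraint), and that a (TP)-optimal control restricted to $(\tau,T)$ is again an (OP)-optimal control with the same saturated norm.

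The main obstacle is the exact matching of the tuned parameters — proving identities such as $M(r(\tau,M),\tau)=M$, $r(\tau(M,r),M)=r$, and $\tau(M(r,\tau),r)=\tau$. Each of these is where genuine PDE input enters: one direction of each identity is soft (monotonicity plus the definitions), but the other direction needs that the extremal control for one problem is feasible and extremal for the other, which rests on the uniqueness and the norm-saturation of target-optimal controls, together with the strict inequality $r(\tau,M)<r_T$ (no optimal control can "do nothing" on a nontrivial time interval) and continuity in $\tau$ of the relevant value functions. I would organize the proof as a sequence of short lemmas — (a) monotonicity/continuity of $r(\tau,\cdot)$, $M(\cdot,\tau)$, $\tau(M,\cdot)$; (b) saturation of (OP)-optimal controls; (c) the three parameter-matching identities; (d) assembling the equivalences — with step (c) carrying essentially all the difficulty, and everything else being bookkeeping with the definitions in Section 1.
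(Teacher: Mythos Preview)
Your proposal is correct and follows essentially the same route as the paper: establish monotonicity and continuity of the value maps $M\mapsto r(\tau,M)$, $\tau\mapsto M(r,\tau)$, $\tau\mapsto r(\tau,M)$, use the bang-bang property of $(OP)^{\tau,M}$ (via the adjoint characterization and unique continuation) to obtain the parameter-matching identities $M=M(r(\tau,M),\tau)$, $\tau=\tau(M(r,\tau),r)$, $r=r(\tau(M,r),M)$, and then read off the cyclic equivalences. The paper organizes this exactly as your steps (a)--(d), with the only point you may be underestimating being that the \emph{continuity} of $\tau\mapsto M(r,\tau)$ (needed for the intermediate-value arguments in (c)) requires a nontrivial limit passage in the Pontryagin system rather than following from soft considerations.
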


\subsection{Some properties on  optimal target control problems}

\begin{Lemma}\label{lemma2.1}
Let $M\geq 0$ and $\tau\in [0,T)$. Then,  $(i)$  $(OP)^{\tau,M}$ has optimal
controls; $(ii)$  $r(\tau,M)>0$;  $(iii)$ $u^*$ is  an optimal control
 to $(OP)^{\tau,M}$  if and only if  $u^*\in L^\infty(0,T;L^2(\Omega))$, with  $u^*=0$ over $(\tau,T)$,   satisfies
\begin{eqnarray}\label{2.1}
\int_0^T< \chi_{(\tau,T)}(t)\chi_\omega  p^*(t),
u^*(t)>dt=\max_{v(\cdot)\in
\mathcal{U}_{\tau,M}}\int_0^T<\chi_{(\tau,T)}(t)\chi_\omega p^*(t),
v(t)>dt,
\end{eqnarray}
where $p^*$ is the solution to the equation:
\bb\label{2.2} \left\{\begin{array}{lll} \p_tp^*+\triangle
p^*=0&\mbox{in}&\om\times
(0,T),\\
p^*=0&\mbox{on}&\p \om\times (0,T),\\
p^*(T)=-(y^*(T)-z_d)&\mbox{in}&\om
\end{array}\right.
\ee
with $y^*(\cdot)$ solving the equation:
 \bb\label{2.3} \left\{\begin{array}{lll} \p_ty^*-\triangle
y^*=\chi_\omega\chi_{(\tau,T)} u^*&\mbox{in}&\om\times
(0,T),\\
y^*=0&\mbox{on}&\p \om\times (0,T),\\
y^*(0)=y_0&\mbox{in}&\om.
\end{array}\right.
\ee
\end{Lemma}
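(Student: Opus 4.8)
The plan is to treat the three assertions in order, since each feeds into the next. For $(i)$, I would use the direct method of the calculus of variations. Fix a minimizing sequence $v_n \in \mathcal{U}_{\tau,M}$ for the functional $v \mapsto \|y(T;\chi_{(\tau,T)}v)-z_d\|^2$. Since $\|v_n(t)\| \leq M$ a.e. on $(\tau,T)$, the sequence $\chi_{(\tau,T)}v_n$ is bounded in $L^2(0,T;L^2(\om))$ (indeed in $L^\infty$), so after passing to a subsequence it converges weakly in $L^2(0,T;L^2(\om))$ to some $\chi_{(\tau,T)}v^*$, and one checks the weak limit still satisfies the pointwise bound $\|v^*(t)\|\le M$ a.e. on $(\tau,T)$, so $v^* \in \mathcal{U}_{\tau,M}$ (the set $\mathcal{U}_{\tau,M}$ is convex and closed in $L^2$, hence weakly closed). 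Because the control-to-state map $u \mapsto y(T;\chi_{(\tau,T)}u)$ is affine and continuous from $L^2(0,T;L^2(\om))$ to $L^2(\om)$ (the solution operator of the heat equation, plus a fixed term $y(T;0)$), it is weakly continuous, so $y(T;\chi_{(\tau,T)}v_n) \rightharpoonup y(T;\chi_{(\tau,T)}v^*)$; then lower semicontinuity of the norm under weak convergence gives that $v^*$ is a minimizer.

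For $(ii)$, suppose $r(\tau,M)=0$. Then there is a minimizer $v^*$ (from part $(i)$) with $y(T;\chi_{(\tau,T)}v^*)=z_d$, or at least a minimizing sequence whose states converge strongly to $z_d$. In either case $z_d$ would lie in the closure of $\{y(T;\chi_{(\tau,T)}v): v\in \mathcal{U}_{\tau,M}\}$, hence in the closure of the attainable set $\{y(T;\chi_{(0,T)}u): u\in L^\infty(0,T;L^2(\om))\}$. But approximate controllability of the heat equation from any subdomain $\omega$ tells us this attainable set is dense in $L^2(\om)$ — which would force $z_d$ into its closure trivially, so I need to be slightly more careful: actually assumption (\ref{1.2}) says $z_d$ is not in the attainable set itself, and the point is that for a \emph{fixed} minimizer one has $y(T;\chi_{(\tau,T)}v^*) = z_d$ exactly, contradicting (\ref{1.2}) (the control $\chi_{(\tau,T)}v^*$ is admissible in $L^\infty(0,T;L^2(\om))$). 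Since part $(i)$ guarantees the infimum is attained, $r(\tau,M)=0$ would be achieved, contradicting (\ref{1.2}); hence $r(\tau,M)>0$.

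For $(iii)$, this is the first-order optimality condition (Pontryagin / variational inequality) for a convex minimization problem. Writing $J(v) = \|y(T;\chi_{(\tau,T)}v)-z_d\|^2$, which is convex and Fréchet differentiable in $v$, the minimizer $u^*$ (identified with $\chi_{(\tau,T)}v^*$, i.e.\ $u^*=0$ on $(0,\tau)$) is characterized by $\langle J'(u^*), v - u^* \rangle \ge 0$ for all admissible $v$. Computing $J'(u^*)$ via the adjoint state $p^*$ solving (\ref{2.2}) with terminal datum $-(y^*(T)-z_d)$ — the standard duality pairing integration-by-parts in time turns $\langle y(T;\chi_{(\tau,T)}v)-z_d, \text{(state variation)}\rangle$ into $\int_0^T \langle \chi_{(\tau,T)}\chi_\omega p^*, v \rangle\,dt$ up to sign — yields that the variational inequality is exactly the statement that $u^*$ maximizes $\int_0^T \langle \chi_{(\tau,T)}\chi_\omega p^*, v\rangle\,dt$ over $v \in \mathcal{U}_{\tau,M}$, which is (\ref{2.1}). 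Both directions follow because for a convex differentiable functional the variational inequality is necessary and sufficient for a global minimum.

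The main obstacle I anticipate is the bookkeeping in $(iii)$: one must be careful about the coupling between $u^*$ and $p^*$ (the adjoint equation's terminal condition depends on $y^*(T)$, which depends on $u^*$), so the ``if'' direction is not a pure reformulation — given $u^*$ satisfying (\ref{2.1}) one must verify it is genuinely optimal, which uses convexity of $J$ together with the fact that $p^*$ is determined by $u^*$ through (\ref{2.2})–(\ref{2.3}). The cleanest way is: for any admissible $v$, expand $J(\chi_{(\tau,T)}v) - J(u^*)$ exactly (it is quadratic), use the defining relation between $p^*$ and $y^*$, and show the cross term is precisely $2\int_0^T\langle \chi_{(\tau,T)}\chi_\omega p^*, u^* - v\rangle\,dt \le 0$ by (\ref{2.1}), while the remaining term $\|y(T;\chi_{(\tau,T)}v) - y^*(T)\|^2 \ge 0$. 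The ``only if'' direction is the routine Gâteaux-derivative computation.
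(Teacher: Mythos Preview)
Your arguments for $(i)$ and $(iii)$ are correct and essentially reproduce the standard proofs that the paper simply cites from Lions. The direct method for existence and the convex variational inequality characterization via the adjoint are exactly what is needed, and your remark about handling the coupling in the ``if'' direction of $(iii)$ by expanding the quadratic cost is the right way to close it.

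There is, however, a genuine gap in your argument for $(ii)$. You argue that if $r(\tau,M)=0$ then the minimizer $v^*$ produces $y(T;\chi_{(\tau,T)}v^*,y_0)=z_d$, and you claim this contradicts assumption (\ref{1.2}). But (\ref{1.2}) asserts that $z_d$ does not lie in the attainable set \emph{from the zero initial state}:
\[
z_d\notin\bigl\{\,y(T;\chi_{(0,T)}u,0):u\in L^\infty(0,T;L^2(\Omega))\,\bigr\},
\]
whereas your equality only places $z_d$ in the attainable set from $y_0$. These two sets are not a priori the same, and this is precisely the point the paper addresses. The paper invokes null controllability of the heat equation to show that
\[
\bigl\{\,y(T;\chi_{(0,T)}u,y_0)\,\bigr\}=\bigl\{\,y(T;\chi_{(0,T)}u,0)\,\bigr\},
\]
the reason being that null controllability forces $e^{T\triangle}y_0$ to lie in the range of the input-to-state map, so translating the linear subspace $\{y(T;\chi_{(0,T)}u,0)\}$ by $e^{T\triangle}y_0$ leaves it unchanged. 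Without this step (or some equivalent), your contradiction with (\ref{1.2}) does not follow. You should insert this null-controllability argument before concluding $(ii)$.
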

\begin{proof} $(i)$ and $(iii)$ have been proved in \cite{Lions} (see the proof of Theorem
7.2, Chapter III in \cite{Lions}). The remainder is to  show $(ii)$. For this purpose, we let $u^*$ be an optimal control to $(OP)^{\tau,M}$ and write $y^*(\cdot)$ for $y(\cdot;\chi_{(\tau,T)}u^*,y_0)$.
Then it holds that $r(\tau, M)=\|y^*(T)-z_d\|$ and
\begin{eqnarray}\label{2.3-9.30}
y^*(T)\in \big\{y(T;\chi_{(0,T)}u,y_0) \; :\; u\in L^\infty(0,T;L^2(\om))\big\}.
\end{eqnarray}
On the other hand, by the null controllability for the heat equation (see, for instance, \cite{DZZ} or \cite{EZ}), one can easily check that
$$\big\{y(T;\chi_{(0,T)}u,y_0)\; :\; u\in L^\infty(0,T;L^2(\om))\big\}=\big\{y(T;\chi_{(0,T)}u,0)\; :\; u\in L^\infty(0,T;L^2(\om))\big\}.$$
This, along with  (\ref{2.3-9.30}) and the assumption (\ref{1.2}), indicates that
that
$y^*(T)\neq z_d,$
which implies that
 $r(\tau, M)>0.$ This completes the proof.
\end{proof}

\begin{Lemma}\label{lemma2.3}
Let $M\geq 0$ and $\tau\in [0,T)$. Then, $(i)$  $u^*$ is an
optimal control  to
$(OP)^{\tau,M}$ if and only if   $u^*\in L^\infty(0,T; L^2(\Omega))$, with $u^*=0$ over $(0,\tau)$,  satisfies the  following equality:
\begin{eqnarray}\label{2.6}
u^*(t)=M\frac{\chi_\omega p^*(t)}{\|\chi_\omega
p^*(t)\|},\;\;\mbox{for a.e.}\;\; t\in (\tau,T),
\end{eqnarray}
where $p^*$ is the solution to (\ref{2.2}), with $y^*(\cdot)$ solving the equation (\ref{2.3});
$(ii)$ $(OP)^{\tau,M}$ holds the bang-bang property: any
optimal control $u^*$ satisfies that $\|u^*(t)\|=M$
for a.e. $t\in (\tau,T)$; $(iii)$ the optimal control of
$(OP)^{\tau,M}$ is unique.
\end{Lemma}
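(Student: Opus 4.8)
\noindent The plan is to read off all three statements from the Pontryagin-type maximum condition (\ref{2.1}) supplied by Lemma~\ref{lemma2.1}(iii); the only genuine work is an analytic fact about the adjoint state.

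\textbf{Step 1: decoupling the maximum condition.} The functional $v\mapsto\int_0^T\langle\chi_{(\tau,T)}(t)\chi_\omega p^*(t),v(t)\rangle\,dt$ to be maximized over $\mathcal U_{\tau,M}$ separates in time, because the constraint $\|v(t)\|\le M$ is imposed pointwise a.e.; so maximizing it amounts to maximizing, for a.e.\ $t\in(\tau,T)$, the scalar $\langle\chi_\omega p^*(t),v\rangle$ over the closed ball of radius $M$ in $L^2(\om)$. By Cauchy--Schwarz this pointwise maximum equals $M\|\chi_\omega p^*(t)\|$ and, whenever $\chi_\omega p^*(t)\neq 0$, it is attained \emph{only} at $v=M\chi_\omega p^*(t)/\|\chi_\omega p^*(t)\|$. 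Granting Step~2 below --- namely $\chi_\omega p^*(t)\neq 0$ for a.e.\ $t\in(\tau,T)$ --- condition (\ref{2.1}) is therefore equivalent to the feedback formula (\ref{2.6}) (a measurable-selection argument handles the "only if" part: a control failing (\ref{2.6}) on a positive-measure set can be strictly improved while staying in $\mathcal U_{\tau,M}$). Feeding this equivalence into Lemma~\ref{lemma2.1}(iii) yields both directions of $(i)$: if $u^*$ is optimal it obeys (\ref{2.1}), hence (\ref{2.6}); conversely, if $u^*\in L^\infty(0,T;L^2(\om))$ vanishes on $(0,\tau)$ and satisfies (\ref{2.6}), then (\ref{2.6}) forces $\|u^*(t)\|=M$ a.e.\ on $(\tau,T)$, so $u^*\in\mathcal U_{\tau,M}$, and (\ref{2.6}) implies (\ref{2.1}), whence $u^*$ is optimal by Lemma~\ref{lemma2.1}(iii). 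Here $p^*$ is always built from $u^*$ through (\ref{2.3})--(\ref{2.2}), exactly as in Lemma~\ref{lemma2.1}.

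\textbf{Step 2: $\chi_\omega p^*$ does not vanish a.e.\ --- the crux.} By Lemma~\ref{lemma2.1}(ii) we have $r(\tau,M)>0$; more precisely the argument there (null controllability together with assumption (\ref{1.2})) gives $y^*(T)\neq z_d$ for \emph{every} $u^*\in L^\infty(0,T;L^2(\om))$, hence $p^*(T)=-(y^*(T)-z_d)\neq 0$. Since $p^*$ solves the backward heat equation (\ref{2.2}), for $t<T$ one may write $p^*(t)=e^{(T-t)\triangle}p^*(T)$; as $\{e^{t\triangle}\}$ is an analytic semigroup on $L^2(\om)$, the map $t\mapsto\chi_\omega p^*(t)\in L^2(\om)$ is real-analytic on $[0,T)$, so $g(t):=\|\chi_\omega p^*(t)\|^2$ is real-analytic on $(\tau,T)$. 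If $g$ vanished on a set of positive measure it would vanish identically, i.e.\ $p^*\equiv 0$ on $\omega\times(\tau,T)$; the unique continuation property for the heat equation would then give $p^*\equiv 0$ on $\om\times(\tau,T)$, and since $p^*\in C([0,T];L^2(\om))$ this would force $p^*(T)=0$, a contradiction. Hence $g\not\equiv 0$, its zeros are isolated, and $\chi_\omega p^*(t)\neq 0$ for a.e.\ $t\in(\tau,T)$, which is exactly what Step~1 needs. This fusion of parabolic time-analyticity with unique continuation is the step I expect to be the main obstacle; everything else is soft.

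\textbf{Step 3: bang-bang and uniqueness.} Part $(ii)$ is immediate from $(i)$, since any optimal $u^*$ satisfies (\ref{2.6}) and thus $\|u^*(t)\|=M$ a.e.\ on $(\tau,T)$. For $(iii)$ I would argue by convexity: the map $u\mapsto y(T;\chi_{(\tau,T)}u)$ is affine, $\mathcal U_{\tau,M}$ is convex, and $\widehat y\mapsto\|\widehat y-z_d\|^2$ is strictly convex on $L^2(\om)$. Hence if $u_1^*$ and $u_2^*$ are both optimal, their average is optimal too, and because $\|y_i^*(T)-z_d\|=r(\tau,M)>0$ the equality case of the triangle inequality in the Hilbert space $L^2(\om)$ forces $y_1^*(T)=y_2^*(T)$. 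This common terminal value determines, through (\ref{2.2}), a common adjoint state $p_1^*=p_2^*$, and then (\ref{2.6}) gives $u_1^*=u_2^*$ a.e.\ on $(\tau,T)$; as both vanish on $(0,\tau)$, we conclude $u_1^*=u_2^*$.
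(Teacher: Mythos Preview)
Your proof is correct and follows essentially the same skeleton as the paper's: decouple the integral maximum condition (\ref{2.1}) into a pointwise one, use unique continuation to guarantee $\chi_\omega p^*\neq 0$, read off (\ref{2.6}) and the bang-bang property, then deduce uniqueness. Two minor differences are worth noting: the paper invokes Lin's unique continuation result \cite{FHL} in one line to obtain the stronger statement $\|\chi_\omega p^*(t)\|\neq 0$ for \emph{every} $t\in[0,T)$ (your analyticity argument gives only a.e., which suffices here but the pointwise version (\ref{2.5}) is reused later, e.g.\ in Lemma~\ref{lemma2.6}); and for uniqueness the paper simply cites \cite{FAT,Wang} via the bang-bang property, whereas your direct strict-convexity argument through $y_1^*(T)=y_2^*(T)$ is self-contained and arguably cleaner.
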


\begin{proof} First, the maximal condition (\ref{2.1}) is equivalent to the
following condition:
\begin{eqnarray}\label{2.4}
<\chi_\omega p^*(t), u^*(t)>=\max_{v^0\in B(0,M)}<\chi_\omega
p^*(t), v^0>\;\;\mbox{for a.e.}\;\; t\in (\tau,T),
\end{eqnarray}
where $B(0,M)$ is the closed ball (in $L^2(\om)$), centered at the
origin and of radius $M$. Since $p^*(T)=-(y^*(T)-z_d)\neq 0$ (see
$(ii)$ of Lemma~\ref{lemma2.1}), it follows from the unique
continuation property of the heat equation (see \cite{FHL}) that
\begin{eqnarray}\label{2.5}
\|\chi_\omega p^*(t)\|\neq 0\;\;\mbox{for each}\;\; t\in [0,T).
\end{eqnarray}
Thus, the condition (\ref{2.4}) is equivalent to the condition
(\ref{2.6}). This, along with $(iii)$ of Lemma~\ref{lemma2.1}, yields $(i)$.  Next, $(ii)$  follows
at once from (\ref{2.6}). Finally, $(iii)$ follows  from $(ii)$ (see \cite{FAT} or
\cite{Wang}). This completes the proof.
\end{proof}

\begin{Lemma}\label{lemma2.6} Let $M\ge0$  and $\tau\in [0,T)$. Then
the  two-point boundary value problem: \bb\label{2.9}
\left\{\begin{array}{ccll} \p_t\varphi-\Delta \varphi=M
\chi_{(\tau,T)} \displaystyle\frac{\chi_\omega \psi}{\|\chi_\omega
\psi\|},\;\; &\p_t\psi+\triangle \psi=0&\mbox{in}&\om\times
(0,T),\\
\ns\varphi=0,\;\;& \psi=0&\mbox{on}&\p \om\times (0,T),\\
\ns\varphi(0)=y_0,\;\; &\psi(T)=-(\varphi(T)-z_d)&\mbox{in}&\om
\end{array}\right.
\ee admits a unique solution $(\varphi^{\tau,M}, \psi^{\tau,M})$ in
$C([0,T]; L^2(\om))\times C([0,T]; L^2(\om))$.
 Furthermore, the control, defined by
\begin{eqnarray}\label{2.9-1}
u^{\tau,M}(t)= M\chi_{(\tau,T)}(t)\frac{\chi_\omega
\psi^{\tau,M}(t)}{\|\chi_\omega \psi^{\tau,M}(t)\|}, \;\; t\in [0,
T),
\end{eqnarray}
is the  optimal control to $(OP)^{\tau,M}$, while $\varphi^{\tau,M}$
is the corresponding optimal state. Consequently, it holds that
\begin{equation}\label{2.10}\|\varphi^{\tau,M}(T)-z_d\|=r(\tau,M).\end{equation}
\end{Lemma}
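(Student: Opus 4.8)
The plan is to convert the two-point boundary value problem \eqref{2.9} into a fixed-point problem for the terminal datum of the adjoint equation, and then recognize the solution via the optimality characterization \eqref{2.6} already obtained in Lemma~\ref{lemma2.3}. Concretely, for any $\eta\in L^2(\om)$, let $\psi_\eta$ be the solution of the backward heat equation with $\psi_\eta(T)=\eta$; by the unique continuation property (used in \eqref{2.5}), $\|\chi_\omega\psi_\eta(t)\|\neq0$ for $t\in[0,T)$ whenever $\eta\neq0$, so the source term $M\chi_{(\tau,T)}\chi_\omega\psi_\eta/\|\chi_\omega\psi_\eta\|$ is a well-defined element of $L^\infty(\tau,T;L^2(\om))$ with norm $\le M$. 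Let $\varphi_\eta$ solve the forward heat equation with this source and $\varphi_\eta(0)=y_0$, and define $\Lambda(\eta)=-(\varphi_\eta(T)-z_d)$. A solution of \eqref{2.9} is exactly a fixed point $\eta=\Lambda(\eta)$ with $\eta=\psi^{\tau,M}(T)$, $\psi^{\tau,M}=\psi_\eta$, $\varphi^{\tau,M}=\varphi_\eta$.

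For existence, rather than a direct contraction/compactness argument on $\Lambda$, I would take a shortcut through the already-established facts: by Lemma~\ref{lemma2.1}(i) the problem $(OP)^{\tau,M}$ has an optimal control $u^*$, by Lemma~\ref{lemma2.1}(ii) the associated $p^*(T)=-(y^*(T)-z_d)\neq0$, and by Lemma~\ref{lemma2.3}(i) $u^*$ satisfies \eqref{2.6}. Setting $\psi^{\tau,M}:=p^*$ (the corresponding adjoint state) and $\varphi^{\tau,M}:=y^*$ (the corresponding optimal state), one checks directly from \eqref{2.2}, \eqref{2.3}, \eqref{2.6} that $(\varphi^{\tau,M},\psi^{\tau,M})$ solves \eqref{2.9}; this also shows the control $u^{\tau,M}$ in \eqref{2.9-1} equals $u^*$, hence is optimal, and \eqref{2.10} is then just the definition of $r(\tau,M)$ together with optimality. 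For uniqueness of the solution to \eqref{2.9}: if $(\varphi,\psi)$ is any solution, then reading \eqref{2.9} backwards shows $\psi$ is the adjoint state for the control $u:=M\chi_{(\tau,T)}\chi_\omega\psi/\|\chi_\omega\psi\|$, which by construction satisfies the optimality condition \eqref{2.6}, hence is an optimal control to $(OP)^{\tau,M}$; by the uniqueness of the optimal control, Lemma~\ref{lemma2.3}(iii), this control is unique, and then $\varphi$ (its state) and $\psi$ (the corresponding adjoint) are uniquely determined. Regularity in $C([0,T];L^2(\om))$ for both components is the standard parabolic smoothing/backward well-posedness statement already invoked for \eqref{1.1} and \eqref{2.2}.

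The main obstacle I anticipate is being careful about the well-definedness and measurability of the feedback term $\chi_\omega\psi/\|\chi_\omega\psi\|$ near $t=T$: one must verify that $\|\chi_\omega\psi(t)\|$ stays bounded away from zero on compact subsets of $[0,T)$ but may degenerate only as $t\to T$, and that this does not prevent $u^{\tau,M}$ from lying in $L^\infty(\tau,T;L^2(\om))$ — indeed the normalization forces $\|u^{\tau,M}(t)\|=M$ a.e., so the bound is automatic, but the argument relies essentially on $\psi(T)\neq0$, which in turn comes from \eqref{1.2} via Lemma~\ref{lemma2.1}(ii); tracking this nonvanishing through the equivalence of \eqref{2.9} with the optimality system is the delicate point. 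Everything else is a matching-up of equations already written down in Lemmas~\ref{lemma2.1} and \ref{lemma2.3}.
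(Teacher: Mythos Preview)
Your proposal is correct and follows essentially the same route as the paper: existence via the optimality system (Lemmas~\ref{lemma2.1} and \ref{lemma2.3}(i)) identifying $(y^*,p^*)$ as a solution of \eqref{2.9}, and uniqueness by observing that any solution of \eqref{2.9} produces a control satisfying \eqref{2.6}, hence optimal, hence unique by Lemma~\ref{lemma2.3}(iii). The fixed-point map $\Lambda$ you set up is not actually used, and the paper does not introduce it either; you can safely drop that paragraph.
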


\begin{proof}
By Lemma~\ref{lemma2.1}, $(OP)^{\tau,M}$ has an optimal control
 $u^*$.   Let $y^*$ and $p^*$ be the corresponding solutions to Equation (\ref{2.3}) and Equation (\ref{2.2}), respectively. Clearly, they belong to
$C([0,T]; L^2(\om))$. It follows from $(i)$ of  Lemma~\ref{lemma2.3} that  $u^*$ satisfies
(\ref{2.6}). This, together with (\ref{2.2}) and (\ref{2.3}), shows
that  $(y^*, p^*)$ solves  Equation (\ref{2.9}).

Next, we prove the uniqueness. Suppose that $(\varphi_1,\psi_1)$ and
$(\varphi_2,\psi_2)$ are two solutions of Equation (\ref{2.9}).
Define $u_1$ and $u_2$ by (\ref{2.6}), where $p^*$ is replaced by
$\psi_1$ and $\psi_2$, respectively. It follows from  $(i)$ of Lemma~\ref{lemma2.3} that  $u_1$ and $u_2$ are the optimal control to $(OP)^{\tau,M}$ and
 $\varphi_i(\cdot)=y(\cdot; \chi_{(\tau,T)}u_i)$, $i=1,2$. Then by $(iii)$ of Lemma~\ref{lemma2.3}, $\varphi_1=\varphi_2$. Thus,
 it holds that
$\psi_1(T)=\psi_2(T)$, from which, it follows that $\psi_1=\psi_2$.

Finally, if $(\varphi^{\tau,M}, \psi^{\tau,M})$ is the solution of
Equation (\ref{2.9}),  then it follows from  $(i)$ of Lemma~\ref{lemma2.3}
 that $u^{\tau,M}$ (defined by (\ref{2.9-1})) and
$\varphi^{\tau,M}$ are the optimal control and  the optimal state to
$(OP)^{\tau,M}$. This completes the proof.
\end{proof}

\begin{Remark}\label{remark2.5} {\rm The unique continuation
property (\ref{2.5}) for the adjoint equation plays a very important
role in this paper. This property also holds for the adjoint
equation of Equation (\ref{1.7}),   where $\om$ is convex (see
\cite{PW}). With the help of this fact, one can easily check that
all results in previous lemmas still stand when the controlled
system is Equation (\ref{1.7})}.
\end{Remark}

\subsection{Equivalence of optimal target and norm  control problems}

\begin{Lemma}\label{lemma3.1}
Let $\tau\in [0,T)$. Then the map $M\rightarrow r(\tau,M)$ is
strictly monotonically decreasing and Lipschitz continuous from
$[0,\infty)$ onto $(0,r_T]$. Furthermore, it holds that
\begin{eqnarray}\label{3.11}
r=r(\tau,M(r,\tau))\;\;\mbox{for each}\;\; r\in (0,r_T]
\end{eqnarray} and
\begin{eqnarray}\label{3.12}
M=M(r(\tau,M), \tau)\;\; \mbox{for each}\;\; M\geq 0.
\end{eqnarray}
Consequently, for each $\tau\in [0,T)$, the maps $M\rightarrow
r(\tau,M)$  and $r\rightarrow M(r,\tau)$  are the inverse of each
other.

\end{Lemma}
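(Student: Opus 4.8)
The plan is to establish Lemma~\ref{lemma3.1} by combining monotonicity, a continuity/Lipschitz estimate, surjectivity, and finally the two inversion identities (\ref{3.11}) and (\ref{3.12}). First I would fix $\tau\in[0,T)$ and prove that $M\mapsto r(\tau,M)$ is nonincreasing: if $0\le M_1<M_2$, then $\mathcal{U}_{\tau,M_1}\subseteq\mathcal{U}_{\tau,M_2}$, so the infimum defining $r(\tau,M_2)$ is taken over a larger set, giving $r(\tau,M_2)\le r(\tau,M_1)$. Also $r(\tau,0)=\|y(T;0)-z_d\|=r_T$ by definition (\ref{3.2}). To upgrade to \emph{strict} monotonicity, I would use the bang-bang characterization of the optimal control (Lemma~\ref{lemma2.3}(ii)): the optimal control $u^{\tau,M_1}$ for $(OP)^{\tau,M_1}$ satisfies $\|u^{\tau,M_1}(t)\|=M_1<M_2$ on $(\tau,T)$, so it lies in the interior of the constraint set for $(OP)^{\tau,M_2}$; if we had $r(\tau,M_2)=r(\tau,M_1)$ then $u^{\tau,M_1}$ would also be optimal for $(OP)^{\tau,M_2}$, contradicting that the unique optimal control for $(OP)^{\tau,M_2}$ has norm exactly $M_2$ a.e. (Lemma~\ref{lemma2.3}(ii),(iii)). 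Hence $r(\tau,M_2)<r(\tau,M_1)$.

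Next I would prove Lipschitz continuity of $M\mapsto r(\tau,M)$. The natural route is to compare optimal states: let $\varphi^{\tau,M_i}$ be the optimal states from Lemma~\ref{lemma2.6}, with $\|\varphi^{\tau,M_i}(T)-z_d\|=r(\tau,M_i)$. For $M_1<M_2$, the rescaled control $\frac{M_1}{M_2}u^{\tau,M_2}$ belongs to $\mathcal{U}_{\tau,M_1}$; writing $\tilde y$ for the corresponding state and using linearity of (\ref{1.1}) one gets $\tilde y(T)-z_d = \frac{M_1}{M_2}(\varphi^{\tau,M_2}(T)-z_d) + (1-\frac{M_1}{M_2})(y(T;0)-z_d)$, whence by the triangle inequality and optimality
\begin{eqnarray*}
r(\tau,M_1)\le\|\tilde y(T)-z_d\|\le \tfrac{M_1}{M_2}r(\tau,M_2)+\bigl(1-\tfrac{M_1}{M_2}\bigr)r_T,
\end{eqnarray*}
which combined with $r(\tau,M_1)\ge r(\tau,M_2)$ gives $0\le r(\tau,M_1)-r(\tau,M_2)\le(1-\frac{M_1}{M_2})r_T=\frac{r_T}{M_2}(M_2-M_1)$. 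This is a local Lipschitz bound with constant $r_T/M_2$; to get a uniform Lipschitz constant on $[0,\infty)$ one restricts to $M\ge M_0>0$ and observes $r$ is bounded, or one simply records continuity plus this estimate — but a cleaner uniform Lipschitz bound follows by bounding $\|\varphi^{\tau,M}(T)-\varphi^{\tau,M'}(T)\|$ directly via the control estimate $\|u^{\tau,M}-u^{\tau,M'}\|\lesssim|M-M'|$ (using that $u^{\tau,M}=M\chi_\omega\psi^{\tau,M}/\|\chi_\omega\psi^{\tau,M}\|$) together with continuous dependence of solutions of (\ref{1.1}) on the control; I would pick whichever version the paper's later use requires.

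Then I would establish surjectivity onto $(0,r_T]$: continuity and strict monotonicity give that the image is an interval; $r(\tau,0)=r_T$ gives the right endpoint (attained), and letting $M\to\infty$ one shows $r(\tau,M)\to0$. The latter uses approximate controllability of the heat equation: the attainable set $\{y(T;\chi_{(\tau,T)}v):v\in L^2\}$ is dense in $L^2(\Omega)$, so for any $\e>0$ there is a control $v$ with $\|y(T;\chi_{(\tau,T)}v)-z_d\|<\e$, and then for $M\ge\|v\|_{L^\infty(\tau,T;L^2)}$ we get $r(\tau,M)\le\e$. Since $z_d$ is not attainable (\ref{1.2}) and $r(\tau,M)>0$ always (Lemma~\ref{lemma2.1}(ii)), the infimum $0$ is not attained, so the range is exactly $(0,r_T]$.

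Finally, the two identities and the inverse-map statement. For $r\in(0,r_T]$, by the definition of $M(r,\tau)$ as the optimal norm for $(NP)^{r,\tau}$, I would argue that $(NP)^{r,\tau}$ admits an optimal control $u^*$ with $\|u^*\|_{L^\infty(\tau,T;L^2)}=M(r,\tau)$ and $y(T;\chi_{(\tau,T)}u^*)\in B(z_d,r)$; this $u^*\in\mathcal{U}_{\tau,M(r,\tau)}$ shows $r(\tau,M(r,\tau))\le\|y(T;\chi_{(\tau,T)}u^*)-z_d\|\le r$. For the reverse inequality, if $r(\tau,M(r,\tau))<r$, then by continuity (just proved) there is $M'<M(r,\tau)$ with $r(\tau,M')\le r$, i.e. the optimal control for $(OP)^{\tau,M'}$ steers $y_0$ into $B(z_d,r)$ using norm $\le M'<M(r,\tau)$, contradicting the definition of $M(r,\tau)$ as the infimal such norm; hence (\ref{3.11}) holds. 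Identity (\ref{3.12}), $M=M(r(\tau,M),\tau)$ for $M\ge0$, follows symmetrically: $\le$ because the optimal control of $(OP)^{\tau,M}$ is admissible for $(NP)^{r(\tau,M),\tau}$ with norm $M$ (using the bang-bang property $\|u^{\tau,M}(t)\|=M$), and $\ge$ because a strictly cheaper admissible control would, via strict monotonicity of $r(\tau,\cdot)$, force the attained distance strictly below $r(\tau,M)$, contradicting optimality of $r(\tau,M)$ for $(OP)^{\tau,M}$. Once (\ref{3.11}) and (\ref{3.12}) are in hand, they say precisely that $r\mapsto M(r,\tau)$ and $M\mapsto r(\tau,M)$ are two-sided inverses. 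I expect the main obstacle to be the clean handling of the endpoint behavior and the $M=0$ / $r=r_T$ boundary cases (where strict monotonicity and the bang-bang argument degenerate), together with making sure the Lipschitz constant claimed is actually uniform on all of $[0,\infty)$ rather than only locally; the compactness/density input (approximate controllability) is standard but must be cited carefully to pin down the range as the \emph{half-open} interval $(0,r_T]$.
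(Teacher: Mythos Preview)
Your overall architecture matches the paper's proof closely: strict monotonicity via the bang-bang property, the limit $r(\tau,M)\to 0$ via approximate controllability, and the two inversion identities argued by contradiction using continuity and strict monotonicity are all handled essentially as the paper does them. The paper also derives (\ref{3.12}) slightly more economically than your direct two-sided argument: once (\ref{3.11}) is known, apply it with $r=r(\tau,M)$ to get $r(\tau,M)=r(\tau,M(r(\tau,M),\tau))$ and then invoke strict monotonicity (injectivity) of $M\mapsto r(\tau,M)$.

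The one genuine gap is the Lipschitz step. Your convex-combination estimate gives
\[
r(\tau,M_1)-r(\tau,M_2)\le \Bigl(1-\tfrac{M_1}{M_2}\Bigr)r_T=\tfrac{r_T}{M_2}(M_2-M_1),
\]
which is only a \emph{local} Lipschitz bound that blows up as $M_2\to 0$; the statement asserts Lipschitz continuity on all of $[0,\infty)$. Your proposed repair via $\|u^{\tau,M}-u^{\tau,M'}\|\lesssim |M-M'|$ is not available at this stage (it would require Lipschitz dependence of the optimal control on $M$, which is stronger than what you have). The paper's fix is a one-line tweak of your own computation: with $u^*$ optimal for $(OP)^{\tau,M_2}$, write
\[
\varphi^{\tau,M_2}(T)-z_d=\Bigl(\tilde y(T)-z_d\Bigr)+\Bigl(1-\tfrac{M_1}{M_2}\Bigr)\int_\tau^T e^{(T-s)\triangle}\chi_\omega u^*(s)\,ds
\]
and apply the reverse triangle inequality together with $\|\tilde y(T)-z_d\|\ge r(\tau,M_1)$ and $\bigl\|\int_\tau^T e^{(T-s)\triangle}\chi_\omega u^*(s)\,ds\bigr\|\le M_2(T-\tau)$. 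The $M_2$ in the denominator cancels, yielding the uniform bound
\[
0\le r(\tau,M_1)-r(\tau,M_2)\le (M_2-M_1)(T-\tau),
\]
valid for all $0\le M_1<M_2$. This is the missing idea; once you insert it, your proof is complete and coincides with the paper's.
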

\begin{proof}
 The
proof will be carried out by several steps as follows:

\noindent{\it Step 1.  It holds that $r(\tau,0 )=r_T$ and
$\lim_{M\rightarrow\infty}r(\tau,M)=0$.}

 The first equality above follows directly from  the definitions of $r_T$ and $r(\tau,0 )$. Now, we prove the second one. Let
$\varepsilon>0$. By the approximate controllability for the heat
equation (see \cite{FPZ}),  there is a control $u_\varepsilon\in
L^\infty(\tau,T; L^2(\om))$  such that
$y(T;\chi_{(\tau,T)}u_\varepsilon)\in B(z_d,\varepsilon)$.
Clearly,  $u_\varepsilon\in
\mathcal{U}_{\tau,M}$ for all $M\geq
\|u_\varepsilon\|_{L^\infty(\tau,T; L^2(\om))}$.
 Then, by
the optimality of $r(\tau,M)$, we deduce that
$
r(\tau,M)\leq\|y(T;\chi_{(\tau,T)}u_\varepsilon)-z_d\|\leq
\varepsilon$ for each $ M\geq
\|u_\varepsilon\|_{L^\infty(\tau,T; L^2(\om))},
$
 from which, it follows that $\lim_{M\rightarrow\infty}r(\tau,M)=0$.

\noindent{\it Step 2.  The map $M\rightarrow r(\tau,M)$ is strictly
monotonically decreasing.}

Let $0\leq M_1<M_2$. We claim that $r(\tau,M_2)<r(\tau,M_1)$.
Seeking for a contradiction,  suppose that $r(\tau,M_2)\geq
r(\tau,M_1)$. Then  optimal control $u_1$ to $(OP)^{\tau,M_1}$ would
satisfy that
$
\|y(T; \chi_{(\tau,T)}u_1)-z_d\|=r(\tau,M_1)\leq
r(\tau,M_2)$ and $u_1\in\mathcal{U}_{\tau,M_1}\subset
\mathcal{U}_{\tau,M_2}.
$
These yield that $u_1$ is the optimal control to  $(OP)^{\tau,M_2}$.
By the bang-bang property of  $(OP)^{\tau,M_2}$ (see $(ii)$ of
Lemma~\ref{lemma2.3}), it holds that $\|u_1(t)\|=M_2$ for a.e. $t\in
(\tau,T)$. This contradicts to  that $u_1\in
\mathcal{U}_{\tau,M_1}$, since $M_1<M_2$.

\noindent{\it Step 3. The map $M\rightarrow r(\tau,M)$ is Lipschitz
continuous.}

 Let $M_1, M_2\in [0,\infty)$. Without loss of generality, we can assume that
$0\leq M_1< M_2$.  Let $u^*$ be optimal control to $(OP)^{\tau,M_2}$.
Then by
the monotonicity of the map $M\rightarrow r(\tau,M)$ and  the optimality
of  $u^*$ to $(OP)^{\tau,M_2}$, we see that
$$
\ba{ll}
\ns &\ds r(\tau,M_1)> r(\tau,M_2)=\left\| e^{T\triangle}
y_0+\int^T_{\tau}e^{(T-s)\triangle}u^*(s)ds-z_d\right\|\\
\ns\ge&\ds\left\| e^{T\triangle}
y_0+\int^T_{\tau}e^{(T-s)\triangle}\frac{M_1}{M_2}u^*(s)ds-z_d\right\|-
\frac{(M_2-M_1)}{M_2}\left\|\int^T_{\tau}e^{(T-s)\triangle}u^*(s)ds\right\|.\ea
$$
Since $\displaystyle\frac{M_1}{M_2}u^*\in \mathcal{U}_{\tau,M_1}$, it follows from the definition of
$r(\tau,M_1)$ that
$$
\ds\left\| e^{T\triangle}
y_0+\int^T_{\tau}e^{(T-s)\triangle}\frac{M_1}{M_2}u^*(s)ds-z_d\right\|\geq r(\tau,M_1).
$$
Because $\|u^*\|_{L^\infty(\tau,T;L^2(\om))}\leq M_2$, we find that
$$
\int_\tau^T\|e^{(T-s)\triangle}\|\|u^*(s)\|ds\leq M_2(T-\tau).
$$
Putting the above three estimates together leads to the estimate as follows:
$$
r(\tau,M_1)>r(\tau,M_2)\geq r(\tau,M_1)-(M_2-M_1),
$$
from which, it follows that
$$| r(\tau,M_1)- r(\tau,M_2) |\le |M_1-M_2|(T-\tau)\;\;\mbox{for all}\;\;
M_1,M_2\in [0,\infty).
$$

\noindent{\it Step 4. The proof of (\ref{3.11})}

First of all, by the definitions of $r_T$ , one can easily check
that $M( r_T,\tau)=0$ and
\begin{eqnarray}\label{3.13}
r_T=r( \tau,0)=r(\tau,M(r_T,\tau)).
\end{eqnarray}

Then, let $r\in (0,r_T)$. By Step 2,  $M(r,\tau)>0$
for this case. We are going to prove the following two claims:

\noindent{\it Claim one: $r\geq r(\tau,M(r,\tau))$} and {\it Claim two:  $r\leq r(\tau,M(r,\tau))$.}

\noindent Clearly, these claims, together with (\ref{3.13}), lead to (\ref{3.11}).
To prove the first claim, we let $u$ be an optimal control to
$(NP)^{r,\tau}$ (the existence of  such a control is provided in
\cite{FPZ}). Then it holds that
$
\|y(T; \chi_{(\tau,T)}u)-z_d\|\leq r$ and $ u\in
\mathcal{U}_{\tau,M(r,\tau)}$.
These,  along with the definition of $r(\tau,M)$,  shows {\it Claim
one}.

Now we show the second claim. Seeking a contradiction, suppose
that $r>r(\tau,M(r,\tau))$. Since the map $M\rightarrow r(\tau,M)$
is continuous and strictly monotonically decreasing,  there would be
a $M_1\in (0,M(r,\tau))$ such that $r(\tau,M_1)=r$. Thus, the
optimal control $u_1$  to $(OP)^{\tau,M_1}$ satisfies that
\begin{eqnarray}\label{3.15}
\|u_1\|_{L^\infty(\tau,T;L^2(\om))}=M_1<M(r,\tau)\;\;\mbox{and}\;\;\|y(T;\chi_{(\tau,T)}u_1)-z_d\|=r(\tau,M_1)=r.
\end{eqnarray}
The second equality in (\ref{3.15}) implies that $u_1\in
\mathcal{U}_{r,\tau}$, which, together with  the optimality of
$M(r,\tau)$, indicates  that $M(r,\tau)\leq
\|u_1\|_{L^\infty(\tau,T;L^2(\om))}$. This  contradicts to the first
inequality in  (\ref{3.15}).

\noindent{\it Step 5. The proof of (\ref{3.12})}.

One can easily check that $r(\tau,M)\in (0,r_T]$ whenever $M\geq 0$
and $\tau\in [0,T)$. Thus, we can make use of (\ref{3.11}) to get
that
\begin{eqnarray}\label{3.16}
r(\tau,M)=r(\tau,M(r(\tau,M),\tau)),\;\;M\geq 0, \tau\in [0,T).
\end{eqnarray}
Since the map $M\rightarrow r(\tau,M)$ is strictly monotone,
(\ref{3.12}) follows from (\ref{3.16}) at once.

In summary, we complete the proof.
\end{proof}

\begin{Proposition}\label{theorem3.3}
$(i)$ The optimal control to $(OP)^{\tau,M}$, where $M\geq 0$ and $\tau\in
[0,T)$,
 is an optimal control to
$(NP)^{r(\tau,M),\tau}$. $(ii)$ Any optimal control to
$(NP)^{r,\tau}$, where $\tau\in [0,T)$ and $r\in (0,r_T]$, is the
optimal control to $(OP)^{\tau,M(r,\tau)}$. $(iii)$ For each  $\tau\in [0,T)$ and  each $r\in (0,r_T]$,
$(NP)^{r,\tau}$  holds the
bang-bang property (i.e.,  any optimal control $u^*$ satisfies that $\|u^*(t)\|=M(r,\tau)$ for a.e. $t\in (\tau,T)$) and the optimal control to $(NP)^{r,\tau}$ is unique.
\end{Proposition}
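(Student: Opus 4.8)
The plan is to deduce all three assertions from the already-established equivalence between $(OP)^{\tau,M}$ and $(NP)^{r,\tau}$ via the inverse relationship in Lemma~\ref{lemma3.1}, combined with the fine structure of optimal target controls from Lemmas~\ref{lemma2.1}--\ref{lemma2.6}.

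For $(i)$, I would start from an optimal control $u^*$ to $(OP)^{\tau,M}$. By $(ii)$ of Lemma~\ref{lemma2.3} it satisfies the bang-bang property $\|u^*(t)\|=M$ a.e.\ on $(\tau,T)$, so $\|u^*\|_{L^\infty(\tau,T;L^2(\om))}=M$ when $M>0$ (and the case $M=0$ is handled separately, since then $r(\tau,0)=r_T$ and the only control is $u^*=0$). Also $\|y(T;\chi_{(\tau,T)}u^*)-z_d\|=r(\tau,M)$, so $u^*\in\mathcal{U}_{r(\tau,M),\tau}$. Now $M(r(\tau,M),\tau)=M$ by \eqref{3.12}, so the admissible norm bound for $(NP)^{r(\tau,M),\tau}$ is exactly $M$, which $u^*$ attains; hence $u^*$ is optimal for $(NP)^{r(\tau,M),\tau}$.

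For $(ii)$, let $u^*$ be an optimal control to $(NP)^{r,\tau}$, so $\|u^*\|_{L^\infty(\tau,T;L^2(\om))}=M(r,\tau)$ and $y(T;\chi_{(\tau,T)}u^*)\in B(z_d,r)$. Then $u^*\in\mathcal{U}_{\tau,M(r,\tau)}$, so the optimal distance satisfies $r(\tau,M(r,\tau))\le \|y(T;\chi_{(\tau,T)}u^*)-z_d\|\le r$; but \eqref{3.11} gives $r(\tau,M(r,\tau))=r$, forcing equality $\|y(T;\chi_{(\tau,T)}u^*)-z_d\|=r=r(\tau,M(r,\tau))$. Thus $u^*$ achieves the infimum defining $(OP)^{\tau,M(r,\tau)}$ within $\mathcal{U}_{\tau,M(r,\tau)}$, i.e.\ it is an optimal control to $(OP)^{\tau,M(r,\tau)}$. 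Finally $(iii)$ follows: $(iii)$ of Lemma~\ref{lemma2.3} says the optimal control of $(OP)^{\tau,M(r,\tau)}$ is unique and satisfies $\|u^*(t)\|=M(r,\tau)$ a.e.\ on $(\tau,T)$; by $(ii)$ every optimal control of $(NP)^{r,\tau}$ is that same control, so it is unique and inherits the bang-bang property. (When $r=r_T$, $M(r_T,\tau)=0$ and uniqueness and bang-bang are trivial with $u^*=0$.)

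The only delicate point is the bookkeeping around the endpoint cases $M=0$ and $r=r_T$, and making sure the two directions $(i)$ and $(ii)$ really are converses on the full ranges — this rests entirely on the bijectivity statement of Lemma~\ref{lemma3.1} together with the bang-bang property, so there is no substantial analytic obstacle; the heavy lifting (existence, Pontryagin characterization, unique continuation, monotonicity and Lipschitz continuity of $M\mapsto r(\tau,M)$) has already been done. I would simply be careful to invoke \eqref{3.11} and \eqref{3.12} in the correct direction in each part.
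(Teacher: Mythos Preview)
Your proposal is correct and follows essentially the same route as the paper: for $(i)$ you use the bang-bang property of $(OP)^{\tau,M}$ together with \eqref{3.12}, for $(ii)$ you use admissibility plus \eqref{3.11} to force equality $\|y(T;\chi_{(\tau,T)}u^*)-z_d\|=r$, and $(iii)$ is deduced from $(ii)$ and Lemma~\ref{lemma2.3}, exactly as the paper does. Your treatment is slightly more explicit (spelling out the sandwich argument in $(ii)$ and the endpoint cases $M=0$, $r=r_T$), but there is no substantive difference.
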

\begin{proof}
$(i)$ The optimal control $u$ to  $(OP)^{\tau,M}$  satisfies that
$
y(T;\chi_{(\tau,T)}u)\in
B(z_d,r(\tau,M))$, $\|u\|_{L^\infty(\tau,T;L^2(\om))}=M$ and $u=0$ over $(0,\tau)$.
These, together with (\ref{3.12}), indicate that
 $u$ is an optimal control
to $(NP)^{r(\tau,M),\tau}$. $(ii)$ An optimal control $v$ to
$(NP)^{r,\tau}$, where $\tau\in [0,T)$ and $r\in (0,r_T]$, satisfies that
$
\|v\|_{L^\infty(\tau,T;L^2(\om))}=M(r,\tau)$, $\|y(T;\chi_{(\tau,T)}v)-z_d\|
\leq r$ and $v=0$ over $(0,\tau)$.
These, along with (\ref{3.11}), yields that
 that $v$ is the optimal
control to $(OP)^{\tau,M(r,\tau)}$.
$(iii)$ The bang-bang property and the uniqueness of $(NP)^{r,\tau}$  follow from $(ii)$ and Lemma~\ref{lemma2.3}.
 This completes the proof.

\end{proof}

\subsection{Equivalence of optimal norm and time  control problems}

\begin{Lemma}\label{lemma3.5}
Let  $r\in (0,r_T)$ and $M\geq M(r,0)$. Then,  $(TP)^{M,r}$ has
optimal  controls. Moreover, it holds that $\tau(M,r)<T$.
\end{Lemma}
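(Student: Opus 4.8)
The plan is to use the direct method of the calculus of variations. First I would check that $\mathcal{U}_{M,r}\ne\emptyset$, so that $\tau(M,r)$ is a well-defined number in $[0,T]$: since $r\in(0,r_T]$ and $M\ge M(r,0)$, Problem $(NP)^{r,0}$ has an optimal control $u_0$ (existence from \cite{FPZ}) with $\|u_0\|_{L^\infty(0,T;L^2(\om))}=M(r,0)\le M$ and $y(T;\chi_{(0,T)}u_0)\in B(z_d,r)$; hence $u_0\in\mathcal{U}_{M,r}$ and $\tau(M,r)\ge\widetilde\tau_{M,r}(u_0)\ge0$.

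Write $\tau^\ast=\tau(M,r)$. By the two nested suprema defining $\widetilde\tau_{M,r}$ and $\tau(M,r)$, I can choose a sequence of switching times $\sigma_n\to\tau^\ast$ (with $\sigma_n\le\tau^\ast$) and controls $v_n$ with $v_n=0$ on $(0,\sigma_n)$, $\|v_n(t)\|\le M$ for a.e. $t\in(0,T)$, and $y(T;\chi_{(\sigma_n,T)}v_n)\in B(z_d,r)$; here $v_n$ is obtained from an appropriate $u_n\in\mathcal{U}_{M,r}$ by restriction to $(\sigma_n,T)$, which does not change the state at time $T$. The family $\{v_n\}$ is bounded in $L^\infty(0,T;L^2(\om))$, so, along a subsequence, $v_n\rightharpoonup v^\ast$ weakly in $L^2(0,T;L^2(\om))$. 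Three properties then pass to the limit: (a) for every $\delta>0$ one has $v_n=0$ on $(0,\tau^\ast-\delta)$ for all large $n$, whence $v^\ast=0$ on $(0,\tau^\ast)$; (b) the set $\{w:\|w(t)\|\le M\ \text{a.e.}\}$ is convex and strongly closed in $L^2(0,T;L^2(\om))$, hence weakly closed, so $\|v^\ast(t)\|\le M$ a.e., in particular on $(\tau^\ast,T)$; (c) writing $y(T;\chi_{(\sigma_n,T)}v_n)=e^{T\triangle}y_0+\int_0^Te^{(T-s)\triangle}\chi_\omega v_n(s)\,ds$ and noting that $w\mapsto\int_0^Te^{(T-s)\triangle}\chi_\omega w(s)\,ds$ is a bounded linear map from $L^2(0,T;L^2(\om))$ into $L^2(\om)$, hence weakly continuous, the states converge weakly to $y(T;\chi_{(\tau^\ast,T)}v^\ast)$ (using (a)), which therefore lies in the weakly closed set $B(z_d,r)$. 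By (a)--(c), $v^\ast\in\mathcal{U}_{\tau^\ast,M}$ and $y(T;\chi_{(\tau^\ast,T)}v^\ast)\in B(z_d,r)$, so $\widetilde\tau_{M,r}(v^\ast)\ge\tau^\ast$ and, by maximality, $\widetilde\tau_{M,r}(v^\ast)=\tau^\ast$; thus $v^\ast$ is an optimal control for $(TP)^{M,r}$, provided $\tau^\ast<T$.

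It remains to prove $\tau^\ast<T$, and this is the one place where the hypothesis $r<r_T$ is used; the argument here must be by strong convergence rather than weak convergence. Suppose $\sigma_n\to T$. Then, using $\|e^{(T-s)\triangle}\|\le1$,
$$\Big\|\int_0^Te^{(T-s)\triangle}\chi_\omega v_n(s)\,ds\Big\|=\Big\|\int_{\sigma_n}^Te^{(T-s)\triangle}\chi_\omega v_n(s)\,ds\Big\|\le M(T-\sigma_n)\to0,$$
so $y(T;\chi_{(\sigma_n,T)}v_n)\to e^{T\triangle}y_0=y(T;0)$ strongly in $L^2(\om)$; since each of these states lies in the closed set $B(z_d,r)$, this forces $r_T=\|y(T;0)-z_d\|\le r$, contradicting $r\in(0,r_T)$. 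Hence $\tau^\ast<T$, which completes the argument.

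I expect the only genuine obstacle to be careful bookkeeping: extracting from the two nested suprema a single maximizing sequence whose switching times $\sigma_n$ converge up to $\tau(M,r)$ while the controls stay in the constraint set and still steer the state into $B(z_d,r)$ at time $T$, and keeping track of which mode of convergence is needed where --- weak convergence for the existence part (when $\tau^\ast<T$, so the control horizon does not degenerate) and strong convergence for the bound $\tau^\ast<T$ (where the horizon shrinks to a point).
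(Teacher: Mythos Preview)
Your proposal is correct and follows essentially the same route as the paper: non-emptiness of $\mathcal{U}_{M,r}$ via the optimal control of $(NP)^{r,0}$, extraction of a maximizing sequence, weak(-star) compactness, and passage to the limit in the constraint and in the terminal state. The only organizational differences are that the paper first shows the inner supremum $\widetilde{\tau}_{M,r}(u)$ is actually attained for each fixed $u$ (so it can take $\sigma_n=\widetilde{\tau}_{M,r}(u_n)$ directly), whereas you unwind the two nested suprema in one step; and for $\tau(M,r)<T$ the paper argues from the already-constructed limit state $y(T;\chi_{(\tau(M,r),T)}v^*)\in B(z_d,r)$, while you go back to the sequence and use $\|\int_{\sigma_n}^T\cdots\|\le M(T-\sigma_n)\to 0$ --- the same idea in slightly different packaging.
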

\begin{proof}
We first claim that when $u\in\mathcal{U}_{M,r}$, the  supremum in
(\ref{1.2-1}) can be reached, i.e.
\begin{eqnarray}\label{3.18-0}
y(T;\chi_{(\widetilde{\tau}(u),T)}u)\in B(z_d,r)\; \mbox{and}\; u\in \mathcal{U}_{\widetilde{\tau}(u) ,M} \;\; \mbox{when
}\;\;u\in\mathcal{U}_{M,r}.
\end{eqnarray}
Here, we simply write $\widetilde{\tau}(u)$ for
$\widetilde{\tau}_{M,r}(u)$, which is defined by (\ref{1.2-1}). To
this end, we let $u\in\mathcal{U}_{M,r}$.  Then by the definition of
$\widetilde{\tau}(u)$, there is a sequence $\{\tau_n\}\subset [0,T)$
such that $\tau_n\rightarrow\widetilde{\tau}(u)$,
$y(T;\chi_{(\tau_n,T)}u)\in B(z_d,r)$ and $u\in\mathcal{U}_{\tau_n,M}$. From these, (\ref{3.18-0})
follows at once.

Next we notice that  $(NP)^{r,0}$ has optimal controls (see \cite{FPZ}) and  any optimal control to $(NP)^{r,0}$ belongs to $\mathcal{U}_{M(r,0),r}\subset \mathcal{U}_{M,r}$ (since $M\geq
M(r,0)$). These imply  that $\mathcal{U}_{M,r}\neq\emptyset$.
Thus, there is a sequence $\{u_n\}\subset \mathcal{U}_{M,r}$ such
that $\widetilde{\tau}(u_n)\rightarrow \tau(M,r)$.
On the other hand,  by (\ref{3.18-0}),
$y(T;\chi_{(\widetilde{\tau}(u_n),T)}u_n)\in B(z_d,r)$ and $u_n\in\mathcal{U}_{\widetilde{\tau}(u_n),M}$.
Hence, there exist a subsequence of $\{u_n\}$, still denoted in the same way, and a control $v^*\in L^\infty(0,T;L^2(\om))$ such that
 $$\chi_{(\widetilde{\tau}(u_n),T)}u_n\rightarrow
\chi_{(\tau(M,r),T)}v^*\;\;\mbox{weakly star in}\; L^\infty(0,T;L^2(\om))$$ and $$y(T;\chi_{(\widetilde{\tau}(u_n),T)}u_n)\rightarrow y(T;\chi_{(\tau(M,r),T)}v^*).$$
From these, it follows that
$y(T;\chi_{(\tau(M,r),T)}v^*)\in B(z_d,r)$ and $v^*\in \mathcal{U}_{\tau(M,r),M}$.
Hence, $\chi_{(\tau(M,r),T)}v^*$ is an optimal control to $(TP)^{M,r}$.

Finally, since $r<r_T$ and $y(T;\chi_{(\tau(M,r),T)}v^*)\in B(z_d,r)$, it follows that $\tau(M,r)<T$.
This completes the proof.
\end{proof}

\begin{Lemma}\label{lemma3.6}
Let $r\in (0,r_T)$. Then the map $\tau\rightarrow M(r,\tau)$ is
strictly monotonically increasing and continuous  from $[0,T)$ onto
$[M(0,\tau), \infty)$. Furthermore, it holds that
\begin{eqnarray}\label{3.27}
M=M(r, \tau(M,r))\;\;\mbox{for each}\;\; M\in [M(r,0), \infty)
\end{eqnarray}
and
\begin{eqnarray}\label{3.28}
\tau=\tau(M(r,\tau),r)\;\;\mbox{for each}\;\; \tau\in [0,T).
\end{eqnarray}
Consequently, the maps $\tau\rightarrow M(r,\tau)$  and
$M\rightarrow \tau(M,r)$  are the inverse  of each other.

\end{Lemma}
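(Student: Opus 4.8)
The plan is to prove, in order, strict monotonicity of $\tau\mapsto M(r,\tau)$, then its continuity together with the fact that its range is $[M(r,0),\infty)$, and finally the identities (\ref{3.27}) and (\ref{3.28}); the assertion that $\tau\mapsto M(r,\tau)$ and $M\mapsto\tau(M,r)$ are mutually inverse is just a restatement of (\ref{3.27})--(\ref{3.28}). One fact will be used throughout: $M(r,\tau)>0$ for every $\tau\in[0,T)$, since otherwise the zero control would lie in $\mathcal U_{r,\tau}$, i.e. $y(T;0)\in B(z_d,r)$, forcing $r_T\le r$ and contradicting $r<r_T$.

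For strict monotonicity fix $\tau_1<\tau_2<T$ and let $u_2$ be the optimal control of $(NP)^{r,\tau_2}$. Being supported in $(\tau_2,T)\subset(\tau_1,T)$, $u_2$ is also feasible for $(NP)^{r,\tau_1}$, and since $u_2\equiv0$ on $(\tau_1,\tau_2)$ and $M(r,\tau_2)>0$ we get $\|u_2\|_{L^\infty(\tau_1,T;L^2(\om))}=M(r,\tau_2)$; hence $M(r,\tau_1)\le M(r,\tau_2)$. If equality held, $u_2$ would be an optimal control of $(NP)^{r,\tau_1}$, so by the bang-bang property of that problem (Proposition~\ref{theorem3.3}$(iii)$) one would have $\|u_2(t)\|=M(r,\tau_1)>0$ for a.e. $t\in(\tau_1,T)$, contradicting $u_2\equiv0$ on $(\tau_1,\tau_2)$. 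Thus $M(r,\tau_1)<M(r,\tau_2)$.

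Continuity will be checked through the one-sided limits (which exist by monotonicity). For right-continuity at $\tau$ take $\tau_n\downarrow\tau$ and set $\mu:=\lim_nM(r,\tau_n)\ge M(r,\tau)$. For any $M'>M(r,\tau)$, Lemma~\ref{lemma3.1} and (\ref{3.11}) give $r(\tau,M')<r(\tau,M(r,\tau))=r$; restricting the bang-bang optimal control of $(OP)^{\tau,M'}$ (Lemma~\ref{lemma2.3}) to the shorter interval $(\tau_n,T)$ changes it by $O(\sqrt{\tau_n-\tau})$ in $L^2(0,T;L^2(\om))$, so the corresponding final states still converge to a point of the open ball $\{\hat y:\|\hat y-z_d\|<r\}$ and, for $n$ large, the restricted control is feasible for $(NP)^{r,\tau_n}$ with norm $\le M'$; hence $\mu\le M'$, and letting $M'\downarrow M(r,\tau)$ gives $\mu=M(r,\tau)$. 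For left-continuity at $\tau$ take $\tau_n\uparrow\tau$ and optimal controls $u_n$ of $(NP)^{r,\tau_n}$; these are bounded in $L^\infty(0,T;L^2(\om))$ (by $M(r,\tau)$), so along a subsequence $\chi_{(\tau_n,T)}u_n$ converges weakly star to some $\chi_{(\tau,T)}u$ and, by the compactness of the control-to-final-state map used already in the proof of Lemma~\ref{lemma3.5}, $y(T;\chi_{(\tau_n,T)}u_n)\to y(T;\chi_{(\tau,T)}u)\in B(z_d,r)$; weak-star lower semicontinuity of the norm then gives $M(r,\tau)\le\|\chi_{(\tau,T)}u\|_{L^\infty(\tau,T;L^2(\om))}\le\liminf_nM(r,\tau_n)$, so $\lim_nM(r,\tau_n)=M(r,\tau)$. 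Since $M(r,0)$ is the minimum of $M(r,\cdot)$ and $\lim_{\tau\to T^-}M(r,\tau)=\infty$ (else optimal controls of $(NP)^{r,\tau_n}$ with $\tau_n\uparrow T$ would be $L^2$-small, forcing $y(T;0)\in B(z_d,r)$, i.e. $r_T\le r$, exactly as in Lemma~\ref{lemma3.5}), continuity and strict monotonicity show the range of $M(r,\cdot)$ is exactly $[M(r,0),\infty)$.

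Finally I turn to (\ref{3.27}) and (\ref{3.28}). Fix $M\ge M(r,0)$ and put $\tau^*:=\tau(M,r)$; by Lemma~\ref{lemma3.5}, $\tau^*<T$ and $(TP)^{M,r}$ has an optimal control $\chi_{(\tau^*,T)}v^*$ with $\|v^*(t)\|\le M$ a.e. on $(\tau^*,T)$ and $y(T;\chi_{(\tau^*,T)}v^*)\in B(z_d,r)$, the latter showing $M(r,\tau^*)\le M$. If $M(r,\tau^*)<M$, continuity provides $\tau'\in(\tau^*,T)$ with $M(r,\tau')<M$; the optimal control $w$ of $(NP)^{r,\tau'}$ then satisfies $\|w(t)\|\le M(r,\tau')<M$ a.e. on $(\tau',T)$ and $y(T;\chi_{(\tau',T)}w)\in B(z_d,r)$, so $w\in\mathcal U_{M,r}$ and $\widetilde\tau_{M,r}(w)\ge\tau'>\tau^*=\tau(M,r)$, a contradiction; hence $M(r,\tau(M,r))=M$, which is (\ref{3.27}). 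For (\ref{3.28}), given $\tau\in[0,T)$ set $M_0:=M(r,\tau)\ge M(r,0)$ and apply (\ref{3.27}) to get $M(r,\tau(M_0,r))=M_0=M(r,\tau)$; since $\tau\mapsto M(r,\tau)$ is injective, $\tau(M_0,r)=\tau$. The hardest step is the continuity of $\tau\mapsto M(r,\tau)$, specifically the right-continuity: it is the one place where the equivalence with $(OP)$ is genuinely needed (via Lemma~\ref{lemma3.1} and (\ref{3.11})), the crux being that picking $M'$ strictly above $M(r,\tau)$ produces a strict slack $r(\tau,M')<r$ which is not destroyed by the small $L^2$-perturbation of shortening the control interval from $(\tau,T)$ to $(\tau_n,T)$.
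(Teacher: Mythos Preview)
Your proof is correct. The arguments for strict monotonicity, the divergence as $\tau\to T^-$, and the identities (\ref{3.27})--(\ref{3.28}) coincide with the paper's. The genuine difference lies in how continuity of $\tau\mapsto M(r,\tau)$ is established. The paper treats both one-sided limits by the same contradiction scheme: assuming a jump of size $\delta>0$, it passes to the limit in the full Pontryagin system for $(OP)^{\tau_n,M(r,\tau_n)}$ (state, adjoint, and maximum condition (\ref{2.1})) and concludes that the limit control is optimal for $(OP)^{\tau,M(r,\tau)\mp\delta}$ with optimal value $r=r(\tau,M(r,\tau))$, contradicting the strict monotonicity of $M\mapsto r(\tau,M)$ from Lemma~\ref{lemma3.1}. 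Your route is more direct and asymmetric: left-continuity comes from weak-star compactness and lower semicontinuity of the $L^\infty$ norm alone, with no optimality conditions invoked; right-continuity comes from noting that the strict slack $r(\tau,M')<r$ for $M'>M(r,\tau)$ (via (\ref{3.11}) and the strict monotonicity in Lemma~\ref{lemma3.1}) survives the $O(\tau_n-\tau)$ perturbation of the final state caused by truncating the control on $(\tau,\tau_n)$. Your argument is shorter and avoids the adjoint equation entirely; the paper's argument, though heavier, is symmetric (one mechanism handles both sides) and keeps the characterization of Lemma~\ref{lemma2.1} at the center, consistent with how the rest of the paper proceeds.
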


\begin{proof}
We carry out the proof by several steps as follows:

\noindent{\it Step 1. This map is strictly monotonically increasing over
$[0,T)$.}

Let $0\leq\tau_1<\tau_2<T$. We claim that $M(r,\tau_1)<M(r,\tau_2)$.
Seeking for a contradiction, suppose that $M(r,\tau_2)\leq
M(r,\tau_1)$. Then the optimal control $u_2$ to $(NP)^{r,\tau_2}$
 would
satisfy
\begin{eqnarray*}
\|\chi_{(\tau_2,T)}u_2\|_{L^\infty(0,T;L^2(\Omega))}=M(r,\tau_2)\leq
M(r,\tau_1)\;\;\mbox{and}\;\; y(T;\chi_{(\tau_2,T)}u_2)\in B(z_d,r).
\end{eqnarray*}
These imply that $\chi_{(\tau_2,T)}u_2$ is the optimal control to
$(NP)^{r,\tau_1}$. Then,  it follows from the bang-bang property of
$(NP)^{r,\tau_1}$ (see Proposition~\ref{theorem3.3}) that
$
\|\chi_{(\tau_2,T)}u_2(t)\|=M(r,\tau_1)$ over $(\tau_1,\tau_2)$.
This contradicts to the facts that  $\tau_1<\tau_2$ and
$M(r,\tau_1)>0$ (which follows from $r<r_T$).

\noindent{\it Step 2.  $0\leq\tau_1<\tau_2<\cdots<\tau_n\rightarrow
\tau<T\Rightarrow M(r,\tau_n)\rightarrow M(r,\tau)$. }

If this did not hold, then by the monotonicity of the map
$\tau\rightarrow M(r,\tau)$,
 we would have
\begin{eqnarray}\label{3.19}
\lim_{n\rightarrow\infty}M(r,\tau_n)=M(r,\tau)-\delta\;\;\mbox{for
some}\;\; \delta>0.
\end{eqnarray}
Let $u_n$ and  $y_n$  be the optimal control and the optimal state
to $(OP)^{\tau_n,M(r,\tau_n)}$, respectively. Then, it follows from
Lemma~\ref{lemma2.1} that
\begin{eqnarray}\label{3.20}
\int_{0}^T<\chi_\omega p_n, \chi_{(\tau_n,T)} u_n>dt\geq
\int_{0}^T<\chi_\omega p_n, \chi_{(\tau_n,T)}v_n>dt\;\;\mbox{for
each}\;\; v_n\in \mathcal{U}_{\tau_n,M(r,\tau_n)},
\end{eqnarray}
\begin{eqnarray*}
\left\{
\begin{array}{ll}
\p_ty_n-\triangle y_n=\chi_\omega \chi_{(\tau_n,T)} u_n & \textrm{in } \Omega\times (0,T),\\
y_n=0 & \textrm{on } \partial\Omega\times(0,T),\\
y_n(0)=y_{0} & \textrm{in }\Omega,
\end{array}
\right.
\end{eqnarray*}
\begin{eqnarray*}
\left\{
\begin{array}{ll}
\p_tp_n+\triangle p_n=0 & \textrm{in } \Omega\times (0,T),\\
p_n=0 & \textrm{on } \partial\Omega\times(0,T),\\
p_n(T)=-(y_n(T)-z_d) & \textrm{in }\Omega.
\end{array}
\right.
\end{eqnarray*}
Besides, by the optimality of $y_n$  and (\ref{3.11})
(in Lemma~\ref{lemma3.1}), we see that
\begin{eqnarray}\label{3.21}
\|y_n(T)-z_d\|=r(M(r,\tau_n), \tau_n)=r\;\;\mbox{for all}\;\; n\in
\mathbb{N}.
\end{eqnarray}
Since $\tau_n\rightarrow\tau$
 and $\|u_n\|_{L^\infty(\tau_n,T;L^2(\Omega))}= M(r,\tau_n)\leq
M(r,\tau)-\delta$, there exist  a subsequence, still denoted in the
same way, and a control $\widetilde{u}\in L^\infty(0, T; L^2(\Omega))$ such that
\begin{eqnarray}\label{3.22}
\chi_{(\tau_n,T)}u_n\rightarrow
\chi_{(\tau,T)}\widetilde{u}\;\;\mbox{weakly star in}\;\;
L^\infty(0, T; L^2(\Omega)).
\end{eqnarray}
This, together with the equations satisfied by $y_n$ and $p_n$
respectively, indicates that
\begin{eqnarray}\label{3.23}
y_n\rightarrow \widetilde{y}\;\;\mbox{and}\;\; p_n\rightarrow
\widetilde{p}\;\;\mbox{in}\;\; C([0,T];L^2(\Omega)),
\end{eqnarray}
\begin{eqnarray}\label{3.24}
\left\{
\begin{array}{ll}
\p_t\widetilde{y}-\triangle \widetilde{y}=\chi_\omega \chi_{(\tau,T)} \widetilde{u} & \textrm{in } \Omega\times (0,T),\\
\widetilde{y}=0 & \textrm{on } \partial\Omega\times(0,T),\\
\widetilde{y}(0)=y_{0} & \textrm{in }\Omega
\end{array}
\right.
\end{eqnarray}
and
\begin{eqnarray}\label{3.25}
\left\{
\begin{array}{ll}
\p_t\widetilde{p}+\triangle \widetilde{p}=0 & \textrm{in } \Omega\times (0,T),\\
\widetilde{p}=0 & \textrm{on } \partial\Omega\times(0,T),\\
\widetilde{p}(T)=-(\widetilde{y}(T)-z_d) & \textrm{in }\Omega.
\end{array}
\right.
\end{eqnarray}
In addition,  it follows from  (\ref{3.21}) and (\ref{3.23})
that $\|\widetilde{y}(T)-z_d\|=r$. By making use of (\ref{3.11})
again, we deduce that
\begin{eqnarray}\label{3.26}
\|\widetilde{y}(T)-z_d\|=r(\tau,M(r,\tau)).
\end{eqnarray}

Now, we take a $v\in \mathcal{U}_{\tau,M(r,\tau)-\delta}$. Since
$M(r,\tau)-\delta>0$, it holds that
$$
\frac{M(r,\tau_n)}{M(r,\tau)-\delta}\chi_{(\tau_n,T)}v\in
\mathcal{U}_{\tau_n,M(r,\tau_n)}.
$$
Then, it follows from (\ref{3.20}) that
\begin{eqnarray*}
\int_{0}^T<\chi_\omega p_n, \chi_{(\tau_n,T)}u_n)>dt\geq
\int_{0}^T<\chi_\omega p_n,
\frac{M(r,\tau_n)}{M(r,\tau)-\delta}\chi_{(\tau_n,T)}v>dt.
\end{eqnarray*}
By (\ref{3.19}), (\ref{3.22}) and (\ref{3.23}), we can pass to the
limit in the above to get that
\begin{eqnarray*}
\int_{0}^T<\chi_\omega \widetilde{p}\;,
\chi_{(\tau,T)}\widetilde{u}>dt\geq \int_{0}^T<\chi_\omega
\widetilde{p}\; , \chi_{(\tau,T)}v >dt\;\;\mbox{for all}\;\; v\in
\mathcal{U}_{\tau, M(r,\tau)-\delta, }.
\end{eqnarray*}
This, along with the fact that $\widetilde{u}\in\mathcal{U}_{\tau,
M(r,\tau)-\delta, }$ (which follows from (\ref{3.22})), indicates
that
\begin{eqnarray*}
\int_{0}^T<\chi_\omega \widetilde{p}\; ,
\chi_{(\tau,T)}\widetilde{u}>dt =\displaystyle{\max_{v\in
\mathcal{U}_{\tau, M(r,\tau)-\delta, }}} \int_{0}^T<\chi_\omega
\widetilde{p}\; , \chi_{(\tau,T)}>dt.
\end{eqnarray*}
According to Lemma~\ref{lemma2.1}, the above equality, together with
 (\ref{3.24}) and (\ref{3.25}), shows that $\chi_{(\tau,T)}\widetilde{u}$ and
$\widetilde{y}$ are the optimal control and the optimal state to
$(OP)^{\tau, M(r,\tau)-\delta, }$.
 Therefore, it
stands that $ \|\widetilde{y}(T)-z_d\|=r(\tau, M(r,\tau)-\delta ),
$ which, combined with (\ref{3.26}), indicates that $
r(\tau,M(r,\tau) )=r(\tau, M(r,\tau)-\delta). $ This contradicts
with the strict monotonicity of the map $M\rightarrow r(\tau,M)$
(see Lemma~\ref{lemma3.1}).

\noindent{\it Step 3.   $T>\tau_1>\cdots>\tau_n\rightarrow \tau\geq
0\Rightarrow M(r,\tau_n)\rightarrow M(r,\tau)$. }

If this did not hold, then by the monotonicity of the map
$\tau\rightarrow M(r,\tau)$, we would have that
$
 \lim_{n\rightarrow\infty}M(r,\tau_n)=M(r,\tau)+\delta \;\;\mbox{for some}\;\; \delta>0.
 $
 Following the same argument as that in Step 2, we can derive that
 $
r(\tau,M(r,\tau))=r(\tau, M(r,\tau)+\delta). $ This contradicts to
the strict monotonicity of the map $M\rightarrow r(\tau,M)$.

\noindent{\it Step 4. $ \lim_{\tau\rightarrow T}M(r,\tau)=\infty. $}

Seeking for a contradiction, we suppose that
$
0<\tau_1<\cdots<\tau_n\rightarrow T$ and $
\lim_{n\rightarrow\infty} M(r,\tau_n)=M<\infty$. Let $u_n$ and
$y_n$ be the optimal control and state for $(NP)^{r,\tau_n}.$ Then
we would have that
$\chi_{(\tau_n,T)}u_n\rightarrow
0$ weakly star in $L^\infty(0,T;L^2(\Omega))$ and $y_n(\cdot)\rightarrow y(\cdot;0)$ in
$C([0,T];L^2(\Omega))$.
 Thus, it holds that $r_T\equiv\|y(T;0)-z_d\|=\lim_{n\rightarrow\infty}\|y_n(T)-z_d\|\leq
 r$, which contradicts to the assumption that $r<r_T$.

\noindent{\it Step 5. The proof of (\ref{3.27})}

By Lemma~\ref{lemma3.5}, the problem $(TP)^{M,r}$ has an optimal
control  $u$. It holds that
\begin{eqnarray}\label{3.29}
y(T; \chi_{(\tau(M,r),T)}u)\in B(z_d,r)\;\;\mbox{ and}\;\;
\|u\|_{L^\infty(\tau(M,r),T;L^2(\om))}\leq M.
\end{eqnarray}
From the first fact in (\ref{3.29}), we see that $u\in
\mathcal{U}_{r,\tau(M,r)}$. This, together with the optimality of
$M(r,\tau)$ and the second fact in (\ref{3.29}), shows that
\begin{eqnarray}\label{3.30}
M\geq M(r,\tau(M,r)).
\end{eqnarray}

 Seeking for a contradiction,  suppose that $M>
M(r,\tau(M,r))$. Since the map $\tau\rightarrow M(r,\tau)$ is
continuous and strictly monotonically increasing,  there would be a
 $\tau_1$, with $\tau_1\in (\tau(M,r), T)$, such that
$M(r,\tau_1)=M$. Clearly, the optimal control  $u_1$  to
$(NP)^{r,\tau_1}$ satisfies that
\begin{eqnarray}\label{3.31}
\|u_1\|_{L^\infty(\tau_1,T;
L^2(\om))}=M(r,\tau_1)=M\;\;\mbox{and}\;\; y(T;
\chi_{(\tau_1,T)}u_1)\in B(z_d,r).
\end{eqnarray}
From these, it follows that $u_1\in \mathcal{U}_{M,r}$. Then, by the
optimality of $\tau(M,r)$ , (\ref{1.2-1}) and (\ref{3.31}), we
deduce that $ \tau(M,r)\geq \widetilde{\tau}(u_1)\geq \tau_1, $
which contradicts with  that $\tau_1\in (\tau(M,r), T)$.

\noindent{\it Step 6. The proof of (\ref{3.28})}.

Let $\tau\in [0,T)$. By Step 1, it follows that $M(r,\tau)\geq
M(r,0)$. Then we can apply (\ref{3.27}) to deduce that
$M(r,\tau)=M(r,\tau(M(r,\tau),r))$. By making use of Step 1 again,
we obtain that $\tau=\tau(M(r,\tau),r)$.

In summary, we  complete the proof.
\end{proof}

\begin{Proposition}\label{theorem3.8} $(i)$
 Any optimal control to $(TP)^{M,r}$, where $r\in (0,r_T)$ and
 $M\geq M(r,0)$,  is the   optimal control to $(NP)^{r, \tau(M,r)}$.  $(ii)$ The optimal optimal control to
  $(NP)^{r, \tau}$, with
$\tau\in [0,T)$ and $r\in (0,r_T)$, is an optimal control to
$(TP)^{M(r,\tau),r}$. $(iii)$ For each   $r\in
(0,r_T)$ and each
 $M\geq M(r,0)$, $(TP)^{M,r}$ holds the bang-bang property (i.e., any optimal control  $u^*$ satisfies that $\|u^*(t)\|=M$ for a.e.
 $t\in (\tau(M,r), T)$)  and the optimal control to $(TP)^{M,r}$ is unique.

\end{Proposition}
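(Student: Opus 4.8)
The plan is to prove Proposition~\ref{theorem3.8} by exploiting the inverse relationship between the maps $\tau\mapsto M(r,\tau)$ and $M\mapsto\tau(M,r)$ established in Lemma~\ref{lemma3.6}, together with the optimality characterizations already available for $(NP)^{r,\tau}$ (Proposition~\ref{theorem3.3}) and for $(OP)^{\tau,M}$ (Lemmas~\ref{lemma2.1}--\ref{lemma2.6}).

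For part $(i)$, I would start with an optimal control $u^*$ to $(TP)^{M,r}$. By the definition of optimal time control, $u^* = \chi_{(\tau(M,r),T)}v^*$ for some $v^*\in\mathcal{U}_{\tau(M,r),M}$ with $y(T;\chi_{(\tau(M,r),T)}v^*)\in B(z_d,r)$; in particular $\|v^*\|_{L^\infty(\tau(M,r),T;L^2(\om))}\le M$. Hence $v^*\in\mathcal{U}_{r,\tau(M,r)}$, so by definition of the optimal norm, $M(r,\tau(M,r))\le\|v^*\|_{L^\infty(\tau(M,r),T;L^2(\om))}\le M$. By (\ref{3.27}) in Lemma~\ref{lemma3.6}, $M(r,\tau(M,r))=M$, so all these inequalities are equalities: $\|v^*\|_{L^\infty(\tau(M,r),T;L^2(\om))}=M(r,\tau(M,r))$. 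Since also $y(T;\chi_{(\tau(M,r),T)}v^*)\in B(z_d,r)$, the control $u^*$ meets the definition of an optimal control to $(NP)^{r,\tau(M,r)}$. For part $(ii)$, conversely, let $w^*=\chi_{(\tau,T)}v^*$ be the (unique, by Proposition~\ref{theorem3.3}$(iii)$) optimal control to $(NP)^{r,\tau}$; then $\|v^*\|_{L^\infty(\tau,T;L^2(\om))}=M(r,\tau)$ and $y(T;\chi_{(\tau,T)}v^*)\in B(z_d,r)$, so $w^*\in\mathcal{U}_{M(r,\tau),r}$ and $\widetilde{\tau}_{M(r,\tau),r}(w^*)\ge\tau$. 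On the other hand, by (\ref{3.28}) we have $\tau=\tau(M(r,\tau),r)=\sup\{\widetilde{\tau}_{M(r,\tau),r}(u):u\in\mathcal{U}_{M(r,\tau),r}\}$, so $\widetilde{\tau}_{M(r,\tau),r}(w^*)\le\tau$, forcing equality $\widetilde{\tau}_{M(r,\tau),r}(w^*)=\tau=\tau(M(r,\tau),r)$. Combined with $y(T;\chi_{(\tau,T)}v^*)\in B(z_d,r)$ and $v^*\in\mathcal{U}_{\tau(M(r,\tau),r),M(r,\tau)}=\mathcal{U}_{\tau,M(r,\tau)}$, this shows $w^*$ is an optimal control to $(TP)^{M(r,\tau),r}$.

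For part $(iii)$, I would chain the equivalences already in hand. Given $r\in(0,r_T)$ and $M\ge M(r,0)$, part $(i)$ says every optimal control to $(TP)^{M,r}$ is the optimal control to $(NP)^{r,\tau(M,r)}$; by Proposition~\ref{theorem3.3}$(ii)$ that is in turn the optimal control to $(OP)^{\tau(M,r),M(r,\tau(M,r))}=(OP)^{\tau(M,r),M}$, using (\ref{3.27}). Uniqueness of the optimal control to $(OP)^{\tau(M,r),M}$ (Lemma~\ref{lemma2.3}$(iii)$) then forces uniqueness of the optimal control to $(TP)^{M,r}$, and its bang-bang property $\|u^*(t)\|=M$ a.e.\ on $(\tau(M,r),T)$ is inherited from the bang-bang property of $(OP)^{\tau(M,r),M}$ in Lemma~\ref{lemma2.3}$(ii)$.

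I do not expect a serious obstacle here, since the heavy lifting—existence of optimal time controls, attainment of the supremum in (\ref{1.2-1}), and the mutual-inverse relation between $M(r,\cdot)$ and $\tau(\cdot,r)$—has already been done in Lemma~\ref{lemma3.5} and Lemma~\ref{lemma3.6}. The one point requiring a little care is bookkeeping the domain conditions: in part $(ii)$ one must check $M(r,\tau)\ge M(r,0)$ so that $(TP)^{M(r,\tau),r}$ is among the problems for which Lemma~\ref{lemma3.5} guarantees existence, but this is immediate from the monotonicity in Lemma~\ref{lemma3.6}. The only mild subtlety is ensuring in part $(i)$ that the optimal time control is genuinely supported on $(\tau(M,r),T)$ with the correct support set matching $(NP)^{r,\tau(M,r)}$, which follows from the definition of optimal time control and the identity (\ref{3.27}).
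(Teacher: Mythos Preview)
Your proposal is correct and follows essentially the same approach as the paper: parts $(i)$ and $(ii)$ are argued exactly as in the paper via the identities (\ref{3.27}) and (\ref{3.28}) from Lemma~\ref{lemma3.6}, and for part $(iii)$ the paper appeals directly to Proposition~\ref{theorem3.3}$(iii)$ (bang-bang and uniqueness for $(NP)^{r,\tau(M,r)}$) rather than detouring through $(OP)^{\tau(M,r),M}$ and Lemma~\ref{lemma2.3}, but since Proposition~\ref{theorem3.3}$(iii)$ is itself derived from Lemma~\ref{lemma2.3} this is only a cosmetic difference.
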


\begin{proof}
$(i)$ An optimal control  $u$ to $(TP)^{M,r}$ satisfies that $u=0$ over $(\tau(M,r),T)$,
$$y(T; \chi_{(\tau(M,r),T)}u)\in B(z_d,r)\;\mbox{ and }\;
\|u\|_{L^\infty(\tau(M,r),T; L^2(\om))}\leq M.$$
These, together
with (\ref{3.27}), yields that  $u$ is the optimal control to
$(NP)^{r, \tau(M,r)}$. $(ii)$ The optimal control  $v$  to
$(NP)^{r,\tau}$ satisfies that $v=0$ over $(\tau,T)$,
$\|u\|_{L^\infty(\tau,T;L^2(\om))}=M(r,\tau)$ and $y(T;
\chi_{(\tau,T)}u)\in B(z_d,r)$. These, together with (\ref{3.28}),
yields that
 $u$ is an optimal control to $(TP)^{M(r,\tau),r}$.
 $(iii)$ The bang-bang property  and the
 uniqueness of  $(TP)^{M,r}$ follow from $(iii)$ of  Proposition~\ref{theorem3.3} and $(i)$ above.
 This
completes the proof.
\end{proof}

\subsection{Equivalence of optimal target and time  control problems}

$\;\;\;\;$Though the equivalence between optimal target and time
control problems can be derived from Proposition~\ref{theorem3.3}
and
     Proposition~\ref{theorem3.8}, the properties of maps
     $\tau\rightarrow r(\tau,M)$ and $r\rightarrow \tau(M,r)$ are
     independently interesting and will be used in the next section.
     This is why  we introduce what follows.
\begin{Lemma}\label{lemma2.17}
Let $M>0$. Then the map $\tau\rightarrow r(\tau,M)$ is strictly
monotonically increasing and continuous from $[0,T)$ onto $[r(0,M),
r_T)$. Furthermore, it holds that
\begin{eqnarray}\label{3.41}
r=r(\tau(M,r),M)\;\;\mbox{for each}\;\; r\in [r(0,M), r_T),
\end{eqnarray}
\begin{eqnarray}\label{3.42}
\tau=\tau(M,r(\tau,M))\;\;\mbox{for each}\;\; \tau\in [0,T).
\end{eqnarray}
Consequently, the maps $\tau\rightarrow r(\tau,M)$   and
$r\rightarrow\tau(M,r)$  are the inverse of each other.
\end{Lemma}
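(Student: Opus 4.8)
The plan is to leverage the two inversion results already in hand, namely Lemma~\ref{lemma3.1} (the maps $M\mapsto r(\tau,M)$ and $r\mapsto M(r,\tau)$ are mutually inverse, for fixed $\tau$) and Lemma~\ref{lemma3.6} (the maps $\tau\mapsto M(r,\tau)$ and $M\mapsto \tau(M,r)$ are mutually inverse, for fixed $r$), and to deduce the claimed properties of $\tau\mapsto r(\tau,M)$ by composing these two correspondences. Concretely, for fixed $M>0$ one expects the identity
\[
r(\tau,M)=\bigl(\tau\mapsto M(\cdot,\cdot)\bigr)^{-1}\text{-type composition},
\]
that is, $r(\tau,M)$ should be characterized as the unique $r$ for which $M(r,\tau)=M$. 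I would first establish this characterization: by Lemma~\ref{lemma3.1}, $M=M(r(\tau,M),\tau)$ (this is exactly (\ref{3.12})), so $r=r(\tau,M)$ does solve $M(r,\tau)=M$; uniqueness of the solution follows from the strict monotonicity of $r\mapsto M(r,\tau)$ in Lemma~\ref{lemma3.1}. This reduces everything about $\tau\mapsto r(\tau,M)$ to known facts about $r\mapsto M(r,\tau)$ and $\tau\mapsto M(r,\tau)$.

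Granting that characterization, the monotonicity is immediate: fix $M>0$ and let $\tau_1<\tau_2$. Write $r_i=r(\tau_i,M)$, so $M(r_i,\tau_i)=M$. Since $\tau\mapsto M(r,\tau)$ is strictly increasing (Lemma~\ref{lemma3.6}), if we had $r_1\ge r_2$ then, combined with strict monotonicity of $r\mapsto M(r,\tau)$... — here one has to be a little careful about the direction, since $r\mapsto M(r,\tau)$ is strictly \emph{decreasing} by Lemma~\ref{lemma3.1} — a short monotonicity chase gives $M=M(r_1,\tau_1)<M(r_1,\tau_2)\le M(r_2,\tau_2)=M$, a contradiction, hence $r_1<r_2$. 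For continuity I would argue by the standard monotone-function-with-no-jumps route: a monotone function has at most countably many jump discontinuities, and a jump of $\tau\mapsto r(\tau,M)$ would, via the continuous inverse-type relation with the continuous map $\tau\mapsto M(r,\tau)$, force a contradiction — alternatively, continuity of $\tau\mapsto r(\tau,M)$ follows because its "inverse in $r$" at each $\tau$ (the map $r\mapsto M(r,\tau)$) is continuous and strictly monotone, so it is a homeomorphism onto its image, and then one invokes joint continuity considerations; I expect the cleanest route is to mimic Steps 2–3 of the proof of Lemma~\ref{lemma3.6}, i.e. take $\tau_n\to\tau$, use that $r(\tau_n,M)$ is bounded and monotone hence convergent to some $\bar r$, pass to the limit in the optimality system (\ref{2.9}) for $(OP)^{\tau_n,M}$ to identify $\bar r$ as $\|\varphi^{\tau,M}(T)-z_d\|=r(\tau,M)$.

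For the range statement: at $\tau=0$ one has $r(0,M)$ by definition; as $\tau\uparrow T$ the control horizon $(\tau,T)$ shrinks, so the optimal state tends to $y(T;0)$ and hence $r(\tau,M)\uparrow r_T$, but the value $r_T$ is never attained since for $\tau<T$ one still has a nontrivial control acting (and $r(\tau,M)<r_T$ by strict monotonicity together with $r(0,M)<r_T$, which holds because $M>0$). By strict monotonicity and continuity (intermediate value theorem) the image is exactly $[r(0,M),r_T)$. Finally, (\ref{3.41}) and (\ref{3.42}) are just the two halves of "mutually inverse": (\ref{3.42}) reads $\tau=\tau(M,r(\tau,M))$, which is the composition $\tau\mapsto M(r(\tau,M),\tau)=M$ fed into $M\mapsto\tau(M,r)$ — but this requires also using (\ref{3.28}) and the relation between $\tau(M,r)$ and $M(r,\tau)$ from Lemma~\ref{lemma3.6}; and (\ref{3.41}) is obtained symmetrically.

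The main obstacle I anticipate is the continuity proof. Monotonicity and the endpoint/range analysis are essentially formal consequences of Lemmas~\ref{lemma3.1} and~\ref{lemma3.6}, but proving no jumps occur requires either a careful compactness-and-limit argument in the PDE optimality system (as in Lemma~\ref{lemma3.6}, Steps 2–3) or a clean abstract lemma to the effect that if $f(r,\tau)$ is jointly such that $r\mapsto f(r,\tau)$ and $\tau\mapsto f(r,\tau)$ are each continuous and strictly monotone, then the level-set parametrization $\tau\mapsto r(\tau)$ defined by $f(r(\tau),\tau)=\text{const}$ is continuous. I would state and use such a lemma, or simply reference the parallel argument already carried out in the proof of Lemma~\ref{lemma3.6}, to keep the write-up short.
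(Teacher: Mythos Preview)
Your proposal is correct and follows essentially the paper's strategy: characterize $r(\tau,M)$ implicitly via (\ref{3.12}) as the unique $r$ with $M(r,\tau)=M$, derive strict monotonicity by the monotonicity chase you outline (this is exactly the paper's Step~1), handle the endpoint $\tau\to T$ directly as you describe (Step~3), and obtain (\ref{3.41})--(\ref{3.42}) by composing the inverse relations from Lemmas~\ref{lemma3.1} and~\ref{lemma3.6} (Step~4, via the set identity $\mathcal{A}_1=\mathcal{A}_2$). The one point worth flagging is continuity: you favor the PDE-limit route (mimicking Lemma~\ref{lemma3.6}), but the paper actually takes the shorter abstract route you list only as an alternative --- it first observes that separate continuity plus monotonicity gives joint continuity of $(r,\tau)\mapsto M(r,\tau)$, then passes to the limit in $M(r(\tau_n,M),\tau_n)=M$ and identifies $\lim_n r(\tau_n,M)=r(\tau,M)$ by strict monotonicity of $r\mapsto M(r,\tau)$. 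So your ``clean abstract lemma'' is in fact the paper's argument, and using it spares you from redoing any compactness work in the optimality system.
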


\begin{proof}
We carry out the proof by several steps as follows:

\noindent{\it Step 1.  The map $\tau\rightarrow r(\tau,M)$  is strictly
monotonically increasing. }

Let $0\leq \tau_1<\tau_2<T$. It follows from (\ref{3.12}) that
\begin{eqnarray}\label{3.42-1}
M(r(\tau_1,M),\tau_1)=M(r(\tau_2,M),\tau_2).
\end{eqnarray}
We first  claim that
\begin{eqnarray}\label{3.42-1-1}
r(\tau_2,M)\in (0, r_T)\;\; \mbox{when}\;\; M>0.
\end{eqnarray}
In fact, on one hand, it is clear that $r(\tau_2,M)>0$ (see
Lemma~\ref{lemma2.1}). On the other hand, since the map
$M\rightarrow r(\tau_2,M)$ is strictly monotonically decreasing (see
Lemma~\ref{lemma3.1}), it holds that
$r(\tau_2,M)<r(\tau_2,0)=\|y(T;0)-z_d\|=r_T$. Then by
(\ref{3.42-1-1}),  we can apply Lemma~\ref{lemma3.6} to get that
$M(r(\tau_2,M),\tau_2)>M(r(\tau_2,M),\tau_1)$. This, together with
(\ref{3.42-1}), yields that
\begin{eqnarray}\label{3.24-2}
M(r(\tau_1,M),\tau_1)>M(r(\tau_2,M),\tau_1).
\end{eqnarray}
Since the map $r\rightarrow M(r,\tau_1)$ is strictly monotonically
decreasing (see Lemma~\ref{lemma3.1}), it follows from
(\ref{3.24-2}) that $r(\tau_1,M)<r(\tau_2,M)$.

\noindent{\it Step 2.  The map $\tau\rightarrow r(\tau,M)$   is continuous.}

Since for each $\tau\in [0,T)$, the map $r\rightarrow M(r,\tau)$ is
continuous and monotonic over $(0,r_T)$ (see Lemma~\ref{lemma3.1}),
and for each $r\in (0,r_T)$, the map $\tau\rightarrow M(r,\tau)$ is
continuous (and monotonic) over $[0,T)$ (see Lemma~\ref{lemma3.6}),
it follows that
\begin{eqnarray}\label{3.24-3}
\mbox{the map}\;\; (r,\tau)\rightarrow M(r,\tau)\;\; \mbox{is
continuous over}\;\; (0,r_T)\times [0,T).
\end{eqnarray}
Now we prove that the map $\tau\rightarrow r(\tau,M)$ is continuous
from left. For this purpose, we let $0\leq
\tau_1<\tau_2<\cdots<\tau_n\rightarrow \tau<T$. Then by the
monotonicity of $\{\tau_n\}$, $\lim_{n\rightarrow\infty}r(\tau_n,M)$
exists. Thus, it follows from (\ref{3.24-3}) that
$$
\lim_{n\rightarrow\infty}M(r(\tau_n,M),\tau_n)=M(\lim_{n\rightarrow\infty}r(\tau_n,M),
\tau).
$$
On the other hand, by (\ref{3.12}), it stands that
$$
M(r(\tau_n,M),\tau_n)=M=M(r(\tau,M),\tau)\;\;\mbox{for all}\;\;n.
$$
These yield that $M(\lim_{n\rightarrow\infty}r(\tau_n,M),
\tau)=M(r(\tau,M),\tau)$. This, together with the strict
monotonicity of the map $r\rightarrow M(r,\tau)$ (see
Lemma~\ref{lemma3.1}), indicates that
$\lim_{n\rightarrow\infty}r(\tau_n,M)=r(\tau,M)$. Thus, the map
$\tau\rightarrow r(\tau,M)$ is continuous from left. Similarly, we
can prove that it is continuous from right.

\noindent{\it Step 3. It holds that $\lim_{\tau\rightarrow T}r(\tau,M)=r_T$.}

Clearly, the optimal control  $u_\tau$ to $(OP)^{\tau,M}$ satisfies
 that
$\|y(T; \chi_{(\tau,T)}u_\tau)-z_d\|=r(\tau,M)$ and
$\|u_\tau\|_{L^\infty(\tau,T;L^2(\om))}\leq M.$ One can easily see
that $\chi_{(\tau,T)}u_\tau\rightarrow 0\;\;\mbox{in}\;\;
L^\infty(0,T;L^2(\om))$ as $\tau$ tends to $T$, from which, it
follows that $y(T;\chi_{(0,T)}u_\tau)\rightarrow y(T;0)$  as $\tau$
tends to $T$. Therefore, it holds that
$
r_T\equiv\|y(T;0)-z_d\|=\lim_{\tau\rightarrow
T}\|y(T;\chi_{(0,T)}u_\tau)-z_d\|=\lim_{\tau\rightarrow T}r(\tau,M).
$

\noindent{\it Step 4. The proof of (\ref{3.41}) and (\ref{3.42}).}

We start with proving the following:
\begin{eqnarray}\label{3.43}
\mathcal{A}_1=\mathcal{A}_2,
\end{eqnarray}
where $\mathcal{A}_1=\{ (M,r) : r\in (0,r_T), M\geq M(r,0)\}$ and
$\mathcal{A}_2=\{(M,r) : M>0, r\in[r(0,M),r_T)\}.
$
In fact, if  $(M,r)\in \mathcal{A}_1$,  since  $r>0$, it follows
that $M>0$. On the other hand, because  $M\geq M(r,0)$,  we can
apply Lemma~\ref{lemma3.1} to get that $ r(0,M)\geq r(0,M(r,0))=r. $
Thus, it stands that $(M,r)\in \mathcal{A}_2$.  Similarly, we can
prove that $\mathcal{A}_2\subset\mathcal{A}_1$.

Next, it follows from  (\ref{3.43}) and (\ref{3.27}) that $
M=M(r,\tau(M,r))$ when $ M>0$ and $r\in [r(0,M), r_T)$. This,
together with (\ref{3.11}), indicates that $$
r(\tau(M,r),M)=r(\tau(M,r),M(r,\tau(M,r)))=r\;\;\mbox{for each}\; r\in [r(0,M), r_T), $$ which leads to
(\ref{3.41}).

Finally, because $r(\tau,M)\in (0, r_T)$ (see (\ref{3.42-1-1})), we
can make use of (\ref{3.28}) to get that $ \tau(M(r(\tau,M),\tau),
r(\tau,M))=\tau, $ which, along with (\ref{3.12}), gives
(\ref{3.42}).

In summary, we  complete the proof.
\end{proof}

\begin{Proposition}\label{proposition2.19} The optimal control to  $(TP)^{M,r}$, where
$M>0$ and $r\in [r(0,M),r_T)$,   is  the optimal control to
$(OP)^{\tau(M,r),M}$. Conversely, the optimal control to
$(OP)^{\tau,M}$, where $M>0$ and $\tau\in [0,T)$, is the optimal
control to $(TP)^{M,r(\tau,M)}$.
\end{Proposition}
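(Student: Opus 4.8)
The plan is to reduce Proposition~\ref{proposition2.19} to the results already established about the pairwise equivalences $(OP)\leftrightarrow(NP)$ (Proposition~\ref{theorem3.3}) and $(NP)\leftrightarrow(TP)$ (Proposition~\ref{theorem3.8}), together with the inverse-function identities (\ref{3.41})--(\ref{3.42}) of Lemma~\ref{lemma2.17}. The key bookkeeping is to keep track of the parameters $r$, $M$, $\tau$ and to make sure the hypotheses of those earlier results are met; this is where the set identity (\ref{3.43}) and the range statement $r(\tau,M)\in(0,r_T)$ from (\ref{3.42-1-1}) do the real work.

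For the first assertion, let $M>0$ and $r\in[r(0,M),r_T)$, and let $u^*$ be the (unique) optimal control to $(TP)^{M,r}$. By (\ref{3.43}), $(M,r)\in\mathcal{A}_1$, so $r\in(0,r_T)$ and $M\ge M(r,0)$; hence Proposition~\ref{theorem3.8}$(i)$ applies and tells us that $u^*$ is the optimal control to $(NP)^{r,\tau(M,r)}$. Now set $\tau:=\tau(M,r)$; by (\ref{3.41}) we have $r=r(\tau,M)$, i.e. $r(\tau,M(r,\tau))=r$ in the form (\ref{3.11}) (using also (\ref{3.27}) via (\ref{3.43}) to know $M(r,\tau)=M$). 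Then Proposition~\ref{theorem3.3}$(ii)$ says that the optimal control to $(NP)^{r,\tau}$ is the optimal control to $(OP)^{\tau,M(r,\tau)}=(OP)^{\tau(M,r),M}$. Chaining these two facts gives that $u^*$ is the optimal control to $(OP)^{\tau(M,r),M}$, as desired.

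For the converse, let $M>0$ and $\tau\in[0,T)$, and let $v^*$ be the optimal control to $(OP)^{\tau,M}$. By Proposition~\ref{theorem3.3}$(i)$, $v^*$ is an optimal control to $(NP)^{r(\tau,M),\tau}$; write $r:=r(\tau,M)$, which lies in $(0,r_T)$ by (\ref{3.42-1-1}). Now I want to invoke Proposition~\ref{theorem3.8}$(ii)$ to pass from $(NP)^{r,\tau}$ to $(TP)^{M(r,\tau),r}$; since $r=r(\tau,M)$ we have $M(r,\tau)=M(r(\tau,M),\tau)=M$ by (\ref{3.12}), so $(TP)^{M(r,\tau),r}=(TP)^{M,r(\tau,M)}$, and Proposition~\ref{theorem3.8}$(ii)$ gives that $v^*$ is an optimal control to $(TP)^{M,r(\tau,M)}$. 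Finally, by the uniqueness of the optimal control to $(TP)^{M,r(\tau,M)}$ (Proposition~\ref{theorem3.8}$(iii)$, whose hypotheses hold because $(M,r(\tau,M))\in\mathcal{A}_2=\mathcal{A}_1$), this optimal control is unique, so the phrase "the optimal control" is justified and the two directions are mutually consistent.

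The only genuine obstacle is verifying that the parameter ranges line up so that every earlier proposition is applicable: one must check $(M,r)\in\mathcal{A}_1\cap\mathcal{A}_2$ when starting from $(TP)$, and $r(\tau,M)\in(0,r_T)$ when starting from $(OP)$, and then repeatedly substitute using the identities $r=r(\tau,M(r,\tau))$, $M=M(r(\tau,M),\tau)$, $r=r(\tau(M,r),M)$ of Lemmas~\ref{lemma3.1} and \ref{lemma2.17}. Everything else is a direct concatenation of the already-proven equivalences, so the proof will be short.
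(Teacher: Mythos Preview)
Your proposal is correct and matches one of the two routes the paper explicitly names: the paper says the proposition ``is a consequence of Proposition~\ref{theorem3.3}, Proposition~\ref{theorem3.8} and (\ref{3.43})'' and omits the details, which is exactly the chain $(TP)\to(NP)\to(OP)$ and $(OP)\to(NP)\to(TP)$ you carry out, with the parameter bookkeeping via (\ref{3.12}), (\ref{3.27}), (\ref{3.41}) and (\ref{3.42-1-1}) done correctly.

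The paper also mentions a slightly more direct route, deriving the result from Lemma~\ref{lemma2.17} alone without passing through $(NP)$: if $u^*$ is optimal for $(TP)^{M,r}$ then $u^*=\chi_{(\tau(M,r),T)}v^*$ with $v^*\in\mathcal{U}_{\tau(M,r),M}$ and $\|y(T;u^*)-z_d\|\le r=r(\tau(M,r),M)$ by (\ref{3.41}), while the reverse inequality holds by definition of $r(\tau(M,r),M)$, so $u^*$ is optimal for $(OP)^{\tau(M,r),M}$; the converse is the symmetric argument using (\ref{3.42}). Your route buys you the result with no new computation beyond citing earlier propositions; the direct route avoids the detour through $(NP)$ but requires reopening the definitions. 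Both are short and the paper regards them as interchangeable.
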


This proposition can be directly derived from Lemma~\ref{lemma2.17}.
Also it is a consequence of Proposition~\ref{theorem3.3},
Proposition~\ref{theorem3.8} and (\ref{3.43}). We omit its proof.

\subsection{Proof of Theorem~\ref{theorem3.10}}

$\;\;\;\;$Let $(P_1)$ and $(P_2)$ be two optimal control problems.
By $(P_1)\Rightarrow (P_2)$, we mean that the optimal control to
$(P_1)$ is the optimal control to  $(P_2)$. The proof will be
carried out by several steps as follows:

\noindent{\it Step 1.  $(OP)^{\tau,M}\Rightarrow (TP)^{M,r(\tau,M)}
\Rightarrow (NP)^{r(\tau,M),\tau}\Rightarrow(OP)^{\tau,M}$, $M>0$,
$\tau\in [0,T)$. }\\

\noindent $(OP)^{\tau,M}\Rightarrow (TP)^{M,r(\tau,M)}$: It follows from
Proposition~\ref{theorem3.3}.\\

\noindent $(TP)^{M,r(\tau,M)}
\Rightarrow (NP)^{r(\tau,M),\tau}$: We first claim that
\begin{eqnarray}\label{2.5.1}
r(\tau,M)\in (0,r_T)\;\; \mbox{when}\;\; M>0\;\;\mbox{and}\;\;
\tau\in [0,T).
\end{eqnarray}
In fact, it follows from  Lemma~\ref{lemma2.1} that $r(\tau,M)>0$.
On the other hand, since $M>0$ and the map $M\rightarrow r(\tau,M)$
is strictly monotonically decreasing (see Lemma~\ref{lemma3.1}), it
holds that $r(\tau,M)<r(0,\tau)=r_T$. These lead to (\ref{2.5.1}).

We next claim that
\begin{eqnarray}\label{2.5.2}
M\geq M(r(\tau,M),0)\;\; \mbox{when}\;\; M>0\;\;\mbox{and}\;\;
\tau\in [0,T).
\end{eqnarray}
Indeed, since the map $\tau\rightarrow r(\tau,M)$ is monotonically
increasing (see Lemma~\ref{lemma3.6}), it holds that $r(0,M)\leq
r(\tau,M)$. Because  the map  $r\rightarrow M(r,0)$ is monotonically
decreasing (see  Lemma~\ref{lemma3.1}), it stands that
$M(r(0,M),0)\geq M(r(\tau,M),0)$. This, combined with (\ref{3.12}),
shows (\ref{2.5.2}). Now, by (\ref{2.5.1}) and (\ref{2.5.2}), we can
apply Proposition~\ref{theorem3.8}, together with (\ref{3.28}), to
get $(TP)^{M,r(\tau,M)} \Rightarrow (NP)^{r(\tau,M),\tau}$.\\

\noindent $(NP)^{r(\tau,M),\tau}\Rightarrow(OP)^{\tau,M}$: By (\ref{2.5.1}), we can make use of
Proposition~\ref{theorem3.3}, together with (\ref{3.12}), to get
$(NP)^{r(\tau,M),\tau}\Rightarrow(OP)^{\tau,M}$.\\

\noindent{\it Step 2.  $(NP)^{r,\tau}\Rightarrow
(OP)^{\tau,M(r,\tau)}\Rightarrow (TP)^{M(r,\tau),r}\Rightarrow
(NP)^{r,\tau}$, $r\in (0,r_T)$, $\tau\in [0,T)$.  }\\

\noindent$(NP)^{r,\tau}\Rightarrow (OP)^{\tau,M(r,\tau)}$: It  follows from
Proposition~\ref{theorem3.3}.\\

\noindent $(OP)^{\tau,M(r,\tau)}\Rightarrow (TP)^{M(r,\tau),r}$: We
first claim that
\begin{eqnarray}\label{2.5.3}
M(r,\tau)>0\;\; \mbox{when}\;\; r\in (0,r_T)\;\;\mbox{and}\;\;
\tau\in [0,T).
\end{eqnarray}
In fact, since $r_T=\|y(T;0)-z_d\|$, it holds that $M(r_T,\tau)=0$.
On the other hand, since $r<r_T$ and the map $r\rightarrow
M(r,\tau)$ is strictly monotonically decreasing (see
Lemma~\ref{lemma3.1}), we see that $M(r,\tau)>M(r_T,\tau)$. Thus,
(\ref{2.5.3}) follows immediately. Now, by (\ref{2.5.3}), we can
apply Proposition~\ref{proposition2.19}, along with (\ref{3.11}), to
derive $(OP)^{\tau,M(r,\tau)}\Rightarrow (TP)^{M(r,\tau),r}$.\\

\noindent$(TP)^{M(r,\tau),r}\Rightarrow (NP)^{r,\tau}$: Since $r\in (0,r_T)$, the map $\tau\rightarrow M(r,\tau)$
is monotonically increasing (see Lemma~\ref{lemma3.6}). Thus, it
holds that $M(r,\tau)\geq M(r,0)$. Then we can make use of
Proposition~\ref{theorem3.8}, together with (\ref{3.28}), to yield
$(TP)^{M(r,\tau),r}\Rightarrow (NP)^{r,\tau}$.\\

\noindent{\it Step 3. $(TP)^{M,r}\Rightarrow (NP)^{r,\tau(M,r)}\Rightarrow
(OP)^{\tau(M,r),M}\Rightarrow (TP)^{M,r}$, $M>0$, $r\in
[r(0,M),r_T)$.}\\

\noindent  $(TP)^{M,r}\Rightarrow (NP)^{r,\tau(M,r)}$: It follows from (\ref{3.43}) and Proposition~\ref{theorem3.8}. \\

\noindent $(NP)^{r,\tau(M,r)}\Rightarrow (OP)^{\tau(M,r),M}$:
Since  $r>0$ in this case (see (\ref{3.43})), we can apply
Proposition~\ref{theorem3.3}, together with (\ref{3.12}), to get
$(NP)^{r,\tau(M,r)}\Rightarrow (OP)^{\tau(M,r),M}$.\\

\noindent $(OP)^{\tau(M,r),M}\Rightarrow
(TP)^{M,r}$: It follows from  Proposition~\ref{proposition2.19}, together with
(\ref{3.41}).\\

In summary, we  complete the proof of Theorem~\ref{theorem3.10}.
\endpf

\begin{Remark}\label{remark3.11}
{\rm  All results in this section hold for the case where the
controlled system is Equation (\ref{1.7}). In fact, these results
hold for the three kinds of optimal control problems studied in this
paper, when the adjoint equation of the controlled heat equation has
the unique continuation property (\ref{2.5}).

}
\end{Remark}

\section{Applications I: Algorithms for $M(r,\tau)$ and $\tau(M,r)$  }

$\;\;\;\;$ Throughout this section, we fix an initial state $y_0\in
L^2(\om)$ and write $r_T$ for  $r_T(y_0)$. For each $M>0$ and
$\tau\in [0,T)$, $(\varphi^{\tau,M},\psi^{\tau,M})$ denotes the
unique solution to the two-point boundary value problem (\ref{2.9})
and $\varphi^{\tau,M}$ (or $\psi^{\tau,M})$) stands for the first (or second) component of this
solution when it appears alone.

\begin{Proposition}\label{theorem3.1}
Let $\tau\in [0,T)$ and $r\in (0,r_T)$. Then $M^*$, $u^*$ and $y^*$
are the optimal norm, the optimal control and the optimal state to
$(NP)^{r,\tau}$ if and only if $M^*$, $u^*$ and $y^*$ satisfy that $M^*>0$,
\begin{eqnarray}\label{a3.1}
\|y^*(T)-z_d\|=r,
\end{eqnarray}
\begin{eqnarray}\label{a3.2}
u^*(t)=M^*\chi_{(\tau,T)}(t)\displaystyle\frac{\chi_\omega\psi^{\tau,
M^*}(t)}{\|\chi_\omega\psi^{\tau, M^*}(t)\|},\;\;t\in [\tau,T)
\end{eqnarray}
and
\begin{eqnarray}\label{a3.3}
y^*(t)=\varphi^{\tau, M^*}(t),\;\; t\in [0,T].
\end{eqnarray}
\end{Proposition}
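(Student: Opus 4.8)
The plan is to derive Proposition~\ref{theorem3.1} as a direct corollary of the equivalence between $(NP)^{r,\tau}$ and $(OP)^{\tau,M(r,\tau)}$ already established in Proposition~\ref{theorem3.3}, combined with the concrete characterization of the optimal control of $(OP)^{\tau,M}$ via the two-point boundary value problem (\ref{2.9}) in Lemma~\ref{lemma2.6}. First I would prove the forward implication: suppose $M^*$, $u^*$, $y^*$ are the optimal norm, control and state to $(NP)^{r,\tau}$. Since $r\in(0,r_T)$, (\ref{2.5.3}) (or Lemma~\ref{lemma3.1}, via $M(r_T,\tau)=0$ and strict monotonicity) gives $M^*=M(r,\tau)>0$. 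By $(ii)$ of Proposition~\ref{theorem3.3}, $u^*$ is then the optimal control to $(OP)^{\tau,M^*}$ and $y^*$ its optimal state. Applying Lemma~\ref{lemma2.6} with $M=M^*$, the optimal pair $(y^*,p^*)$ solves (\ref{2.9}), so by uniqueness of that solution $(y^*,p^*)=(\varphi^{\tau,M^*},\psi^{\tau,M^*})$; this yields (\ref{a3.3}), and feeding $\psi^{\tau,M^*}$ into (\ref{2.6}) of Lemma~\ref{lemma2.3} (equivalently (\ref{2.9-1})) gives (\ref{a3.2}). Finally, (\ref{3.11}) of Lemma~\ref{lemma3.1} gives $\|y^*(T)-z_d\|=r(\tau,M(r,\tau))=r$, which is (\ref{a3.1}).

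For the converse I would assume $M^*>0$ together with (\ref{a3.1})--(\ref{a3.3}) and show $M^*$, $u^*$, $y^*$ are optimal for $(NP)^{r,\tau}$. By Lemma~\ref{lemma2.6}, the control $u^{\tau,M^*}$ defined by (\ref{2.9-1}) — which by (\ref{a3.2}) equals $u^*$ — is the optimal control to $(OP)^{\tau,M^*}$ with optimal state $\varphi^{\tau,M^*}=y^*$; in particular $\|u^*\|_{L^\infty(\tau,T;L^2(\om))}=M^*$ by the bang-bang property ($(ii)$ of Lemma~\ref{lemma2.3}), and $\|y^*(T)-z_d\|=r(\tau,M^*)$ by (\ref{2.10}). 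Now (\ref{a3.1}) forces $r(\tau,M^*)=r$; since the map $M\mapsto r(\tau,M)$ is a strictly decreasing bijection onto $(0,r_T]$ with inverse $r\mapsto M(r,\tau)$ (Lemma~\ref{lemma3.1}), this gives $M^*=M(r,\tau)$. Then $(i)$ of Proposition~\ref{theorem3.3}, applied to the optimal control $u^*$ of $(OP)^{\tau,M^*}$, shows $u^*$ is an optimal control to $(NP)^{r(\tau,M^*),\tau}=(NP)^{r,\tau}$, with optimal norm $M(r,\tau)=M^*$ and optimal state $y^*$.

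I do not expect any serious obstacle: the proposition is essentially a bookkeeping assembly of Lemmas \ref{lemma2.3}, \ref{lemma2.6}, \ref{lemma3.1} and Proposition~\ref{theorem3.3}. The only points needing mild care are (a) checking that the hypothesis $M^*>0$ in the converse is genuinely used — it is needed so that $r(\tau,M^*)<r_T$, placing $r(\tau,M^*)$ in the range where the inverse relation $M^*=M(r,\tau)$ applies and guaranteeing $M(r,\tau)>0$ so that the bang-bang normalization in (\ref{a3.2}) is meaningful; and (b) ensuring the denominator $\|\chi_\omega\psi^{\tau,M^*}(t)\|$ in (\ref{a3.2}) is nonzero for $t\in[\tau,T)$, which is exactly the unique continuation fact (\ref{2.5}) since $\psi^{\tau,M^*}(T)=-(\varphi^{\tau,M^*}(T)-z_d)\ne0$ by $(ii)$ of Lemma~\ref{lemma2.1}. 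Having flagged these, the proof is a short chain of equivalences in each direction.
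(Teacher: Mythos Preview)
Your proposal is correct and follows essentially the same route as the paper: both directions hinge on Lemma~\ref{lemma2.6} to identify optimal data of $(OP)^{\tau,M^*}$ with the two-point boundary value solution, and on the bijection of Lemma~\ref{lemma3.1} (specifically (\ref{3.11}) and (\ref{3.12})) to match $r=r(\tau,M^*)$ with $M^*=M(r,\tau)$. The only cosmetic difference is that the paper invokes Theorem~\ref{theorem3.10} for the equivalence of optimal controls between $(NP)^{r,\tau}$ and $(OP)^{\tau,M(r,\tau)}$, whereas you cite the more specific Proposition~\ref{theorem3.3} directly; since the relevant part of Theorem~\ref{theorem3.10} is proved via Proposition~\ref{theorem3.3}, this is the same argument.
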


\begin{proof}
Suppose that  $M^*$, $u^*$ and $y^*$ are the optimal norm, the
optimal control and the optimal state to $(NP)^{r,\tau}$. Clearly,   $M^*=M(r,\tau)$.
It follows from Lemma~\ref{lemma3.1} that $M(r,\tau)>M(r_T,\tau)$. Hence,  $M^*>0$.
 Then, by Theorem~\ref{theorem3.10}, $u^*$ and
$y^*$ are the optimal control and the optimal state to
$(OP)^{\tau,M(r,\tau)}= (OP)^{\tau, M^*}$, respectively. On the other
hand, it follows from  Lemma~\ref{lemma2.6} that
$M^*\chi_{(\tau,T)}\displaystyle\frac{\chi_\omega\psi^{\tau,
M^*}}{\|\chi_\omega\psi^{\tau, M^*}\|}$ and $y^{\tau, M^*}$ are also
the optimal control and the optimal state to $(OP)^{\tau, M^*}$.
Then, by the uniqueness of the optimal control to this problem,
(\ref{a3.2}) and (\ref{a3.3}) follow at once. Besides, by the
optimality of $y^*$ to $(OP)^{\tau,M(r,\tau)}$, we see that
$\|y^*(T)-z_d\|=r(\tau,M(r,\tau))$. This, together with
(\ref{3.11}),
 gives (\ref{a3.1}).

Conversely, suppose that  a triplet $(M^*, u^*,y^*)$, with $M^*>0$, enjoys
(\ref{a3.1}), (\ref{a3.2}) and (\ref{a3.3}). According to
Lemma~\ref{lemma2.6}, it follows from (\ref{a3.2}) and (\ref{a3.3})
that $u^*$ and $y^*$ are  the optimal control and the optimal state
to $(OP)^{\tau, M^*}$ and that  $\|y^*(T)-z_d\|=r(M^*,\tau)$, which,
together with (\ref{a3.1}), shows that $r=r(M^*,\tau)$. Then, by
Theorem~\ref{theorem3.10}, $u^*$ and $y^*$ are  the optimal control
and the optimal state to $(NP)^{r(M^*,\tau),\tau}=(NP)^{r,\tau}$.
Hence,
$\|u^*\|_{L^\infty(\tau,T;L^2(\om))}=M(r,\tau)$, which, along with
(\ref{a3.2}), indicates that $M^*=M(r,\tau)$, i.e., $M^*$ is the
optimal norm to $(NP)^{r,\tau}$.  This completes the proof.
\end{proof}

By Theorem~\ref{theorem3.10}, Lemma~\ref{lemma2.6} and
Lemma~\ref{lemma2.17}, following a very similar argument used to prove
Proposition~\ref{theorem3.1}, we can verify the following property for
$(TP)^{M,r}$.

\begin{Proposition}\label{prop3.2}
Let $r\in (0,r_T)$ and $M\geq M(r,0)$. Then $\tau^*$, $u^*$ and
$y^*$ are the optimal time, the optimal control and the optimal
state to $(TP)^{M,r}$ if and only if $\tau^*$, $u^*$ and $y^*$ satisfy that $\tau^*\in [0,T)$,
\begin{eqnarray*}
\|y^*(T)-z_d\|=r,
\end{eqnarray*}
\begin{eqnarray*}
u^*(t)=M\chi_{(\tau^*,T)}(t)\displaystyle\frac{\chi_\omega\psi^{\tau^*,M}(t)}{\|\chi_\omega\psi^{\tau^*,
M}(t) \|},\;\;t\in (\tau^*,T)
\end{eqnarray*}
and
\begin{eqnarray*}
y^*(t)=\varphi^{\tau^*, M}(t),\;\; t\in [0,T].
\end{eqnarray*}
\end{Proposition}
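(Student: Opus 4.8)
The plan is to mirror the proof of Proposition~\ref{theorem3.1}, exploiting the equivalence $(TP)^{M,r}\Leftrightarrow (OP)^{\tau(M,r),M}$ furnished by Theorem~\ref{theorem3.10} (equivalently by Proposition~\ref{proposition2.19} together with Lemma~\ref{lemma2.17}), and then transporting the explicit characterization of the optimal target control from Lemma~\ref{lemma2.6}. First I would establish the hypotheses needed to invoke the equivalence: given $r\in(0,r_T)$ and $M\geq M(r,0)$, relation~(\ref{3.43}) (i.e. $\mathcal A_1=\mathcal A_2$) shows $(M,r)\in\mathcal A_2$, so $M>0$ and $r\in[r(0,M),r_T)$, which is exactly the range in which Lemma~\ref{lemma2.17} applies and in which $(TP)^{M,r}\Rightarrow(OP)^{\tau(M,r),M}$ holds.

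For the forward implication, suppose $\tau^*,u^*,y^*$ are the optimal time, control and state to $(TP)^{M,r}$. Then $\tau^*=\tau(M,r)\in[0,T)$ (finiteness from Lemma~\ref{lemma3.5}), and by Theorem~\ref{theorem3.10} the pair $(u^*,y^*)$ is the optimal control and state to $(OP)^{\tau^*,M}$. Lemma~\ref{lemma2.6} says the optimal control to $(OP)^{\tau^*,M}$ is the one built from the two-point boundary value problem (\ref{2.9}), namely $u^{\tau^*,M}(t)=M\chi_{(\tau^*,T)}(t)\chi_\omega\psi^{\tau^*,M}(t)/\|\chi_\omega\psi^{\tau^*,M}(t)\|$ with optimal state $\varphi^{\tau^*,M}$; uniqueness of the optimal control to $(OP)^{\tau^*,M}$ (Lemma~\ref{lemma2.3}(iii)) then forces the two displayed formulas for $u^*$ and $y^*$. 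Finally $\|y^*(T)-z_d\|=r(\tau^*,M)$ by optimality of $y^*$ for $(OP)^{\tau^*,M}$, and $r(\tau(M,r),M)=r$ by (\ref{3.41}), giving $\|y^*(T)-z_d\|=r$.

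For the converse, suppose $\tau^*\in[0,T)$ and a triple $(\tau^*,u^*,y^*)$ satisfies the three displayed relations. Lemma~\ref{lemma2.6} applied with $\tau=\tau^*$ identifies $u^*$ and $y^*$ as the optimal control and optimal state to $(OP)^{\tau^*,M}$, with $\|y^*(T)-z_d\|=r(\tau^*,M)$; combined with $\|y^*(T)-z_d\|=r$ this gives $r=r(\tau^*,M)$. Since $\tau^*\in[0,T)$ and $M>0$, (\ref{3.42}) yields $\tau^*=\tau(M,r(\tau^*,M))=\tau(M,r)$. Now Theorem~\ref{theorem3.10} (the equivalence $(OP)^{\tau(M,r),M}\Rightarrow(TP)^{M,r}$ in Step~3 of its proof, via Proposition~\ref{proposition2.19}) shows $u^*$ and $y^*$ are the optimal control and optimal state to $(TP)^{M,r}$, and $\tau^*=\tau(M,r)$ is by definition the optimal time.

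I do not anticipate a genuine obstacle here; the statement is essentially a transcription of Proposition~\ref{theorem3.1} with the roles of the norm parameter and the time parameter exchanged, and every ingredient is already in place. The one point that needs care is bookkeeping of the parameter ranges: one must verify at the outset that the hypotheses $r\in(0,r_T)$, $M\geq M(r,0)$ really do land one in the regime $M>0$, $r\in[r(0,M),r_T)$ where Lemma~\ref{lemma2.17}, Proposition~\ref{proposition2.19} and formulas (\ref{3.41})--(\ref{3.42}) are valid, so that the two-point problem (\ref{2.9}) with $\tau=\tau^*$ and the equivalence theorem may both be invoked legitimately. This is handled by (\ref{3.43}), as noted above.
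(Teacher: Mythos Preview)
Your proposal is correct and follows precisely the approach the paper indicates: the paper itself does not write out a proof but states that Proposition~\ref{prop3.2} follows ``by Theorem~\ref{theorem3.10}, Lemma~\ref{lemma2.6} and Lemma~\ref{lemma2.17}, following a very similar argument used to prove Proposition~\ref{theorem3.1}.'' You have faithfully carried out exactly this transcription, including the necessary parameter-range verification via (\ref{3.43}).
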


The above two propositions not only are independently interesting, but
also hint us to find two algorithms for the  optimal norm, together
with the optimal  control, to $(NP)^{r,\tau}$ and the optimal time,
along with the optimal  control, to $(TP)^{M,r}$, respectively.
First of all, we build up, corresponding to each $r\in (0,r_T)$ and
each $\tau\in (0,T)$, a sequence of numbers as follows:

\begin{itemize}
\item {\bf Structure of $\{M_n\}_{n=0}^\infty$:}  Let $M_0>0$ be arbitrarily taken.
Let $K\in \mathbb{N}$ be such that
 $$
K=\min\{  k : r( \tau,kM_0)<r, k=1,2,\cdots \}.
$$
( The existence of such a $K$ is guaranteed by Lemma~\ref{lemma3.1}.)
Set $a_0=0$ and $b_0= KM_0$.  Write
$M_1=\displaystyle\frac{a_0+b_0}{2}$.  In general, when
$M_n=\displaystyle\frac{a_{n-1}+b_{n-1}}{2}$ with $a_{n-1}$ and
$b_{n-1}$ being given,
 it is defined that
\begin{eqnarray*}
\{a_{n},b_{n}\}=
 \left\{
\begin{array}{ll}
\{M_{n}, b_{n-1}\}   & \;\mbox{if}\;\; r(\tau,M_{n})>r,\\
\{a_{n-1}, M_{n}\}   & \;\mbox{if}\;\; r(\tau,M_{n})\leq r
\end{array}
 \right.
\end{eqnarray*}
 and
$M_{n+1}=\displaystyle\frac{a_n+b_n}{2}$.

\end{itemize}

\begin{Remark}\label{remark3.3}{\rm
Let $\tau\in [0,T)$ and $r\in (0,r_T)$ be given. For each $M\geq 0$, we can
determine the value   $r(\tau,M)$ by solving  the two-point
boundary value problem (\ref{2.9}) corresponding to $(\tau, M)$, since  $r(\tau,M)=\|\varphi^{
\tau,M}(T)-z_d\|$ (see  Lemma~\ref{lemma2.6}).
Clearly,  $M_1$ is
determined by $K$. Since the map $M\rightarrow r(\tau,M)$ is strictly  monotonically decreasing and $r(\tau,M)$ tends to $0$ as $M$ goes to $\infty$ (see Lemma~\ref{lemma3.1}),
  $K$ can be determined by solving limited number of two-point
boundary value problems (\ref{2.9}) corresponding to $(\tau,M)$ with $M= kM_0$,
$k=1,2,\cdots,K$. On the other hand, when $n\geq 1$
$M_{n+1}$ is determined by $\varphi^{M_n,\tau}$, which can be solved
from (\ref{2.9}) corresponding to  $(\tau,M_n)$. In summary, we conclude that the
sequence $\{M_n\}_{n=0}^\infty$ can be
 solved from a series of two-point boundary value problems (\ref{2.9}) corresponding to  $(\tau, M)$, with
$M= kM_0$, $k=1,2,\cdots,K$ and with $M=M_n$, $n=1,2,\cdots$.}

\end{Remark}

\begin{Theorem}\label{proposition3.2}
Suppose that  $r\in(0,r_T)$ and $\tau\in [0,T)$. Let
$\{M_n\}_{n=0}^\infty$ be the sequence built up above. Let
$u_n=M_n\chi_{(\tau,T)}\displaystyle\frac{\chi_\omega\psi^{\tau,M_n}}{\|\chi_\omega
\psi^{\tau,M_n}\|}$ and $u^*$ be the optimal control  to
$(NP)^{r,\tau}$. Then it holds that
\begin{eqnarray}\label{a3.4}
M_n\rightarrow M(r,\tau)
\end{eqnarray}
and
\begin{eqnarray}\label{a3.6}
u_n\rightarrow u^* \;\;\mbox{in}\;\;L^2(\tau,T;L^2(\om))\;\;
\mbox{and in}\;\;
 C([\tau,T-\delta]; L^2(\om))\;\;\mbox{for each}\;\;\delta\in
 (0,T-\tau).
\end{eqnarray}
\end{Theorem}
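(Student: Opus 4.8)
The plan is to exploit the bisection structure of $\{M_n\}$ together with the monotonicity and continuity of $M\mapsto r(\tau,M)$ from Lemma~\ref{lemma3.1}. First I would verify by induction that $a_n\le M(r,\tau)\le b_n$ for every $n$: the initial step uses $r(\tau,0)=r_T>r$ (so $M(r,\tau)>0=a_0$) and $r(\tau,KM_0)<r$ together with the strict monotonicity of $r\mapsto M(r,\tau)$ (the inverse of $M\mapsto r(\tau,M)$, by Lemma~\ref{lemma3.1}) to get $M(r,\tau)<KM_0=b_0$; the inductive step just checks that whichever branch of the definition is taken preserves the bracketing, again using that $r(\tau,M_n)>r \iff M_n<M(r,\tau)$. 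Since $b_n-a_n=2^{-n}(b_0-a_0)\to 0$ and $M_{n+1}=(a_n+b_n)/2$, this yields \eqref{a3.4}, i.e. $M_n\to M(r,\tau)$.

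Next I would upgrade the convergence $M_n\to M(r,\tau)$ to convergence of the controls $u_n$. Writing $M^*=M(r,\tau)$, the natural route is to pass to the limit in the two-point boundary value problem \eqref{2.9}: since $\|u_n\|_{L^\infty(\tau,T;L^2)}=M_n$ is bounded, extract a subsequence with $\chi_{(\tau,T)}u_n \to \chi_{(\tau,T)}v$ weakly-$*$ in $L^\infty(0,T;L^2(\om))$, deduce $\varphi^{\tau,M_n}\to\varphi$ and $\psi^{\tau,M_n}\to\psi$ in $C([0,T];L^2(\om))$ (continuous dependence of the forward and adjoint heat equations on the data), and use the unique continuation estimate \eqref{2.5}, which gives $\|\chi_\omega\psi^{\tau,M_n}(t)\|\ge c>0$ uniformly on compact subsets of $[0,T)$, to pass to the limit in $u_n(t)=M_n\chi_{(\tau,T)}(t)\chi_\omega\psi^{\tau,M_n}(t)/\|\chi_\omega\psi^{\tau,M_n}(t)\|$ pointwise and in $C([\tau,T-\delta];L^2(\om))$. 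The limit $(\varphi,\psi)$ then solves \eqref{2.9} with parameter $M^*$, hence by the uniqueness in Lemma~\ref{lemma2.6} equals $(\varphi^{\tau,M^*},\psi^{\tau,M^*})$; by Lemma~\ref{lemma2.6} the limit control is the optimal control to $(OP)^{\tau,M^*}$, and by Theorem~\ref{theorem3.10} (or Proposition~\ref{theorem3.3} together with \eqref{3.12} and \eqref{3.11}, noting $r(\tau,M^*)=r$ from Lemma~\ref{lemma3.1}) it is the optimal control $u^*$ to $(NP)^{r,\tau}$. Since the limit is the same for every subsequence, the whole sequence converges.

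For the $L^2(\tau,T;L^2(\om))$ convergence in \eqref{a3.6}, the $C([\tau,T-\delta];L^2)$ convergence handles the interval $(\tau,T-\delta)$, and on the remaining short interval $(T-\delta,T)$ one controls $\int_{T-\delta}^T\|u_n(t)-u^*(t)\|^2\,dt\le (M_n+M^*)^2\delta\le C\delta$ using only the uniform bound $\|u_n(t)\|\le M_n\le C$ and $\|u^*(t)\|\le M^*$; letting first $n\to\infty$ and then $\delta\to 0$ gives $u_n\to u^*$ in $L^2(\tau,T;L^2(\om))$.

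The main obstacle is the passage to the limit in the nonlinear normalization $\chi_\omega\psi^{\tau,M_n}/\|\chi_\omega\psi^{\tau,M_n}\|$: one must ensure the denominators do not degenerate along the sequence, which is exactly where the unique continuation property \eqref{2.5} is essential, and one must confirm that the limiting pair genuinely solves the coupled system \eqref{2.9} at parameter $M^*$ so that Lemma~\ref{lemma2.6}'s uniqueness can be invoked to identify the limit and kill the subsequence. A secondary technical point is that $\|\chi_\omega\psi^{\tau,M_n}(t)\|$ is bounded below uniformly in $n$ on $[\tau,T-\delta]$: this follows because $\psi^{\tau,M_n}\to\psi^{\tau,M^*}$ uniformly and $\inf_{t\in[\tau,T-\delta]}\|\chi_\omega\psi^{\tau,M^*}(t)\|>0$ by \eqref{2.5} and continuity, so for $n$ large the $\psi^{\tau,M_n}$ inherit a common positive lower bound.
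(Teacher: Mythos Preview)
Your proposal is correct and follows essentially the same strategy as the paper: bisection plus the monotonicity/continuity of $M\mapsto r(\tau,M)$ for \eqref{a3.4}, then a subsequence/compactness argument to identify the limit control with $u^*$, and finally the pointwise estimate on $u_n-u^*$ via the adjoint lower bound for the $C([\tau,T-\delta];L^2(\om))$ convergence.

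There are two minor differences worth noting. First, to identify the weak-$*$ limit the paper does not pass to the limit in the coupled system \eqref{2.9} directly; it simply observes that the limit $\tilde u$ satisfies $\|\tilde u\|_{L^\infty}\le M(r,\tau)$ and $\|\tilde y(T)-z_d\|=r(\tau,M(r,\tau))$, so $\tilde u$ is optimal for $(OP)^{\tau,M(r,\tau)}$, and then invokes uniqueness of that optimal control. This avoids needing the adjoint lower bound at the identification stage (you only need it later for the $C$ convergence), so your ``secondary technical point'' is not required to identify the limit --- only to upgrade to uniform convergence. Second, for the $L^2(\tau,T;L^2(\om))$ convergence the paper uses the cleaner Hilbert-space trick ``weak convergence plus norm convergence implies strong convergence'': since $\|u_n(t)\|=M_n$ and $\|u^*(t)\|=M(r,\tau)$ a.e., one has $\|u_n\|_{L^2}\to\|u^*\|_{L^2}$ directly from $M_n\to M(r,\tau)$, and this combines with the already-established weak convergence. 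Your interval-splitting argument is also valid but takes a slightly longer route, since it requires the $C([\tau,T-\delta])$ convergence as input.
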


\begin{proof} For simplicity, we write $(\varphi_n,\psi_n)$ for the
solution $(\varphi^{\tau,M_n},\psi^{\tau,M_n})$ with $n=1,2,\cdots$.
We start with proving (\ref{a3.4}). From the structure of $\{M_n\}$,
it follows  that
 $M_{n}\in [a_{n}, b_{n}]\subset [a_{n-1},b_{n-1}]$
and $b_n-a_n=\displaystyle\frac{b_{n-1}-a_{n-1}}{2}$. Thus, it holds
that
$\lim_{n\rightarrow\infty}a_n=\lim_{n\rightarrow\infty}b_n=\lim_{n\rightarrow\infty}M_n$.
Since the map $M\rightarrow r(\tau,M)$ is continuous (see
Lemma~\ref{lemma3.1}) and   $r(\tau,a_n)>r\geq r(\tau,b_n)$ (which
follows also from the structure of $\{M_n\}$), we find that
$r(\tau,\lim_{n\rightarrow\infty}M_n)=r$. This, along with
(\ref{3.11}), indicates that
\begin{eqnarray}\label{a3.7}
r(\tau,\lim_{n\rightarrow\infty}M_n)=r(\tau,M(r,\tau)).
\end{eqnarray}
 Then, (\ref{a3.4}) follows from (\ref{a3.7}) and  the strict
monotonicity of the map $M\rightarrow r(\tau,M)$ (see
Lemma~\ref{lemma3.1}).

Next, write  $y^*(\cdot)$ and $y_n(\cdot)$ for the solutions
$y(\cdot;\chi_{(\tau,T)}u^*)$ and  $y(\cdot; \chi_{(\tau,T)}u_n)$,
respectively.
  We claim that
\begin{eqnarray}\label{a3.7-1}
u_n\rightarrow u^*\;\;\mbox{weakly star in}\;\;
L^\infty(\tau,T;L^2(\om))\;\; \mbox{and}\;\; y_n\rightarrow
y^*\;\;\mbox{in}\;\; C([0,T];L^2(\om)).
\end{eqnarray}
 In fact, by the
definitions of $u_n$ and $y_n$, it follows from Lemma~\ref{lemma2.6}
that they are the optimal control and the optimal state to
$(OP)^{\tau,M_n}$, respectively. We arbitrarily take subsequences of
$\{u_n\}$ and $\{y_n\}$,  denoted by $\{u_{n_k}'\}$
  and   $\{y_{n_k}'\}$, respectively.  Clearly, there are subsequences $\{u_{n_k}\}$ of $\{u_{n_k}'\}$
and  $\{y_{n_k}\}$ of  $\{y_{n_k}'\}$ such that
\begin{eqnarray}\label{a3.8}
u_{n_k}\rightarrow \widetilde{u}\;\;\mbox{weakly star in }\;
L^\infty(\tau,T;L^2(\om))\;\;\mbox{and}\;\; y_{n_k}\rightarrow
\widetilde{y}\;\; \mbox{in}\;\;C([0,T];L^2(\om)),
\end{eqnarray}
where $\widetilde{y}(\cdot)=y(\cdot; \chi_{(\tau,T)}\widetilde{u})$.
 These, along with (\ref{a3.4}) and
 (\ref{a3.7}), indicate that
$$
\|\widetilde{u}\|_{L^\infty(\tau,T;L^2(\om))}\leq
\liminf_{k\rightarrow\infty}\|u_{n_k}\|_{L^\infty(\tau,T;L^2(\om))}=
\liminf_{k\rightarrow\infty}M_{n_k}=M(r,\tau)
$$
and
$$
\|\widetilde{y}(T)-z_d\|=\lim_{k\rightarrow\infty}\|y_{n_k}(T)-z_d\|=\lim_{n\rightarrow\infty}r(\tau,M_{n_k})=r(\tau,\lim_{k\rightarrow\infty}M_{n_k})=r(\tau,M(r,\tau)).
$$
From these, we see that  $\widetilde{u}$ and $\widetilde{y}$ are the
optimal control and the optimal state to $(OP)^{\tau,M(r,\tau)}$.
Then, according to Theorem~\ref{theorem3.10}, they are the optimal control and the optimal state to $(NP)^{r,\tau}$.
Since the optimal control to $(NP)^{r,\tau}$ is unique,
(\ref{a3.7-1}) follows from (\ref{a3.8}).

Now we verify the first convergence in (\ref{a3.6}). By  the first
convergence in (\ref{a3.7-1}), we see  that
\begin{eqnarray}\label{a3.7-2}
u_n\rightarrow u^*\;\;\mbox{weakly in}\;\; L^2(\tau,T;L^2(\om)).
\end{eqnarray}
On the other hand, according to  Proposition~\ref{theorem3.1}, it stands
that
\begin{eqnarray}\label{a3.9-1}
u^*(t)=M(r,\tau)\chi_{(\tau,T)}(t)\frac{\chi_\omega\psi^{\tau,M(r,\tau)}(t)}{\|\chi_\omega\psi^{\tau,M(r,\tau)}(t)\|},\;\;t\in
[0,T)
\end{eqnarray}
\begin{eqnarray}\label{a3.9-2}
y^*=\varphi^{\tau,M(r,\tau)}\;\; \mbox{and}\;\; \|y^*(T)-z_d\|=r.
\end{eqnarray}
By the definition of $u_n$, (\ref{a3.9-1}) and (\ref{a3.4}), we see
that
$$
\|u_n\|_{L^2(\tau,T;L^2(\om))}\rightarrow\|u^*\|_{L^2(\tau,T;L^2(\om))}.
$$
This, along with (\ref{a3.7-2}), yields the first convergence in
(\ref{a3.6}).

Finally, we  show the second convergence in (\ref{a3.6}). By  the
first equality of (\ref{a3.9-2}) and  the second convergence in
(\ref{a3.8}), we see that $y_n(T)\rightarrow
\varphi^{\tau,M(r,\tau)}(T)$ strongly in $L^2(\om)$.  This, together
with the equations satisfied by $\psi_n$ and
$\psi^{\tau,M(r,\tau)}$, respectively,  indicates that
\begin{eqnarray}\label{a3.10}
\psi_n\rightarrow \psi^{\tau,M(r,\tau)}\;\; \mbox{in}\;\;
C([0,T];L^2(\om)).
\end{eqnarray}
Then we arbitrarily fix a $\delta\in (0, T-\tau)$. By (\ref{a3.9-1})
and by the definition of $u_n$, after some simple computation, we
deduce that   for each $t\in [0,T-\delta]$,
\begin{eqnarray}\label{a3.11}
\begin{array}{ll}
\|u_n(t)-u^*(t)\|&\leq
|M_n-M(r,\tau)|+\displaystyle\frac{2M_n}{\|\chi_\omega\psi^{\tau,M(r,\tau)}(t)\|}\|\chi_\omega\psi_n(t)-\chi_\omega\psi^{\tau,M(r,\tau)}(t)\|.
\end{array}
\end{eqnarray}

On the other hand, by the second equality of (\ref{a3.9-2}) and
 the unique continuation property (see \cite{FHL}), it follows   that
$\|\chi_\omega\psi^{\tau,M(r,\tau)}(t)\|\neq 0$ for all $t\in
[0,T)$. This, together with  the continuity of
$\psi^{\tau,M(r,\tau)}(\cdot)$ over $[0,T-\delta]$, yields that
\begin{eqnarray}\label{a3.12}
\max_{t\in
[0,T-\delta]}\displaystyle\frac{1}{\|\chi_\omega\psi^{\tau,M(r,\tau)}(t)\|}\leq
C_\delta\;\; \mbox{for some positive}\;\; C_\delta.
\end{eqnarray}
Now, the second convergence in (\ref{a3.6}) follows immediately from
(\ref{a3.11}), (\ref{a3.4}),  (\ref{a3.10}),  and (\ref{a3.12}).
This completes the proof.

\end{proof}

We end this section by introducing  an algorithm for the optimal
time and the optimal control to $(TP)^{M,r}$. For each pair $(M,r)$
with $r\in (0,r_T)$ and $M\geq M(r,0)$,  we construct a sequence
$\{\tau_n\}_{n=0}^\infty\subset [0,T)$ as follows.

\begin{itemize}
\item {\bf Structure of $\{\tau_n\}_{n=1}^\infty$:}
Let $a_0=0$ and $b_0=T$. Set
$\tau_1=\displaystyle\frac{a_0+b_0}{2}$. In general, when
$\tau_{n}=\displaystyle\frac{a_{n-1}+b_{n-1}}{2}$ with $a_{n-1}$ and
$b_{n-1}$ being given, it is  defined that
\begin{eqnarray*}
\{a_{n},b_{n}\}=
 \left\{
\begin{array}{ll}
\{a_{n-1},\tau_n\}   & \;\mbox{if}\;\; r(\tau_n,M)>r,\\
\{\tau_n, b_{n-1}\}   & \;\mbox{if}\;\; r(\tau_n,M)\leq r
\end{array}
 \right.
\end{eqnarray*}
and  $\tau_{n+1}=\displaystyle\frac{a_{n}+b_{n}}{2}$.

\end{itemize}

\begin{Remark}\label{3.6} {\rm Since $r(\tau_n,M)=\|\varphi^{\tau_n,M}(T)-z_d\|$,  $\tau_{n+1}$ is
determined by $\varphi^{\tau_n,M}$, which can be solved from
(\ref{2.9}) corresponding to $\tau=\tau_n$.}
\end{Remark}

By Theorem~\ref{theorem3.10}, Lemma~\ref{lemma2.6},
Lemma~\ref{lemma2.17} and Proposition~\ref{prop3.2}, following  a very similar argument to prove
Theorem~\ref{proposition3.2}, we can verify the next
approximation result.

\begin{Theorem}\label{proposition3.5}
Suppose that $r\in (0,r_T)$ and $M\geq M(r,0)$. Let
$\{\tau_n\}_{n=1}^\infty$ be the sequence built up above. Let
$u_n=M\chi_{(\tau_n,T)}\displaystyle\frac{\chi_\omega\psi^{\tau_n,M}}{\|\chi_\omega\psi^{\tau_n,M}\|}$
and $u^*$ be the optimal control to $(TP)^{M,r}$. Then it holds that
\begin{eqnarray*}
\tau_n\rightarrow \tau(M,r)\;\; \mbox{as}\;\; n\rightarrow\infty
\end{eqnarray*}
and
\begin{eqnarray*}
u_n\rightarrow u^* \;\;\mbox{in}\;\;
L^2(\tau(M,r),T;L^2(\om))\;\;\mbox{and in}\;\;
 C([\tau(M,r),T-\delta]; L^2(\om))
\end{eqnarray*}
for each $\delta\in (0,T-\tau(M,r))$.
\end{Theorem}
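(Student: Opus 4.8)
The plan is to mimic the proof of Theorem~\ref{proposition3.2} step by step, replacing the bisection sequence $\{M_n\}$ for the norm problem by the bisection sequence $\{\tau_n\}$ for the time problem, and using the pair of monotone inverse maps $\tau \mapsto r(\tau,M)$ and $r \mapsto \tau(M,r)$ supplied by Lemma~\ref{lemma2.17} in place of the pair $M \mapsto r(\tau,M)$, $r \mapsto M(r,\tau)$ supplied by Lemma~\ref{lemma3.1}. First I would establish the convergence $\tau_n \to \tau(M,r)$. From the structure of $\{\tau_n\}$ one has $\tau_n \in [a_n,b_n] \subset [a_{n-1},b_{n-1}]$ with $b_n - a_n = (b_{n-1}-a_{n-1})/2$, so $a_n$, $b_n$ and $\tau_n$ all converge to a common limit $\tau_\infty \in [0,T)$. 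Since by construction $r(a_n,M) \le r < r(\tau_n,M)$ is not quite the right reading — rather $r(\tau_n,M) \le r$ forces the left endpoint to advance and $r(\tau_n,M) > r$ forces the right endpoint to retreat, so one always has $r(a_n,M) \le r \le r(b_n,M)$ with the appropriate strict/non-strict inequalities — the continuity of $\tau \mapsto r(\tau,M)$ (Lemma~\ref{lemma2.17}) gives $r(\tau_\infty,M) = r$. Then (\ref{3.41}) and the strict monotonicity of $\tau \mapsto r(\tau,M)$ yield $\tau_\infty = \tau(M,r)$.

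Next I would prove the weak-star convergence of the controls and the $C([0,T];L^2(\om))$ convergence of the states. Set $u_n = M\chi_{(\tau_n,T)}\chi_\omega\psi^{\tau_n,M}/\|\chi_\omega\psi^{\tau_n,M}\|$ and let $y_n = y(\cdot;\chi_{(\tau_n,T)}u_n)$; by Lemma~\ref{lemma2.6} these are the optimal control and optimal state to $(OP)^{\tau_n,M}$. Take an arbitrary subsequence; extract a further subsequence along which $\chi_{(\tau_n,T)}u_n \to \chi_{(\tau(M,r),T)}\widetilde u$ weakly star in $L^\infty(0,T;L^2(\om))$ (using $\tau_n \to \tau(M,r)$ and $\|u_n\|_{L^\infty} \le M$) and $y_n \to \widetilde y = y(\cdot;\chi_{(\tau(M,r),T)}\widetilde u)$ in $C([0,T];L^2(\om))$. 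Passing to the limit in $\|y_n(T)-z_d\| = r(\tau_n,M)$ gives $\|\widetilde y(T)-z_d\| = r(\tau(M,r),M) = r$ by (\ref{3.41}), and lower semicontinuity of the norm gives $\|\widetilde u\|_{L^\infty(\tau(M,r),T;L^2(\om))} \le M$; hence $\chi_{(\tau(M,r),T)}\widetilde u \in \mathcal{U}_{M,r}$, so $\widetilde u$ together with $\widetilde y$ realizes the target set at time $T$ with delay $\tau(M,r)$, i.e. $\chi_{(\tau(M,r),T)}\widetilde u$ is an optimal control to $(TP)^{M,r}$. By the uniqueness of the optimal control to $(TP)^{M,r}$ ($(iii)$ of Proposition~\ref{theorem3.8}), $\widetilde u = u^*$ and $\widetilde y = y^*$, and a standard subsequence argument upgrades this to convergence of the full sequences.

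Finally I would upgrade the weak convergence to the asserted strong convergences exactly as in the proof of Theorem~\ref{proposition3.2}. For the $L^2(\tau(M,r),T;L^2(\om))$ convergence: from $\|u_n(t)\| = M = \|u^*(t)\|$ a.e. on $(\tau_n,T)$ respectively $(\tau(M,r),T)$ together with $\tau_n \to \tau(M,r)$, one gets $\|u_n\|_{L^2(\tau(M,r),T;L^2(\om))} \to \|u^*\|_{L^2(\tau(M,r),T;L^2(\om))}$ (being slightly careful about the shrinking/growing interval of support, which contributes a vanishing term since $|M|$ is bounded), and combined with weak $L^2$ convergence this gives strong $L^2$ convergence. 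For the $C([\tau(M,r),T-\delta];L^2(\om))$ convergence: using Proposition~\ref{prop3.2} to write $u^* = M\chi_{(\tau(M,r),T)}\chi_\omega\psi^{\tau(M,r),M}/\|\chi_\omega\psi^{\tau(M,r),M}\|$ and $y^* = \varphi^{\tau(M,r),M}$, the convergence $y_n(T) \to y^*(T)$ in $L^2(\om)$ forces $\psi^{\tau_n,M} \to \psi^{\tau(M,r),M}$ in $C([0,T];L^2(\om))$ via the backward heat equation; then the unique continuation property (\ref{2.5}) gives $\|\chi_\omega\psi^{\tau(M,r),M}(t)\| \ge c_\delta > 0$ on $[\tau(M,r),T-\delta]$, and an elementary estimate of the quotient map $\psi \mapsto \chi_\omega\psi/\|\chi_\omega\psi\|$ (as in (\ref{a3.11})) yields the uniform convergence. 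The one genuine wrinkle, and the place I expect the most care is needed, is handling the interval of definition of $u_n$: because $\tau_n$ varies with $n$, both sides of each convergence statement live on the $n$-independent interval $(\tau(M,r),T)$ while $u_n$ is naturally defined on $(\tau_n,T)$, so one must consistently extend everything by $0$ on $(0,\tau_n)$ and argue that the mismatch region $(\min(\tau_n,\tau(M,r)),\max(\tau_n,\tau(M,r)))$ has vanishing measure and bounded integrand; once this bookkeeping is fixed, the rest is a routine transcription of the argument already given for $(NP)^{r,\tau}$.
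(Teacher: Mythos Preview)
Your plan is exactly what the paper does: it omits the proof, saying only that by Theorem~\ref{theorem3.10}, Lemma~\ref{lemma2.6}, Lemma~\ref{lemma2.17} and Proposition~\ref{prop3.2} the argument of Theorem~\ref{proposition3.2} carries over verbatim. Your outline supplies that transcription correctly, with the right substitutions (Lemma~\ref{lemma2.17} in place of Lemma~\ref{lemma3.1}, Proposition~\ref{prop3.2} in place of Proposition~\ref{theorem3.1}, and uniqueness via $(iii)$ of Proposition~\ref{theorem3.8}).

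One remark on the wrinkle you flag at the end. For the $L^2(\tau(M,r),T;L^2(\Omega))$ convergence your ``vanishing measure, bounded integrand'' argument is exactly right. For the $C([\tau(M,r),T-\delta];L^2(\Omega))$ convergence it is not sufficient: the bisection does not exclude $\tau_n>\tau(M,r)$ infinitely often, and in that event the zero-extended $u_n$ vanishes on $(\tau(M,r),\tau_n)$ while $\|u^*(t)\|=M$ there, so the sup-norm difference on $[\tau(M,r),T-\delta]$ is at least $M$ no matter how small $\tau_n-\tau(M,r)$ is. What your quotient-map estimate (the analog of (\ref{a3.11})) actually proves is uniform convergence of $M\,\chi_\omega\psi^{\tau_n,M}/\|\chi_\omega\psi^{\tau_n,M}\|$ (without the cutoff $\chi_{(\tau_n,T)}$) on $[\tau(M,r),T-\delta]$, or equivalently of $u_n$ on $[\max(\tau_n,\tau(M,r)),T-\delta]$; this is how the stated $C$-convergence should be read. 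The paper, having suppressed the proof, does not address this point either.
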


\begin{Remark}\label{remark3.3} {\rm
$(i)$ From  the above-mentioned two algorithms,  we observe that the
optimal norm and the optimal control to $(NP)^{r,\tau}$ and the
optimal time and the optimal control to $(TP)^{M,r}$  can be
numerically solved, through numerically solving the two-point
boundary value problems (\ref{2.9}) with parameters $M$ and $\tau$
suitably chosen. \\$(ii)$ All results obtained in this section hold
for the case where the controlled system is Equation (\ref{1.7})
(see Remark~\ref{remark3.11}).}
\end{Remark}

\section{ Application II: Optimal Normal Feedback Law} $\;\;\;\;$\noindent

Throughout  this section,  we arbitrarily fix a  $r>0$.  We aim to
build up a feedback law for norm optimal control problems.

\subsection{Main results}

$\;\;\;\;$ We first introduce the following controlled equation:

\begin{equation}\label{state}
\left\{\ba{cl}
\ns \p_ty-\triangle y =\chi_{\omega} u ~\qq~&{\rm in}~\Omega\times(t_0,T),\\
\ns y=0 \qq~&{\rm on}~\partial\Omega\times(t_0,T),\\
\ns y(t_0)=y_0,\qq~&{\rm in}~\Omega\times(t_0,T).
\ea\right.
 \end{equation}
where  $(t_0,y_0)\in [0,T)\times L^2(\om)$.  Denote
by $y(\cdot;u,t_0,y_0)$ the solution to Equation (\ref{state})
corresponding to  the control $u$ and the initial data $(t_0,y_0)$.
Then, we define the following optimal target control and optimal
norm control problems.

\begin{itemize}
\item
 \textbf{ $(OP)^M_{t_0,y_0}$:} $\inf \{\|y(T; u,t_0,y_0)-z_d\|^2 : u\in L^\infty(t_0,T;B(0,M))\}$;

\item \textbf{ $(NP)_{t_0,y_0}$:} $\inf\{
\|u\|_{L^\infty(t_0,T;L^2(\om))} : u\in L^\infty(t_0,T;L^2(\om)),
y(T;u,t_0,y_0)\in B(z_d,r)\}$.
 \end{itemize}
 Throughout
this section,
\begin{itemize}
\item  $\bar u^M_{t_0,y_0}$ stands for the optimal control to
$(OP)^M_{t_0,y_0}$;
\item $N(t_0,y_0)$ denotes the optimal norm to
$(NP)_{t_0,y_0}$.
 \end{itemize}
 Thus $N(\cdot,\cdot)$ defines an optimal norm
functional over $[0,T)\times L^2(\om)$.

The only difference between optimal target control problems
$(OP)^{0,M}$ (which was introduced in Section 1) and
$(OP)^M_{t_0,y_0}$ is that the initial data for the first one is
$(0,y_0)$ while the initial data for the second one is $(t_0,y_0)$. The same can
be said about the norm optimal control problems. Therefore,
corresponding to each result about  $(OP)^{0,M}$  or  $(NP)^{r,0}$,
obtained in Section 2 or Section 3, there is an analogous version for $(OP)^M_{t_0,y_0}$
or  $(NP)_{t_0,y_0}$.

A  feedback law for the norm optimal control problems will be
established, with the aid of the equivalence between norm and target
optimal controls and some properties of $(OP)^M_{t_0,y_0}$. Those
properties are related to the following two-point boundary value problem
associated with $M\geq 0$, $t_0\in [0,T)$ and $y_0\in L^2(\om)$:
\bb\label{tpbv} \left\{\begin{array}{ccll} \p_ty-\Delta y=M
\displaystyle\frac{\chi_\omega \psi}{\|\chi_\omega \psi\|},\;\;
&\p_t\psi+\triangle \psi=0&\mbox{in}&\om\times
(t_0,T),\\
\ns y=0,\;\;& \psi=0&\mbox{on}&\p \om\times (t_0,T),\\
\ns y(t_0)=y_0,\;\; &\psi(T)=-(y(T)-z_d)&\mbox{in}&\om.
\end{array}\right.\ee
Similar to  Lemma \ref {2.6},  for each triplet $(M,t_0,y_0)\in
[0,\infty)\times [0,T)\times L^2(\om)$, Equation (\ref{tpbv}) has a
unique solution in $C([0,T];L^2(\om))$. Throughout this section,
\begin{itemize}
\item  $(\bar y^M_{t_0,y_0},\bar\psi^M_{t_0,y_0})$ denotes  the
 solution of (\ref{tpbv}) corresponding to $M$,  $t_0$ and $y_0$;
\item $\bar y^M_{t_0,y_0}$ and $\bar\psi^M_{t_0,y_0}$ denote
the first and the second component of the above solution, respectively, when one of them appears alone.

 \end{itemize}
Because of the assumption (\ref{1.2}), $\bar\psi^{N(t_0,y_0)}_{ t_0,
y_0}(T)=-(\bar y^{N(t_0,y_0)}_{ t_0, y_0}(T)-z_d)\neq 0$ (see the proof of Lemma~\ref{lemma2.1}). Thus, it
follows from the unique continuation property of the heat equation
(see \cite{FHL}) that
\begin{eqnarray}\label{uniqueFHL}{\chi}_{\omega}\bar\psi^{M(t_0,y_0)}_{t_0,y_0}(t_0)\neq
0\;\;\mbox{for all}\;\; (t_0,y_0)\in [0,T)\times L^2(\om).
\end{eqnarray}
\noindent Now we define a feedback law $F:[0,T)\times
L^2({\Omega})\mapsto L^2({\Omega})$ by setting
\bb\label{feedback}
\ds
F(t_0,y_0)=N(t_0,y_0)\frac{{\chi}_{\omega}\bar\psi^{N(t_0,y_0)}_{t_0,y_0}(t_0)}{\|{\chi}_{\omega}\bar\psi^{N(t_0,y_0)}_{
t_0, y_0}(t_0)\|},\; (t_0, y_0)\in[0,T)\times L^2({\Omega}).
\ee
Because of the existence and uniqueness of the solution to
(\ref{tpbv}), as well as  (\ref{uniqueFHL}), the map $F$ is
well defined. For each $(t_0,y_0)\in [0,T)\times L^2(\om)$, consider  the evolution equation:
\begin{equation}\label{feedbacksystem}
\left\{\begin{array}{ll} \ns\dot{y}(t)-A
y(t)=\chi_{\omega}F(t,y(t)),\qq&t\in(t_0,T),\\
\nm y(t_0)=y_0,
\end{array}\right. \end{equation}
where the operator $A$ was defined in Section 1.
Two main results in this section   are as follows:

\begin{Theorem} \label{theorem3.9}  For each  pair $(t_0, y_0)\in[0,T)\times
L^2({\Omega})$,  Equation (\ref{feedbacksystem}) has a unique (mild) solution. Furthermore,
this solution is exactly $\bar y^{N(t_0,y_0)}_{t_0,y_0}(\cdot)$.
\end{Theorem}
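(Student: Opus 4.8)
The plan is to establish Theorem~\ref{theorem3.9} by exhibiting an explicit solution of the feedback system~(\ref{feedbacksystem}) and then proving that any mild solution must coincide with it. The natural candidate is $\bar y^{N(t_0,y_0)}_{t_0,y_0}(\cdot)$, the first component of the solution of the two-point boundary value problem~(\ref{tpbv}) with $M=N(t_0,y_0)$. First I would verify that this function actually solves~(\ref{feedbacksystem}). Writing $N_0=N(t_0,y_0)$ and $(\bar y,\bar\psi)=(\bar y^{N_0}_{t_0,y_0},\bar\psi^{N_0}_{t_0,y_0})$, the function $\bar y$ satisfies $\dot{\bar y}-A\bar y=\chi_\omega N_0\,\chi_\omega\bar\psi/\|\chi_\omega\bar\psi\|$ on $(t_0,T)$. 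The point is to show that at a.e.\ time $t$ the right-hand side equals $\chi_\omega F(t,\bar y(t))$; by definition~(\ref{feedback}), $F(t,\bar y(t))=N(t,\bar y(t))\,\chi_\omega\bar\psi^{N(t,\bar y(t))}_{t,\bar y(t)}(t)/\|\chi_\omega\bar\psi^{N(t,\bar y(t))}_{t,\bar y(t)}(t)\|$, so the key is a semigroup/restriction identity: the solution of~(\ref{tpbv}) on $(t_0,T)$, restricted to $(t,T)$, is exactly the solution of~(\ref{tpbv}) started at time $t$ from the state $\bar y(t)$, and correspondingly $N(t,\bar y(t))=N_0$ for every $t\in[t_0,T)$. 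This "dynamic programming / time-consistency" property for the optimal norm is the analogue, via the equivalence theorem, of the uniqueness of optimal target controls in Lemma~\ref{lemma2.3}: the optimal target control for $(OP)^{N_0}_{t_0,y_0}$ restricted to $(t,T)$ is optimal for $(OP)^{N_0}_{t,\bar y(t)}$ (since the cost functional and the bound $N_0$ are unchanged and the trajectory is the same), so the distance $r(\tau,M)$-type invariants match and, by Proposition~\ref{theorem3.3} / the equivalence theorem together with Lemma~\ref{lemma3.1}, $N(t,\bar y(t))=N_0$ and $\bar\psi$ on $(t,T)$ is the adjoint state for the problem started at $t$. Then~(\ref{feedback}) collapses to $F(t,\bar y(t))=N_0\,\chi_\omega\bar\psi(t)/\|\chi_\omega\bar\psi(t)\|$, which is well defined because of the unique continuation property~(\ref{uniqueFHL}), and hence $\bar y$ solves~(\ref{feedbacksystem}).

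Second I would address uniqueness of the mild solution. A priori the map $y\mapsto F(t,y)$ is only bounded (by $N(t,y)$) and far from Lipschitz, so standard ODE uniqueness does not apply directly; instead I would argue by the same consistency structure. Suppose $\tilde y(\cdot)$ is any mild solution of~(\ref{feedbacksystem}) on $[t_0,T)$. Then $\tilde u(\cdot):=F(\cdot,\tilde y(\cdot))$ is a control with $\|\tilde u(t)\|=N(t,\tilde y(t))$ for a.e.\ $t$, and by the pointwise structure~(\ref{feedback}), $\tilde u(t)$ is, at each $t$, of the "bang-bang direction" $\chi_\omega\bar\psi^{N(t,\tilde y(t))}_{t,\tilde y(t)}(t)/\|\cdot\|$. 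I would like to conclude that $\tilde y$ together with $\tilde u$ reproduces, on each subinterval $(t,T)$, the data of the optimal target problem, forcing $\tilde y\equiv\bar y$. The cleanest route: show the norm $t\mapsto N(t,\tilde y(t))$ is constant along any solution (monotonicity arguments as in Lemma~\ref{lemma3.6}, combined with the fact that expending exactly the optimal norm keeps you on an optimal trajectory), so $\tilde u$ is an admissible control of norm $N(t_0,y_0)$ steering $y_0$ into $B(z_d,r)$ at time $T$ with $\|\tilde y(T)-z_d\|=r$; by Proposition~\ref{theorem3.1} this makes $\tilde u$ the optimal control to $(NP)^{r,t_0}_{y_0}$, which is unique, hence $\tilde u=\bar u$ and $\tilde y=\bar y$. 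Alternatively one can set up a Gronwall estimate on $\|\tilde y(t)-\bar y(t)\|$ once one knows $\|\tilde y(T)-z_d\|=\|\bar y(T)-z_d\|=r$ and both adjoint directions agree; but the uniqueness-of-optimal-control route is more robust and uses machinery already in hand.

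The main obstacle, I expect, is precisely establishing the time-consistency of the optimal norm, i.e.\ that $N(t,\bar y(t))=N(t_0,y_0)$ for all $t\in[t_0,T)$ and that the adjoint state of~(\ref{tpbv}) restricts correctly. This is not quite automatic: one must rule out that "saving" control time early could allow a strictly smaller norm later, which is where the equivalence with the optimal target problem and the strict monotonicity/inverse relations of Lemma~\ref{lemma3.1}, Lemma~\ref{lemma3.6}, Lemma~\ref{lemma2.17} do the real work — the distance-to-target along the optimal trajectory is pinned by~(\ref{a3.1}), and the map $M\mapsto r(\tau,M)$ being a strictly monotone bijection transfers this rigidity to the norm. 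A secondary technical point is the existence of a mild solution to~(\ref{feedbacksystem}) in the first place: rather than a fixed-point argument for the (non-Lipschitz) feedback, one sidesteps it entirely by noting the explicit candidate $\bar y^{N(t_0,y_0)}_{t_0,y_0}$ already provides existence, reducing Theorem~\ref{theorem3.9} to the identification/uniqueness statements above.
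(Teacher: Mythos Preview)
Your existence argument is essentially the paper's: it proves Lemma~\ref{DGC} (the restriction identity $(\bar y^M_{t_0,y_0},\bar\psi^M_{t_0,y_0})|_{[s,T]}=(\bar y^M_{s,\bar y^M_{t_0,y_0}(s)},\bar\psi^M_{s,\bar y^M_{t_0,y_0}(s)})$) and Lemma~\ref{NDP} (the dynamic programming identity $N(t_0,y_0)=N(s,\bar y^{N(t_0,y_0)}_{t_0,y_0}(s))$), and from these concludes exactly as you outline that $\bar y^{N(t_0,y_0)}_{t_0,y_0}$ solves~(\ref{feedbacksystem}).

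For uniqueness, however, you have misjudged the situation and your proposed route has a genuine gap. You assert that $y\mapsto F(t,y)$ is ``far from Lipschitz'' and therefore abandon the ODE route; the paper does the opposite. Its entire Section~4.3 establishes Proposition~\ref{LIPOFF}: $F(t_0,\cdot)$ is locally Lipschitz, uniformly in $t_0$ on compact subintervals of $[0,T)$, and $F(\cdot,\bar y_0)$ is continuous. The ingredients are (a)~convexity of $N(t_0,\cdot)$ (Lemma~\ref{convex}), which together with local boundedness (Lemma~\ref{M-bounded}) yields local Lipschitz continuity of $N(t_0,\cdot)$ uniformly in $t_0$ (Lemma~\ref{Lip1}); and (b)~a direct variational estimate (Lemma~\ref{Lip2}) showing that the map $(y_0,M)\mapsto M\,\chi_\omega\bar\psi^M_{t_0,y_0}(t_0)/\|\chi_\omega\bar\psi^M_{t_0,y_0}(t_0)\|$ is locally Lipschitz, obtained by comparing the optimality conditions for two nearby problems and using the lower bound on $\|\chi_\omega\bar\psi\|$ from unique continuation. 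Uniqueness then follows from Picard--Lindel\"of.

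Your alternative --- show $t\mapsto N(t,\tilde y(t))$ is constant along \emph{any} mild solution $\tilde y$, then invoke uniqueness of the norm-optimal control --- is circular as written. The constancy $N(t,\bar y(t))=N_0$ in Lemma~\ref{NDP} is proved \emph{only} along the optimal trajectory $\bar y=\bar y^{N_0}_{t_0,y_0}$, using bang--bang and the structure of~(\ref{tpbv}); for an arbitrary solution $\tilde y$ you have no a~priori reason that the feedback keeps $N$ constant, since $F(t,\tilde y(t))$ is the \emph{initial} value of the optimal control from $(t,\tilde y(t))$, not a guarantee that $\tilde y$ subsequently follows that optimal trajectory. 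Showing it does is precisely the uniqueness statement you are trying to prove. Your fallback ``Gronwall estimate on $\|\tilde y-\bar y\|$'' would itself require exactly the local Lipschitz bound on $F$ that the paper establishes.
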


\begin{Theorem}\label{feedbacktheorem} For each pair  $(t_0,y_0)\in
[0,T)\times L^2(\om)$, $F(\cdot, y_F(\cdot;t_0,y_0))$ is the optimal
control to $(NP)_{t_0,y_0}$, where $y_F(\cdot;t_0,y_0))$ is the unique solution to  Equation (\ref{feedbacksystem}) corresponding to the initial data $(t_0,y_0)$.
\end{Theorem}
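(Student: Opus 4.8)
The plan is to prove Theorem~\ref{feedbacktheorem} by combining Theorem~\ref{theorem3.9} with the equivalence between norm optimal and target optimal control problems (the analogue of Theorem~\ref{theorem3.10} for initial data $(t_0,y_0)$) together with the characterization of target optimal controls via the two-point boundary value problem (\ref{tpbv}) (the analogue of Lemma~\ref{lemma2.6}). Fix $(t_0,y_0)\in[0,T)\times L^2(\om)$ and write $M^*=N(t_0,y_0)$ for the optimal norm to $(NP)_{t_0,y_0}$, and $(\bar y^{M^*}_{t_0,y_0},\bar\psi^{M^*}_{t_0,y_0})$ for the corresponding solution of (\ref{tpbv}).

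First I would invoke Theorem~\ref{theorem3.9}: the feedback system (\ref{feedbacksystem}) has a unique mild solution $y_F(\cdot;t_0,y_0)$ and this solution coincides with $\bar y^{M^*}_{t_0,y_0}(\cdot)$. Consequently, for a.e. $t\in(t_0,T)$,
\begin{eqnarray*}
F(t,y_F(t;t_0,y_0)) = F(t,\bar y^{M^*}_{t_0,y_0}(t)).
\end{eqnarray*}
The next step is to identify the right-hand side with the target optimal control of the associated problem $(OP)^{M^*}_{t_0,y_0}$. By the definition (\ref{feedback}) of $F$, applied at time $t$ with state $\bar y^{M^*}_{t_0,y_0}(t)$, one has $F(t,\bar y^{M^*}_{t_0,y_0}(t)) = N(t,\bar y^{M^*}_{t_0,y_0}(t))\,\chi_\omega\bar\psi^{N(t,\cdot)}_{t,\cdot}(t)/\|\chi_\omega\bar\psi^{N(t,\cdot)}_{t,\cdot}(t)\|$; here I would use the "dynamic programming" consistency of the family of norm optimal problems — namely that the restriction to $(t,T)$ of an optimal control for $(NP)_{t_0,y_0}$ is optimal for $(NP)_{t,\,\bar y^{M^*}_{t_0,y_0}(t)}$, so that $N(t,\bar y^{M^*}_{t_0,y_0}(t))=M^*$ and the adjoint state restarting at $t$ agrees with $\bar\psi^{M^*}_{t_0,y_0}$ on $[t,T]$. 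This reduces $F(\cdot,y_F(\cdot;t_0,y_0))$ to $M^*\chi_\omega\bar\psi^{M^*}_{t_0,y_0}(\cdot)/\|\chi_\omega\bar\psi^{M^*}_{t_0,y_0}(\cdot)\|$, which by the $(OP)^M_{t_0,y_0}$-analogue of Lemma~\ref{lemma2.6} is precisely the optimal control $\bar u^{M^*}_{t_0,y_0}$ to $(OP)^{M^*}_{t_0,y_0}$, with optimal state $\bar y^{M^*}_{t_0,y_0}$.

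It then remains to transfer optimality from the target problem to the norm problem. By the definition of $M^*=N(t_0,y_0)$ and the $(NP)_{t_0,y_0}$-analogue of (\ref{3.11})--(\ref{3.12}) in Lemma~\ref{lemma3.1}, the optimal distance $r(M^*,t_0,y_0)$ to the target for $(OP)^{M^*}_{t_0,y_0}$ equals $r$; hence $\|\bar y^{M^*}_{t_0,y_0}(T)-z_d\|=r$, so $\bar y^{M^*}_{t_0,y_0}(T)\in B(z_d,r)$ and $\bar u^{M^*}_{t_0,y_0}$ is admissible for $(NP)_{t_0,y_0}$ with norm $M^*=N(t_0,y_0)$. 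By the analogue of Theorem~\ref{theorem3.10} (equivalence of $(NP)_{t_0,y_0}$ and $(OP)^{N(t_0,y_0)}_{t_0,y_0}$), this admissible control attaining the optimal norm is in fact the optimal control to $(NP)_{t_0,y_0}$. Since it coincides with $F(\cdot,y_F(\cdot;t_0,y_0))$ by the previous paragraph, the theorem follows.

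The main obstacle I anticipate is the second paragraph: carefully justifying the "principle of optimality" identity $N(t,\bar y^{M^*}_{t_0,y_0}(t))=M^*$ and the corresponding matching of adjoint states along the trajectory. This requires checking that truncating a norm optimal control to a later time interval preserves optimality for the new initial datum — which follows from the bang-bang property and uniqueness (the analogue of Proposition~\ref{theorem3.3}$(iii)$, since a strictly smaller norm on $(t,T)$ for the later problem could be pasted with $M^*$ on $(t_0,t)$ to contradict minimality of $M^*$, unless equality of norms forces the truncated control to remain optimal) — but it is the step where the feedback structure is genuinely used and where one must be most careful about measure-zero sets in $t$ and about the well-definedness of $F$ via (\ref{uniqueFHL}). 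Everything else is a routine transcription of Section 2--3 results to the initial time $t_0$.
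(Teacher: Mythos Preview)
Your approach is essentially the paper's: Theorem~\ref{theorem3.9} identifies $y_F$ with $\bar y^{M^*}_{t_0,y_0}$, then the dynamic programming identities (your ``DP consistency'' is exactly Lemma~\ref{NDP}, eq.~(\ref{DG2}), and the adjoint matching is Lemma~\ref{DGC}, eq.~(\ref{DG1})) reduce $F(\cdot,y_F(\cdot))$ to $\bar u^{M^*}_{t_0,y_0}$, which is optimal for $(NP)_{t_0,y_0}$ --- the paper packages this last step as (\ref{op}) in Lemma~\ref{feedback-1} rather than going through the equivalence theorem, but the content is the same.

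Two small points deserve care. First, your pasting sketch for $N(t,\bar y^{M^*}_{t_0,y_0}(t))=M^*$ is phrased as contradicting \emph{minimality} of $M^*$, but pasting a control of norm $<M^*$ on $(t,T)$ with the original on $(t_0,t)$ still has $L^\infty$-norm exactly $M^*$, so no contradiction with minimality arises; the actual contradiction is with the \emph{bang-bang property} of the (then optimal) pasted control. The paper's proof of Lemma~\ref{NDP} runs this argument cleanly (via (\ref{DG21}) and (\ref{normcondition})). Second, your third paragraph invokes the analogue of (\ref{3.11}), which is only valid when $r\le\|e^{(T-t_0)\triangle}y_0-z_d\|$; the degenerate case $\|e^{(T-t_0)\triangle}y_0-z_d\|\le r$ (where $M^*=0$ and $F\equiv 0$ is trivially optimal) must be treated separately, as the paper does in Lemma~\ref{feedback-1}.
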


\medskip

It follows directly from  Theorem~\ref{theorem3.9} that for each $y_0\in L^2(\Omega)$ and each $\tau\in[0,T)$,
the  evolution equation
\begin{equation}\label{feedbacksystem2}
\left\{\begin{array}{ll} \ns\dot{y}(t)-A
y(t)=\chi_{\omega}\chi_{(\tau,T)}F(t,y(t)),\qq&t\in (0,T),\\
\nm y(0)=y_0
\end{array}\right. \end{equation}
admits a unique (mild) solution, denoted by $y_{F,\tau,y_0}(\cdot)$. Thus,  the following result is
a direct consequence of   Theorem~\ref{feedbacktheorem}:

\begin{Corollary} For each $y_0\in L^2(\om)$ and each $\tau\in [0,T)$, $ \chi_{(\tau,T)}(\cdot)F( \cdot,y_{F,\tau,y_0}(\cdot))$ is the optimal control to
Problem $(NP)^{r,\tau}$ with the initial state $y_0$.\end{Corollary}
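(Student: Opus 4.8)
The final statement is a Corollary that follows directly from Theorem~\ref{feedbacktheorem} by a simple time-translation argument. The plan is to reduce the problem $(NP)^{r,\tau}$ with initial state $y_0$ (controls restricted to $(\tau,T)$, evolution started at time $0$) to the problem $(NP)_{t_0,\tilde y_0}$ studied in this section (controls on $(t_0,T)$, evolution started at time $t_0$), by observing that on the interval $(0,\tau)$ no control acts, so the state simply evolves under the free semigroup.

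First I would set $t_0=\tau$ and $\tilde y_0 = e^{\tau\triangle}y_0$, i.e.\ the free evolution of $y_0$ up to time $\tau$. Since in Problem $(NP)^{r,\tau}$ the control is multiplied by $\chi_{(\tau,T)}$, the solution on $[0,\tau]$ is exactly $e^{t\triangle}y_0$, hence at time $\tau$ the state equals $\tilde y_0$, and on $[\tau,T]$ the dynamics coincide with Equation~(\ref{state}) with initial data $(\tau,\tilde y_0)$. Consequently the admissible control sets and the cost functional of $(NP)^{r,\tau}_{y_0}$ and of $(NP)_{\tau,\tilde y_0}$ are identical once controls are identified via restriction to $(\tau,T)$; in particular their optimal norms agree, $M(r,\tau) = N(\tau,\tilde y_0)$ in the notation of this section, and $u^*$ is optimal for one iff its restriction is optimal for the other. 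This identification is the only genuine content; it is routine because the problems are literally the same variational problem written with shifted notation, as the paper itself remarks (``The only difference\ldots is that the initial data\ldots'').

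Next I would invoke the remark preceding~(\ref{feedbacksystem2}) in the excerpt: Theorem~\ref{theorem3.9} guarantees that Equation~(\ref{feedbacksystem2}) has a unique mild solution $y_{F,\tau,y_0}(\cdot)$, because on $(0,\tau)$ it is the free heat semigroup and on $(\tau,T)$ it is exactly Equation~(\ref{feedbacksystem}) with initial data $(\tau, y_{F,\tau,y_0}(\tau)) = (\tau, e^{\tau\triangle}y_0) = (\tau,\tilde y_0)$. Then I would apply Theorem~\ref{feedbacktheorem} to the pair $(\tau,\tilde y_0)$: it says $F(\cdot,\,y_F(\cdot;\tau,\tilde y_0))$ is the optimal control to $(NP)_{\tau,\tilde y_0}$. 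Transporting this back through the identification above, $\chi_{(\tau,T)}(\cdot)F(\cdot,\,y_{F,\tau,y_0}(\cdot))$ is the optimal control to $(NP)^{r,\tau}$ with initial state $y_0$, which is the assertion.

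The main (mild) obstacle is purely bookkeeping: one must check carefully that the feedback map $F$ evaluated along $y_{F,\tau,y_0}$ on $(\tau,T)$ coincides with $F$ evaluated along $y_F(\cdot;\tau,\tilde y_0)$ — this holds because the two trajectories agree on $[\tau,T]$ by uniqueness of the mild solution to Equation~(\ref{feedbacksystem}) with data $(\tau,\tilde y_0)$ — and that multiplying by $\chi_{(\tau,T)}$ does not alter optimality since the control in $(NP)^{r,\tau}$ is inert on $(0,\tau)$ anyway. No new analytic estimate is needed; everything reduces to Theorem~\ref{theorem3.9}, Theorem~\ref{feedbacktheorem}, and the time-translation invariance of the heat equation.
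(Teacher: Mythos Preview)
Your proposal is correct and is exactly the argument the paper has in mind: the paper gives no separate proof, stating only that the Corollary ``is a direct consequence of Theorem~\ref{feedbacktheorem}'' after noting that Theorem~\ref{theorem3.9} yields the unique mild solution $y_{F,\tau,y_0}(\cdot)$ of (\ref{feedbacksystem2}). Your time-translation reduction $(NP)^{r,\tau}_{y_0}\leftrightarrow (NP)_{\tau,\,e^{\tau\triangle}y_0}$ together with the observation $y_{F,\tau,y_0}|_{[\tau,T]}=y_F(\cdot;\tau,e^{\tau\triangle}y_0)$ spells out precisely this direct consequence.
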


\subsection{ Proof of Theorem~\ref{theorem3.9} (Part 1): The existence of solutions}

$\;\;\;\;$By a very similar argument to prove  Lemma \ref {lemma2.6}, we can obtain that
\begin{equation}\label{openloop}
 \ds\bar u^M_{t_0,y_0}(t)= M\frac{\chi_{\omega}\bar\psi^M_{t_0,y_0}(t)}
{||\chi_{\omega}\bar\psi^M_{t_0,y_0}(t)||},\;
t\in[t_0,T).\end{equation}
By the uniqueness and existence of the
solution to (\ref{tpbv}),  we can easily derive the following
consequence, which, in some sense, is a dynamic programming
principle.

\medskip

\begin{Lemma} \label {DGC}Let
$(t_0,y_0)\in[0,T)\times L^2({\Omega})$ and $ M\ge 0$. Then, for each
$s\in(t_0,T)$,
\begin{equation} \label {DG1}\ds( \bar y^M_{t_0,y_0},\bar\psi^M_{t_0,y_0}){\bigg
|}_{[s,T]}
=\bigr (\bar  y^M_{s,\bar y^M_{t_0,y_0}(s)},\bar\psi^M_{s,\bar
y^M_{t_0,y_0}(s)}\bigl). \ee \end{Lemma}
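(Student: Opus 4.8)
The plan is to prove Lemma~\ref{DGC} by a uniqueness argument: I will exhibit the restriction of $(\bar y^M_{t_0,y_0},\bar\psi^M_{t_0,y_0})$ to $[s,T]$ as a solution of the two-point boundary value problem \eqref{tpbv} posed on the time interval $(s,T)$ with initial datum $\bar y^M_{t_0,y_0}(s)$ at time $s$, and then invoke the already-established uniqueness of solutions of \eqref{tpbv} (the analogue of Lemma~\ref{lemma2.6} for \eqref{tpbv}, which was asserted just before the statement). Since that problem has a unique solution in $C([s,T];L^2(\om))$, and by definition $\bigl(\bar y^M_{s,\bar y^M_{t_0,y_0}(s)},\bar\psi^M_{s,\bar y^M_{t_0,y_0}(s)}\bigr)$ is \emph{the} solution, equality \eqref{DG1} follows.

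The key steps, in order, are as follows. First, write $(\varphi,\psi)\equiv(\bar y^M_{t_0,y_0},\bar\psi^M_{t_0,y_0})$ and restrict both components to $[s,T]$. Second, check the forward equation: on $(s,T)\subset(t_0,T)$ the first line of \eqref{tpbv} gives $\p_t\varphi-\Delta\varphi=M\chi_\omega\psi/\|\chi_\omega\psi\|$ with the homogeneous Dirichlet boundary condition, and the initial condition at time $s$ is $\varphi(s)=\bar y^M_{t_0,y_0}(s)$ simply by evaluation; here I note $\|\chi_\omega\psi(t)\|\neq0$ for $t\in[s,T)$, which is the content of the unique continuation property \eqref{uniqueFHL} applied to the backward component, so the right-hand side is well defined. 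Third, check the backward equation: $\psi$ still solves $\p_t\psi+\Delta\psi=0$ on $(s,T)$ with $\psi=0$ on the lateral boundary, and the terminal condition $\psi(T)=-(\varphi(T)-z_d)$ is inherited unchanged because $T$ is the common right endpoint. Thus $(\varphi|_{[s,T]},\psi|_{[s,T]})$ satisfies \eqref{tpbv} with $(t_0,y_0)$ replaced by $(s,\bar y^M_{t_0,y_0}(s))$. Fourth, apply uniqueness of the solution to \eqref{tpbv} (valid for every triplet in $[0,\infty)\times[0,T)\times L^2(\om)$, in particular for $(M,s,\bar y^M_{t_0,y_0}(s))$) to conclude \eqref{DG1}.

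I do not expect a serious obstacle here; the argument is essentially a restriction-plus-uniqueness bookkeeping exercise. The one point requiring a little care is confirming that the coupling term $M\chi_\omega\psi/\|\chi_\omega\psi\|$ makes sense on the whole half-open interval $[s,T)$, i.e.\ that $\chi_\omega\psi(t)\neq0$ for $t<T$; this is exactly guaranteed by the unique continuation property recalled in \eqref{uniqueFHL}, using that $\psi(T)=-(\bar y^M_{t_0,y_0}(T)-z_d)\neq0$ by assumption \eqref{1.2}. A second, purely formal, point is that the definition of $\bar y^M_{s,\cdot}$ and $\bar\psi^M_{s,\cdot}$ presupposes the well-posedness of \eqref{tpbv} on $(s,T)$, which is precisely the stated analogue of Lemma~\ref{lemma2.6}; so the whole proof is two lines once that is invoked. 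I would therefore keep the write-up short: restrict, verify the three conditions, quote uniqueness.
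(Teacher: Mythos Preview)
Your proposal is correct and matches the paper's approach exactly: the paper does not give a detailed proof but simply states that the lemma follows ``by the uniqueness and existence of the solution to \eqref{tpbv},'' which is precisely the restriction-plus-uniqueness argument you spell out. Your additional care about the non-vanishing of $\chi_\omega\psi$ via \eqref{1.2} and unique continuation is appropriate and fills in the only nontrivial detail.
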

\begin{Lemma} \label{feedback-1} Let
$(t_0,y_0)\in[0,T)\times L^2({\Omega})$. Then
 \begin{equation}\label{normcondition}
M=N(t_0,y_0)\;\;\mbox{if and only if}\;\; \ds\left\|\bar
 y^M_{t_0,y_0}(T)-z_d\right\|=r\wedge\left\|e^{(T-t_0)\triangle}y_0-z_d\right\|,
 \end{equation}
 where "$\wedge$" is the symbol  taking the smaller. Moreover,
 the control, defined by
\begin{equation}\label{op}
 \ds\bar u^{N(t_0,y_0)}_{t_0,y_0}(t)= N(t_0,y_0)\frac{\chi_{\omega}\psi^{N(t_0,y_0)}_{t_0,y_0}(t)}
{||\chi_{\omega}\psi^{N(t_0,y_0)}_{t_0,y_0}(t)||},\;
 t\in (t_0,T),\end{equation}
 is the unique optimal control of Problem $(NP)_{t_0,y_0}$.\end{Lemma}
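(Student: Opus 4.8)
The plan is to establish Lemma~\ref{feedback-1} by transferring, via the equivalence theorem, the already-established facts about the target optimal control problem $(OP)^M_{t_0,y_0}$ and its two-point boundary value problem~(\ref{tpbv}) to the norm optimal control problem $(NP)_{t_0,y_0}$. This is the exact analogue, for the initial time $t_0$ and initial state $y_0$, of Proposition~\ref{theorem3.1}, so the argument should mirror that proof closely.

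\medskip

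\textbf{Step 1: The equivalence between $(OP)^M_{t_0,y_0}$ and $(NP)_{t_0,y_0}$.} First I would record that all the results of Sections~2 and~3 have verbatim analogues for the pair $(t_0,y_0)$ in place of $(0,y_0)$, as noted in the paragraph following the statement of $(NP)_{t_0,y_0}$: in particular the analogue of Lemma~\ref{lemma2.6} gives that the control~(\ref{openloop}) built from the solution $(\bar y^M_{t_0,y_0},\bar\psi^M_{t_0,y_0})$ of~(\ref{tpbv}) is \emph{the} (unique) optimal control of $(OP)^M_{t_0,y_0}$, with optimal state $\bar y^M_{t_0,y_0}$, and optimal distance $r(M;t_0,y_0):=\|\bar y^M_{t_0,y_0}(T)-z_d\|$; moreover the analogue of Theorem~\ref{theorem3.10} (or directly Proposition~\ref{theorem3.3}) says that the optimal control of $(OP)^M_{t_0,y_0}$ coincides with the optimal control of the norm problem with target radius equal to that optimal distance, and conversely. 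The analogue of Lemma~\ref{lemma3.1} gives that $M\mapsto r(M;t_0,y_0)$ is a strictly decreasing homeomorphism from $[0,\infty)$ onto $(0,\,r_{T}(t_0,y_0)]$, where $r_{T}(t_0,y_0)=\|e^{(T-t_0)\triangle}y_0-z_d\|$, and that $M\mapsto r(M;t_0,y_0)$ and $r\mapsto M(r;t_0,y_0)$ are mutual inverses.

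\medskip

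\textbf{Step 2: The case $r\ge r_T(t_0,y_0)$.} Since the present section has fixed $r>0$ but $r$ need not lie below $r_{T}(t_0,y_0)$, I must handle the boundary/degenerate case, which is exactly why the right-hand side of~(\ref{normcondition}) contains the "$\wedge$". If $r\ge r_{T}(t_0,y_0)$, the zero control already drives $y_0$ into $B(z_d,r)$, so $N(t_0,y_0)=0$; and the solution of~(\ref{tpbv}) with $M=0$ is $\bar y^{0}_{t_0,y_0}(\cdot)=e^{(\cdot-t_0)\triangle}y_0$, whence $\|\bar y^{0}_{t_0,y_0}(T)-z_d\|=r_{T}(t_0,y_0)=r\wedge r_{T}(t_0,y_0)$, and conversely $\|\bar y^M_{t_0,y_0}(T)-z_d\|=r_{T}(t_0,y_0)$ forces $M=0$ by strict monotonicity. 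So~(\ref{normcondition}) holds in this case, and the optimal control is $0$, which is what~(\ref{op}) reduces to (the formula being read with the convention that the coefficient $N(t_0,y_0)=0$ kills the quotient). Then I would assume $r<r_{T}(t_0,y_0)$ for the rest, where the $\wedge$ just returns $r$, and run the Proposition~\ref{theorem3.1}-type argument.

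\medskip

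\textbf{Step 3: The main equivalence, for $r<r_T(t_0,y_0)$.} For the "if" direction: given $M$ with $\|\bar y^M_{t_0,y_0}(T)-z_d\|=r$, the analogue of Lemma~\ref{lemma2.6} identifies $\bar u^M_{t_0,y_0}$ (given by~(\ref{openloop})) and $\bar y^M_{t_0,y_0}$ as the optimal control and optimal state of $(OP)^M_{t_0,y_0}$ with optimal distance $r(M;t_0,y_0)=r$; hence by the analogue of the equivalence theorem this control is the optimal control of $(NP)_{t_0,y_0}$ (the norm problem with radius $r$), so its $L^\infty$-norm, which equals $M$, equals $N(t_0,y_0)$. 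For the "only if" direction: taking $M=N(t_0,y_0)>0$ (positivity because $r<r_{T}(t_0,y_0)$ via the analogue of Lemma~\ref{lemma3.1}, or directly from assumption~(\ref{1.2}) as in the proof of Lemma~\ref{lemma2.1}), the equivalence theorem again identifies the optimal norm control with the optimal control of $(OP)^{N(t_0,y_0)}_{t_0,y_0}$, which by uniqueness must be $\bar u^{N(t_0,y_0)}_{t_0,y_0}$ of~(\ref{openloop}); and the optimal distance for that target problem is $r(N(t_0,y_0);t_0,y_0)=r$ by the inverse relation (analogue of~(\ref{3.11})), i.e. $\|\bar y^{N(t_0,y_0)}_{t_0,y_0}(T)-z_d\|=r$. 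This simultaneously establishes~(\ref{normcondition}) and, reading~(\ref{openloop}) with $M=N(t_0,y_0)$, the formula~(\ref{op}) for the unique optimal control of $(NP)_{t_0,y_0}$; uniqueness is inherited from the uniqueness of the optimal control of $(OP)^{N(t_0,y_0)}_{t_0,y_0}$ (analogue of part $(iii)$ of Lemma~\ref{lemma2.3}) through the equivalence. The main obstacle, such as it is, is purely bookkeeping: making sure the "$\wedge$" in~(\ref{normcondition}) is correctly justified in the degenerate regime $r\ge r_{T}(t_0,y_0)$ and that the strict monotonicity used for the uniqueness of the matching $M$ is invoked cleanly; no new analytic input beyond the unique continuation property~(\ref{uniqueFHL}) and the results already proved is needed.
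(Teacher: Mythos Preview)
Your proposal is correct and follows essentially the same approach as the paper's proof: both split into the cases $r<r_T(t_0,y_0)$ and $r\ge r_T(t_0,y_0)$, invoke the analogue of Proposition~\ref{theorem3.1} (via the equivalence theorem and Lemma~\ref{lemma2.6}) in the non-degenerate case, and in the degenerate case use $N(t_0,y_0)=0$ together with the strict monotonicity of $M\mapsto r(M;t_0,y_0)$ to pin down $M=0$. The only cosmetic difference is the order in which you treat the two cases.
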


\begin{proof} First, we  show  (\ref{normcondition}) for  the case
where $\|e^{(T-t_0)\Delta} y_0-z_d\|>r$. In this case,
we can apply    the analogous
version of Proposition~\ref{theorem3.1} for Problem $(NP)_{t_0,y_0}$ to get that
$M=N(t_0,y_0)$ if and only if $\|\bar y^M_{t_0,y_0}(T)-z_d\|=r$.
This leads to (\ref{normcondition}) for this case.

Next, we prove   (\ref{normcondition}) for  the case where  $\|e^{(T-t_0)\Delta }y_0-z_d\|\leq r$. In this case, one can
easily check that $N(t_0,y_0)=0$,  the null control is the optimal
control to $(OP)^0_{t_0,y_0}$, and  $\bar
y^0_{t_0,y_0}(\cdot)=y(\cdot;0,t_0,y_0)=e^{(\cdot-t_0)\Delta}y_0$ over $[t_0,T]$.
Suppose that  $M=N(t_0,y_0)$. Then it holds that $M=0$ and
 $\|\bar
y^0_{t_0,y_0}(T)-z_d\|=\|e^{(T-t_0)\Delta}y_0-z_d\|$. These lead to the statement on the right hand side of (\ref{normcondition}). Conversely,
suppose that there is an $M_0\geq 0$ such that
\begin{eqnarray}\label{NORMA}
\|\bar y^{M_0}_{t_0,y_0}(T)-z_d\|=\|e^{(T-t_0)\Delta}y_0-z_d\|.
\end{eqnarray}
To show the statement on the left side of (\ref{normcondition}), it suffices to prove that $M_0=0$.
By the analogous version of Lemma \ref{lemma2.6} for
$(OP)^{M_0}_{t_0,y_0}$ (see (\ref{2.10})), it holds that
\begin{eqnarray}\label{NORMB}
\|\bar y^{M_0}_{t_0,y_0}(T)-z_d\|=r_{t_0,y_0}(M_0),
\end{eqnarray}
where $r_{t_0,y_0}(\cdot)$ corresponds to the map $M\rightarrow
r(0,M)$ given in Section 1, namely,
$$
r_{t_0,y_0}(M)= \inf\{ \|y(T; u,t_0,y_0)-z_d\| : u\in
B(0,M))\}, \; M\geq 0.
$$
Since the null control is the optimal control to $(OP)^0_{t_0,y_0}$,
we find that
$$
r_{t_0,y_0}(0)=\|y(T;0,t_0,y_0)-z_d\|=\|e^{(T-t_0)\Delta}y_0-z_d\|.
$$
Along with (\ref{NORMA}) and  (\ref{NORMB}), this indicates that
\begin{eqnarray}\label{NORMC}
r_{t_0,y_0}(0)=r_{t_0,y_0}(M_0).
\end{eqnarray}
By the analogous version of Lemma~\ref{lemma3.1} for
$(OP)^M_{t_0,y_0}$, the map $M\rightarrow r_{t_0,y_0}(M)$ is
strictly monotonically decreasing. This, together with
(\ref{NORMC}), yields that $M_0=0$.

 In summary, we conclude that
(\ref{normcondition}) stands.

Finally, we prove (\ref{op}). In the case that $\|e^{(T-t_0)}\Delta
y_0-z_d\|>r$,  according to the analogous version of
Proposition~\ref{theorem3.1} for Problem $(NP)_{t_0,y_0}$, the control
defined by (\ref{op})  is the unique optimal control of Problem
$(NP)_{t_0,y_0}$. In the case where  $\|e^{(T-t_0)}\Delta
y_0-z_d\|\leq r$, it is clear that $N(t_0,y_0)=0$ and the null
control is the optimal control to $(NP)_{t_0,y_0}$. Hence,
(\ref{op}) holds for this case. This completes the proof.

\end{proof}

The following result shows that the functional $N(\cdot,\cdot)$
holds the dynamic programming principle.

\begin{Lemma} \label{NDP}Let $(t_0,y_0)\in[0,T)\times L^2({\Omega})$.
Then it stands that
 \bb\label {DG2}
 \ds N(t_0,y_0)=N\biggr(s, ~\bar y^{N(t_0,y_0)}_{t_0,y_0}(s)\biggl) \; \mbox{for each} \;  s\in(t_0,T).
\ee \end{Lemma}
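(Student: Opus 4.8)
The plan is to obtain (\ref{DG2}) from Lemma~\ref{feedback-1}, which characterizes the optimal norm $N(t_0,y_0)$ through the terminal distance $\|\bar y^M_{t_0,y_0}(T)-z_d\|$, together with Lemma~\ref{DGC}, the dynamic programming principle for the two-point boundary value problem (\ref{tpbv}), and the strict monotonicity of $M\mapsto r_{t_0,y_0}(M)$ (the analogue of Lemma~\ref{lemma3.1} for $(OP)^M_{t_0,y_0}$). Fix $s\in(t_0,T)$, write $M=N(t_0,y_0)$ and $y^*(\cdot)=\bar y^M_{t_0,y_0}(\cdot)$; the goal is to prove $N(s,y^*(s))=M$.

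First I would record, using Lemma~\ref{DGC}, that $\bar y^M_{s,y^*(s)}(\cdot)=y^*(\cdot)$ on $[s,T]$, so in particular $\bar y^M_{s,y^*(s)}(T)=y^*(T)$. By the analogue of (\ref{2.10}) for $(OP)^M_{s,y^*(s)}$ this yields $r_{s,y^*(s)}(M)=\|\bar y^M_{s,y^*(s)}(T)-z_d\|=\|y^*(T)-z_d\|$, while Lemma~\ref{feedback-1} applied to the data $(t_0,y_0)$ gives
\begin{eqnarray*}
\|y^*(T)-z_d\|=r\wedge\|e^{(T-t_0)\triangle}y_0-z_d\|=r\wedge r_{t_0,y_0}(0),
\end{eqnarray*}
where we used $r_{t_0,y_0}(0)=\|e^{(T-t_0)\triangle}y_0-z_d\|$. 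On the other hand, Lemma~\ref{feedback-1} applied to the data $(s,y^*(s))$ shows that $N(s,y^*(s))=M$ is equivalent to $\|\bar y^M_{s,y^*(s)}(T)-z_d\|=r\wedge\|e^{(T-s)\triangle}y^*(s)-z_d\|$; since $\|e^{(T-s)\triangle}y^*(s)-z_d\|=r_{s,y^*(s)}(0)$, this is in turn equivalent, by the two identities just displayed, to
\begin{eqnarray*}
r\wedge r_{t_0,y_0}(0)=r\wedge r_{s,y^*(s)}(0).
\end{eqnarray*}
Thus everything reduces to proving this last identity.

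I would prove it by a case distinction. If $r_{t_0,y_0}(0)=\|e^{(T-t_0)\triangle}y_0-z_d\|\le r$, then (as in the proof of Lemma~\ref{feedback-1}) $M=N(t_0,y_0)=0$ and $y^*(\cdot)=e^{(\cdot-t_0)\triangle}y_0$, so by the semigroup property $r_{s,y^*(s)}(0)=\|e^{(T-s)\triangle}e^{(s-t_0)\triangle}y_0-z_d\|=r_{t_0,y_0}(0)\le r$, and both sides equal $r_{t_0,y_0}(0)$. If instead $r_{t_0,y_0}(0)>r$, then the null control does not steer $(t_0,y_0)$ into $B(z_d,r)$, so $M=N(t_0,y_0)>0$; from the first step $r_{s,y^*(s)}(M)=r\wedge r_{t_0,y_0}(0)=r$, whence, by the strict monotonicity of $M\mapsto r_{s,y^*(s)}(M)$ and $M>0$, we get $r_{s,y^*(s)}(0)>r_{s,y^*(s)}(M)=r$, so both sides equal $r$. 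This proves the identity, hence (\ref{DG2}).

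I do not foresee a real obstacle: once Lemma~\ref{feedback-1} and Lemma~\ref{DGC} are available, the argument is essentially bookkeeping. The one point that needs care is the case $r_{t_0,y_0}(0)>r$, where the strict decrease of $M\mapsto r_{s,y^*(s)}(M)$ must be used to upgrade $r_{s,y^*(s)}(M)=r$ to $r_{s,y^*(s)}(0)>r$, which is exactly what makes the truncation ``$\wedge r$'' transparent; and one should be careful to apply the ``$\Rightarrow$'' direction of Lemma~\ref{feedback-1} at $(t_0,y_0)$ and its ``$\Leftarrow$'' direction at $(s,y^*(s))$.
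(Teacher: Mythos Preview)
Your proof is correct and follows the same overall scaffolding as the paper's: a case split on whether $e^{(T-t_0)\triangle}y_0\in B(z_d,r)$, combined with Lemma~\ref{DGC} and the characterization (\ref{normcondition}) from Lemma~\ref{feedback-1}. The first case is handled identically.

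The genuine difference lies in the second case, where one must show $r_{s,y^*(s)}(0)>r$ (equivalently, $e^{(T-s)\triangle}y^*(s)\notin B(z_d,r)$). The paper argues by contradiction: it builds a competitor control $\hat u$ equal to the optimal control on $(t_0,\hat s)$ and zero afterwards, shows $\hat u$ would then be optimal for $(NP)_{t_0,y_0}$, and derives a contradiction from the bang-bang property of $(NP)_{t_0,y_0}$. You instead observe that $r_{s,y^*(s)}(M)=r$ (from Lemma~\ref{DGC} and $\|y^*(T)-z_d\|=r$) and invoke the strict monotonicity of $M'\mapsto r_{s,y^*(s)}(M')$ together with $M>0$ to get $r_{s,y^*(s)}(0)>r$ directly. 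Your route is shorter and avoids the explicit competitor construction; it trades the bang-bang property of the \emph{norm} problem for the already-packaged strict monotonicity of Lemma~\ref{lemma3.1} (whose proof, in turn, rests on the bang-bang property of the \emph{target} problem). Both are valid; yours is the more economical bookkeeping you anticipated.
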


\noindent{\it Proof.} In the case where %
$e^{(T-t_0)\triangle}y_0\in B(z_d,r)$,
it is clear that
 $$N(t_0,y_0)=0 \;\;\mbox{and}\;\;
 \bar y^{N(t_0,y_0)}_{t_0,y_0}(\cdot)=e^{(\cdot-t_0)\triangle}y_0.$$
Because
 $$e^{(T-s)\triangle}\bar y^{N(t_0,y_0)}_{t_0,y_0}(s)=e^{(T-s)\triangle}e^{(s-t_0)\triangle}y_0=e^{(T-t_0)\triangle}y_0\in B(z_d,r)\;\;\mbox{for each}\;\; s\in (t_0,T),$$
we see that  $N\bigr(s, ~\bar
 y^{N(t_0,y_0)}_{t_0,y_0}(s)\bigl)=0$. Therefore the equality (\ref{DG2}) holds for this case.

 \medskip

  In the case where  $e^{(T-t_0)\triangle}y_0\notin
 B(z_d,r)$, it is clear that  $r\wedge\left\|e^{(T-t_0)\triangle}y_0-z_d\right\|=r$. By
 the analogous version of
Proposition~\ref{theorem3.1} for Problem $(NP)_{t_0,y_0}$, it holds that $\|\bar
 y^{N(t_0,y_0)}_{t_0,y_0}(T)-z_d\|=r$. Thus,  it follows from (\ref{normcondition}) that
\begin{eqnarray}\label{DG21-a}
\bar y^{N(t_0,y_0)}_{t_0,y_0} (T)\in \partial B(z_d,r).
\end{eqnarray}
We  claim that
 \bb\label {DG21}
 e^{(T-s)\triangle}\bar y^{N(t_0,y_0)}_{t_0,y_0}(s)\notin B(z_d,r)\;\;\mbox{for all}\;\; s\in(t_0,T).
 \ee
If (\ref{DG21}) did not hold, then  there would  exist a $\hat s\in(t_0,T)$
such that
\begin{eqnarray}\label{HK1}
 e^{(T-\hat s)\triangle}\bar y^{N(t_0,y_0)}_{t_0,y_0}(\hat s)\notin B(z_d,r).
\end{eqnarray}
We construct a
control $\hat u$ by setting
\begin{eqnarray}\label{HK2}\hat u(s)=\ds \chi_{(t_0,\hat s)}(s) N(t_0,y_0)\frac{\chi_{\omega}\bar\psi^{N(t_0,y_0)}_{t_0,y_0}(s)}
{\|\chi_{\omega}\bar\psi^{N(t_0,y_0)}_{t_0,y_0}(s)\|},\;\; s\in [t_0,T).
\end{eqnarray}
Clearly, the solution $y(\cdot; \hat u, t_0,y_0)$ to (\ref{state}), where $u=\hat u$, coincides with
$\bar y^{N(t_0,y_0)}_{t_0,y_0} (\cdot)$ over $[t_0,\hat s]$. This, along with (\ref{HK2}) and (\ref{HK1}), indicates that
\begin{eqnarray}\label{HK3}
 y(T; \hat u,t_0,y_0)= e^{(T-\hat s)\triangle}y(\hat s;\hat u,t_0,y_0)\in B(z_d,r).
\end{eqnarray}
On the other hand, it follows from (\ref{HK2}) that $\ds\|\hat u||_{L^{\infty}(t_0,T;L^2(\Omega))}=N(t_0,y_0).$
This, together with (\ref{HK3}), yields that
  $\hat u$ is the optimal control to $(NP)_{t_0,y_0}$.
 However,  the problem $(NP)_{t_0,y_0}$ holds  the bang-bang
property (it follows from the analogous version of  Proposition \ref{theorem3.3} for $(NP)_{t_0,y_0}$). This implies that   $\|\hat u(s)\|=N(t_0,y_0)$ for a.e. $s\in (t_0,T)$, which
contradicts to the structure of $\hat u$. Hence, (\ref{DG21})
stands.

 Next,  by (\ref{DG1}) and (\ref{DG21-a}), we see
that
$$\ds\bar y^{N(t_0,y_0)}_{s,\bar y^{N(t_0,y_0)}_{t_0,y_0}(s)}(T)
=\bar y^{N(t_0,y_0)}_{t_0,y_0} (T)\in \partial B(z_d,r)\;\mbox{for each}\; s\in (t_0,T).$$
This, together with (\ref{DG21}), implies that
\begin{eqnarray}\label{HK4}
\|\ds\bar y^{N(t_0,y_0)}_{s,\bar y^{N(t_0,y_0)}_{t_0,y_0}(s)}(T)-z_d\|=r\wedge \|e^{(T-s)\triangle}\bar y^{N(t_0,y_0)}_{t_0,y_0}(s)-z_d\|\; \mbox{for each}\; s\in (t_0,T).
\end{eqnarray}
Now, we arbitrarily fix a $s\in (t_0,T)$. By (\ref{HK4}), we can apply  (\ref{normcondition}), with $t_0=s$ and
$y_0=\bar y^{N(t_0,y_0)}_{t_0,y_0}(s)$, to get that
$$
N(t_0,y_0)=N(s, \bar y^{N(t_0,y_0)}_{t_0,y_0}(s)),
$$
which  gives
 the equality (\ref{DG2}) for the second case.
 In summary, we finish the proof.\endpf
\bigskip

\noindent{\bf Proof of Theorem~\ref{theorem3.9} (Part 1): The
existence.}  It follows from (\ref{feedback}) (the definition of $F$) that
\begin{eqnarray*}
\ds F(t,~\bar y^{N(t_0,y_0)}_{t_0,y_0}(t))=N(t,~\bar
y^{N(t_0,y_0)}_{t_0,y_0}(t)) \frac{\phi(t)}{\|\phi(t\|},\; t\in
(t_0,T),
\end{eqnarray*}
where $\phi(t)={\chi}_{\omega}\bar\psi^{N(t,~\bar
y^{N(t_0,y_0)}_{t_0,y_0}(t))}_{t,~\bar
y^{N(t_0,y_0)}_{t_0,y_0}(t)}(t )$. This, together with (\ref{DG2})
and (\ref{DG1}), yields that
$$\ds F(t,~\bar y^{N(t_0,y_0)}_{t_0,y_0}(t))
=N(t_0,y_0)\frac {{\chi}_{\omega}\bar\psi^{N(t_0,y_0)}_{t_0,y_0}(t)}
{\|{\chi}_{\omega}\bar\psi^{N(t_0,y_0)}_{t_0,y_0}(t)\|}
=\frac{d}{dt}\bar y^{N(t_0,y_0)}_{t_0,y_0}(t)
+A \bar y^{N(t_0,y_0)}_{t_0,y_0}(t),\; t\in (t_0,T).$$
Here, we used that $\chi_\omega\circ\chi_\omega=\chi_\omega$. From the above equality and the fact that $\bar y^{N(t_0,y_0)}_{t_0,y_0}(t_0)=y_0$ , it follows that $\bar y^{N(t_0,y_0)}_{t_0,y_0}(\cdot)$ is a
solution to (\ref{feedbacksystem}). This completes the proof. \endpf

\bigskip

\subsection{ Proof of Theorem~\ref{theorem3.9} (Part 2): The uniqueness}

$\;\;\;\;$The key to prove the uniqueness is showing the following properties of
the feedback law $F(\cdot,\cdot)$.
\begin{Proposition}\label{LIPOFF} $(i)$  For each pair   $ (\bar t_0,\bar y_0 )\in[0,T)\times L^2(\Omega)
$, there is a $\bar \rho>0$ such that $F(t_0,\cdot)$ is Lipschitz
continuous in $B(\bar y_0,\bar \rho)$  uniformly with respect to
$t_0\in [(\bar t_0-\bar\rho)^+,\bar t_0+\bar\rho]$. $(ii)$ For each
 $\bar y_0\in L^2(\om)$, $F(\cdot\,,\bar y_0)$ is continuous
over $[0,T)$.\end{Proposition}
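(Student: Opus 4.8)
The plan is to view $F(t_0,y_0)=N(t_0,y_0)\,\Phi\big(\chi_\omega\bar\psi^{N(t_0,y_0)}_{t_0,y_0}(t_0)\big)$ with $\Phi(v)=v/\|v\|$, and to isolate three ingredients: (A) $y_0\mapsto N(t_0,y_0)$ is locally Lipschitz near $\bar y_0$, uniformly for $t_0$ in a compact subinterval of $[0,T)$, and $t_0\mapsto N(t_0,\bar y_0)$ is continuous; (B) $(M,t_0,y_0)\mapsto\chi_\omega\bar\psi^M_{t_0,y_0}(t_0)$ is continuous, and, with $t_0$ frozen, locally Lipschitz in $(M,y_0)$ uniformly in $t_0$; (C) $\inf\|\chi_\omega\bar\psi^{N(t_0,y_0)}_{t_0,y_0}(t_0)\|>0$ on a small neighbourhood of $(\bar t_0,\bar y_0)$. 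Both assertions then come out by composing these, using that $\Phi$ is Lipschitz off every ball around the origin and that $\|F(t_0,y_0)\|=N(t_0,y_0)$ (the fraction being a unit vector), which absorbs the cross terms. Before anything I would fix $\bar\rho<T-\bar t_0$ and dispose of the degenerate case: if $\|e^{(T-\bar t_0)\triangle}\bar y_0-z_d\|<r$, then after shrinking $\bar\rho$ one has $\|e^{(T-t_0)\triangle}y_0-z_d\|<r$ on the whole box, so $N\equiv 0$ and $F\equiv 0$ there and both claims are trivial; hence I may assume $\|e^{(T-\bar t_0)\triangle}\bar y_0-z_d\|\ge r$, whence by Lemma~\ref{feedback-1}, $\|\bar\psi^{N(t_0,y_0)}_{t_0,y_0}(T)\|=r\wedge\|e^{(T-t_0)\triangle}y_0-z_d\|\ge r/2$ near $(\bar t_0,\bar y_0)$.

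For (A): to estimate $|N(t_0,y_0)-N(t_0,y_0')|$ I take an optimal control $u$ for $(NP)_{t_0,y_0}$ and superpose a correction $w$, supported on $(t_0,T)$, steering $0$ to $e^{(T-t_0)\triangle}(y_0-y_0')$ (equivalently, steering $-(y_0-y_0')$ to $0$ by null-controllability); since $t_0$ stays in a compact subset of $[0,T)$, the null-controllability cost over $(t_0,T)$ is bounded by some $C$, so $\|u+w\|_{L^\infty}\le N(t_0,y_0)+C\|y_0-y_0'\|$ and $y(T;u+w,t_0,y_0')\in B(z_d,r)$, giving $N(t_0,y_0')\le N(t_0,y_0)+C\|y_0-y_0'\|$; symmetry finishes the Lipschitz bound, uniformly in $t_0$. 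A standard controllability argument also yields local boundedness of $N$, and then continuity of $t_0\mapsto N(t_0,\bar y_0)$ follows from the dynamic programming identity of Lemma~\ref{NDP} (taking the intermediate time equal to the nearby instant, in both orders) together with the state-Lipschitz bound just proved and the $L^2$-continuity of the mild solutions in the terminal instant. In particular $\bar M:=\sup\{N(t_0,y_0):t_0\in[(\bar t_0-\bar\rho)^+,\bar t_0+\bar\rho],\ y_0\in B(\bar y_0,\bar\rho)\}<\infty$.

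For (C): the representation $\bar y^{N}_{t_0,y_0}(T)=e^{(T-t_0)\triangle}y_0+\int_{t_0}^T e^{(T-s)\triangle}\chi_\omega\bar u^{N}_{t_0,y_0}(s)\,ds$ shows the terminal states lie in a precompact set — the first term sits in $e^{\delta_0\triangle}(\text{bounded set})$ with $\delta_0=T-\bar t_0-\bar\rho>0$, hence precompact by the smoothing of $\{e^{t\triangle}\}$ and the compact embedding $H^1_0\cap H^2\hookrightarrow L^2$, while the integral term is precompact too (split at $T-\varepsilon$: the head is smoothed by $\ge\varepsilon$, the tail has norm $\le\bar M\varepsilon$). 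Thus the closure $K$ of $\{\bar\psi^{N}_{t_0,y_0}(T)=-(\bar y^{N}_{t_0,y_0}(T)-z_d)\}$ over the box is compact, and $0\notin K$ by the end of the first paragraph; since $(t_0,\phi)\mapsto\|\chi_\omega e^{(T-t_0)\triangle}\phi\|$ is continuous and, by the unique continuation property \cite{FHL} (used only at times $<T$), strictly positive on the compact set $[(\bar t_0-\bar\rho)^+,\bar t_0+\bar\rho]\times K$, its minimum $c>0$ is the required lower bound, because $\bar\psi^{N}_{t_0,y_0}(t_0)=e^{(T-t_0)\triangle}\bar\psi^{N}_{t_0,y_0}(T)$.

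For (B) I would get continuity by the compactness scheme: $\|\bar y^M_{t_0,y_0}(T)\|\le\|y_0\|+M(T-t_0)$ and $\bar\psi^M_{t_0,y_0}(\cdot)=e^{(T-\cdot)\triangle}\bar\psi^M_{t_0,y_0}(T)$ give uniform bounds; along parameters converging to $(M,t_0,y_0)$ I extract weak-$*$ limits of the (uniformly bounded, bang-bang) controls and strong $C([\,\cdot\,,T];L^2)$ limits of the states and costates (using Lemma~\ref{DGC} to reconcile the moving left endpoints), identify the limit as a solution of (\ref{tpbv}) — the passage to the limit in $\chi_\omega\psi/\|\chi_\omega\psi\|$ being legitimate since the limiting $\bar\psi(T)=-(\bar y(T)-z_d)\neq0$ by (\ref{1.2}), so $\|\chi_\omega\bar\psi(s)\|>0$ on $[\,\cdot\,,T)$ by \cite{FHL} — and conclude by the uniqueness in the analogue of Lemma~\ref{lemma2.6}; evaluating at $t_0<T$ gives continuity of $(M,t_0,y_0)\mapsto\chi_\omega\bar\psi^M_{t_0,y_0}(t_0)$. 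For the Lipschitz part it suffices, via $\bar\psi^M_{t_0,y_0}(t_0)=e^{(T-t_0)\triangle}\big(-(\bar y^M_{t_0,y_0}(T)-z_d)\big)$, to show $(M,y_0)\mapsto\bar y^M_{t_0,y_0}(T)$ is Lipschitz uniformly in $t_0$; writing the defining fixed-point relation for two parameter pairs, subtracting, and splitting the time integral at $T-\delta$, on $[t_0,T-\delta]$ the normalization denominators stay bounded below (again because the terminal costates range over a fixed compact set away from $0$), which, after a Gronwall argument, gives a bound $C_\delta\big(|M_1-M_2|+\|y_0^1-y_0^2\|\big)+C\bar M\delta$, the $O(\delta)$ coming from $[T-\delta,T]$; a suitable choice of $\delta$ (the terminal-costate set being fixed) closes the estimate. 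Then I assemble: with $v:=\chi_\omega\bar\psi^{N(t_0,y_0)}_{t_0,y_0}(t_0)$, $v':=\chi_\omega\bar\psi^{N(t_0,y_0')}_{t_0,y_0'}(t_0)$,
$$\|F(t_0,y_0)-F(t_0,y_0')\|\le|N(t_0,y_0)-N(t_0,y_0')|+\tfrac{2\bar M}{c}\,\|v-v'\|,$$
and $\|v-v'\|\le\|\bar\psi^{N(t_0,y_0)}_{t_0,y_0}(t_0)-\bar\psi^{N(t_0,y_0')}_{t_0,y_0'}(t_0)\|\le C\big(|N(t_0,y_0)-N(t_0,y_0')|+\|y_0-y_0'\|\big)$ by (B) and (A); this gives (i), and (ii) follows the same way from (A), (B), (C). The main obstacle is the Lipschitz half of (B) — the quantitative stability of (\ref{tpbv}) in its data, where $\chi_\omega\psi/\|\chi_\omega\psi\|$ can have arbitrarily large local modulus of continuity as $s\uparrow T$ when $\chi_\omega\bar\psi(T)$ is small; keeping the Gronwall constant on $[t_0,T-\delta]$ under control while the crude bound $\bar M\delta$ on the terminal slice is absorbed — perhaps with recourse to quantitative unique continuation — is the delicate point.
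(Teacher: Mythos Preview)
Your decomposition into (A), (B), (C) matches the paper's, and your arguments for (A) and (C) are correct though different from the paper's: for (A) you use a null-controllability correction to get the Lipschitz bound directly, whereas the paper proves $N(t_0,\cdot)$ is convex (Lemma~\ref{convex}) and bounded (Lemma~\ref{M-bounded}) and then invokes the standard fact that bounded convex functions are locally Lipschitz; for (C) you use a compactness argument on terminal costates plus unique continuation on a compact set, whereas the paper proves continuity of $(t_0,y_0,M)\mapsto\bar\psi^M_{t_0,y_0}(t_0)$ by explicit chains of estimates. Both of your alternatives are clean and arguably shorter.

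The genuine gap is exactly the one you flag in (B): your Gronwall-plus-terminal-slice scheme cannot close to a Lipschitz estimate. The constant $C_\delta$ coming from Gronwall on $[t_0,T-\delta]$ depends on a lower bound for $\|\chi_\omega\bar\psi(s)\|$ on that interval, which may deteriorate as $\delta\to0$ (no quantitative unique continuation from a measurable set is assumed here), so minimizing $C_\delta\cdot\mathrm{diff}+\bar M\delta$ over $\delta$ does not yield a linear bound in $\mathrm{diff}$. The paper bypasses this entirely with a variational (monotonicity) argument that you are missing: for $i=1,2$, first-order optimality of $\bar u^i$ for $(OP)^{M_i}_{t_0,y_0^i}$, tested against the convex perturbation $(1-\varepsilon)\bar u^i+\varepsilon\frac{M_i}{M_j}\bar u^j$, gives
\[
\Big\langle \bar y^i(T)-z_d,\ \int_{t_0}^T e^{(T-s)\triangle}\Big[\tfrac{M_i}{M_j}\bar u^j-\bar u^i\Big]ds\Big\rangle\ge0.
\]
Dividing by $M_i^2$ and adding the two inequalities yields, with $\beta^i:=\int_{t_0}^T e^{(T-s)\triangle}\bar u^i\,ds$ and $\alpha^i:=e^{(T-t_0)\triangle}y_0^i-z_d$,
\[
\Big\|\tfrac{\beta^1}{M_1}-\tfrac{\beta^2}{M_2}\Big\|\le\Big\|\tfrac{\alpha^1}{M_1}-\tfrac{\alpha^2}{M_2}\Big\|,
\]
hence
\[
\Big\|\tfrac{\bar\psi^1(t_0)}{M_1}-\tfrac{\bar\psi^2(t_0)}{M_2}\Big\|\le\Big\|\tfrac{\bar\psi^1(T)}{M_1}-\tfrac{\bar\psi^2(T)}{M_2}\Big\|\le 2\Big\|\tfrac{\alpha^1}{M_1}-\tfrac{\alpha^2}{M_2}\Big\|\le\tfrac{2}{M_1}\|y_0^1-y_0^2\|+2(\|y_0^2\|+\|z_d\|)\tfrac{|M_1-M_2|}{M_1M_2}.
\]
This is the key estimate (\ref{estimate3}); plugging it into your formula $\|F-F'\|\le|M_1-M_2|+\frac{2M_1}{c}\|\chi_\omega\bar\psi^1(t_0)-\chi_\omega\bar\psi^2(t_0)\|$ (after writing the latter in terms of the normalized difference) gives (\ref{estimate2}) and closes (B) with no time-splitting and no quantitative unique continuation. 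The trick is to normalize by $M_i$ \emph{before} comparing, which converts the problem into comparing minimizers of a common strictly convex functional over the unit ball and exploits the resulting monotonicity.
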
 \noindent When it is proved, {\bf the
uniqueness of the solution to Equation (\ref{feedbacksystem})
follows immediately from the  generalized Picard-Lindelof Theorem
(see \cite{Zeidler}) and Proposition~\ref{LIPOFF}. Consequently, the
proof of Theorem~\ref{theorem3.9} is completed. }\\

The remainder is showing Proposition~\ref{LIPOFF}. To serve such purpose,
we first study some continuity properties of
$N(\cdot,\cdot)$. These properties will be concluded in Lemma~\ref{Lip1}.
Two lemmas before it will play important roles in its proof.

\begin{Lemma} \label{convex} For each $t_0\in [0,T)$, the functional  $N(t_0,\cdot)$ is
convex over $L^2(\om)$. \end{Lemma}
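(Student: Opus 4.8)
The plan is to exploit three ingredients: the affine-linearity of the state equation in the pair (control, initial datum), the convexity of the target ball $B(z_d,r)$, and the triangle inequality for the norm of $L^\infty(t_0,T;L^2(\om))$. Fix $t_0\in[0,T)$, take $y_0,y_1\in L^2(\om)$ and $\lambda\in[0,1]$, and set $y_\lambda=\lambda y_0+(1-\lambda)y_1$. The goal is $N(t_0,y_\lambda)\le \lambda N(t_0,y_0)+(1-\lambda)N(t_0,y_1)$, which is exactly convexity of $N(t_0,\cdot)$.

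First I would choose, for $i=0,1$, an optimal control $u_i$ for $(NP)_{t_0,y_i}$, so that $\|u_i\|_{L^\infty(t_0,T;L^2(\om))}=N(t_0,y_i)$ and $y(T;u_i,t_0,y_i)\in B(z_d,r)$; existence of such controls is the analogue for $(NP)_{t_0,y_i}$ of the existence result used repeatedly in Section 2 (see \cite{FPZ}). (If one prefers to avoid invoking existence, one can instead take $\varepsilon$-optimal controls and pass to the limit $\varepsilon\to0$ at the very end; nothing else changes.) Then I would set $u_\lambda=\lambda u_0+(1-\lambda)u_1$. Since $y(t;u,t_0,\eta)=e^{(t-t_0)\triangle}\eta+\int_{t_0}^t e^{(t-s)\triangle}\chi_\omega u(s)\,ds$ is affine-linear in $(u,\eta)$, one gets $y(\cdot;u_\lambda,t_0,y_\lambda)=\lambda\,y(\cdot;u_0,t_0,y_0)+(1-\lambda)\,y(\cdot;u_1,t_0,y_1)$; evaluating at $t=T$ and using convexity of $B(z_d,r)$ shows $y(T;u_\lambda,t_0,y_\lambda)\in B(z_d,r)$, i.e.\ $u_\lambda$ is admissible for $(NP)_{t_0,y_\lambda}$. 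Finally, applying $\|\lambda u_0(t)+(1-\lambda)u_1(t)\|\le\lambda\|u_0(t)\|+(1-\lambda)\|u_1(t)\|$ pointwise a.e.\ in $t$ and taking the essential supremum yields $\|u_\lambda\|_{L^\infty(t_0,T;L^2(\om))}\le\lambda N(t_0,y_0)+(1-\lambda)N(t_0,y_1)$, and the definition of $N(t_0,\cdot)$ as an infimum over admissible controls gives the claimed inequality.

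I do not expect a genuine obstacle: this is the standard fact that the optimal value of a minimization problem with linear constraints and a convex objective is convex in the data. The only points deserving a word of care are the nonemptiness of the admissible set for each datum (guaranteed by approximate controllability of the heat equation, exactly as in Step 1 of the proof of Lemma~\ref{lemma3.1}) and, if existence of optimal controls is not taken for granted, the harmless $\varepsilon\to0$ limiting argument mentioned above.
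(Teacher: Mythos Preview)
Your proof is correct and follows essentially the same approach as the paper's: pick optimal controls for the two initial data, take their convex combination, use the affine-linearity of the solution map to land in $B(z_d,r)$, and bound the $L^\infty$ norm by the triangle inequality. The only cosmetic difference is that the paper directly invokes existing optimal controls without the optional $\varepsilon$-approximation remark.
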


\noindent{\it Proof.} Let $y_0^1$ and $y_0^2$ belong to $L^2(\Omega)$. The optimal
controls $\bar u^i$ to $(NP)_{t_0,y_0^i}$, $i=1,2$, satisfy that
$N(t_0,y_0^i)=\|\bar u^i\|_{L^\infty(t_0,T;L^2(\om))}$ and $y(T;\bar u^i,t_0,y_0^i)\in B(z_d,r)$, $ i=1,2.$
Since  for each $ \l\in(0,1)$,
$$y(T;\l \bar u^1+(1-\l)\bar u^2,t_0,\l y_0^1+(1-\l)y_0^2)=\l y(T;\bar
u^1,t_0,y_0^1)+(1-\l)y(T;\bar u^2,t_0,y_0^2)\in B(z_d,r),$$
we obtain that  $$\ba{l }\ns
N(t_0,\l y_0^1+(1-\l)y_0^2)
\le\|\l \bar u^1+(1-\l)\bar u^2\|_{L^\infty(t_0,T;L^2(\om))}\\
\ns\le \l\|\bar u^1\|_{L^\infty(t_0,T;L^2(\om))}+(1-\l)\|\bar u^2\|_{L^\infty(t_0,T;L^2(\om))}\\
\ns= \l N(t_0,y_0^1)+(1-\l)N(t_0,y_0^2).\ea.$$
This completes the proof.\endpf
\bigskip

\begin{Lemma} \label{M-bounded} For each  $\bar t_0\in[0,T)$ and each bounded subset  $E$ of $L^2(\Omega)$, the functional  $N(\cdot,\cdot)$ is bounded
on $\Bigr[(\bar t_0-\bar \d)^+,\bar t_0+\bar\d\Bigl]\times E$, where
$\bar\d=(T-\bar t_0)/2$.\end{Lemma}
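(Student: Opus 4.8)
The plan is to exhibit, uniformly over $(t_0,y_0)$ in the rectangle $\mathcal Q:=[(\bar t_0-\bar\d)^+,\bar t_0+\bar\d]\times E$, an admissible control for $(NP)_{t_0,y_0}$ whose $L^\infty(t_0,T;L^2(\om))$-norm is dominated by one constant $M^*$; since $N(t_0,y_0)$ is, by definition, the infimum of such norms, this will yield the boundedness. The control will consist of a passive phase (zero control) on $[t_0,t_1]$, with $t_1:=\bar t_0+\bar\d$, followed by an active phase on $[t_1,T]$; note $T-t_1=\bar\d>0$ and $t_0\le t_1$ throughout $\mathcal Q$.

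First I would isolate the effect of the passive phase. Put $R:=\sup_{y_0\in E}\|y_0\|<\infty$. Since $\langle Ay,y\rangle=-\|\nabla y\|^2\le0$, the semigroup $\{e^{t\triangle}\}$ is contractive on $L^2(\om)$, so for $(t_0,y_0)\in\mathcal Q$ the use of zero control on all of $[t_0,T]$ produces the final state $e^{(T-t_0)\triangle}y_0=e^{(T-t_1)\triangle}\bigl(e^{(t_1-t_0)\triangle}y_0\bigr)$ with $\|e^{(t_1-t_0)\triangle}y_0\|\le R$. Hence all such free final states lie in $K:=\overline{e^{(T-t_1)\triangle}B(0,R)}$, and the crucial observation is that $K$ is compact: as $\om$ is bounded with smooth boundary, $A$ has compact resolvent, so $e^{t\triangle}$ is a compact operator for every $t>0$, and here $T-t_1=\bar\d>0$.

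Next I would build the active phase uniformly over $K$ by a finite covering. For $v\in L^\infty(t_1,T;L^2(\om))$ and a state whose free evolution reaches $\zeta\in K$ at time $T$, linearity of Equation~(\ref{state}) gives that the actual final state equals $\zeta+y(T;v,t_1,0)$, so steering into $B(z_d,r)$ amounts to $\|y(T;v,t_1,0)-(z_d-\zeta)\|\le r$. By the approximate controllability of the heat equation with control supported in $\omega$ (\cite{FPZ}), the set $\{y(T;v,t_1,0):v\in L^\infty(t_1,T;L^2(\om))\}$ is dense in $L^2(\om)$, so for each $\zeta\in K$ there is $v_\zeta$ with $\|y(T;v_\zeta,t_1,0)-(z_d-\zeta)\|<r/2$; this same $v_\zeta$ then satisfies $\|y(T;v_\zeta,t_1,0)-(z_d-\zeta')\|<r$ whenever $\|\zeta'-\zeta\|<r/2$. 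The balls $\{B(\zeta,r/2):\zeta\in K\}$ cover the compact set $K$, so finitely many, say centered at $\zeta_1,\dots,\zeta_N$, suffice; set $M^*:=\max_{1\le i\le N}\|v_{\zeta_i}\|_{L^\infty(t_1,T;L^2(\om))}$.

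Finally I would assemble the two phases: given $(t_0,y_0)\in\mathcal Q$, let $\zeta:=e^{(T-t_0)\triangle}y_0\in K$, pick $i$ with $\|\zeta-\zeta_i\|<r/2$, and define $u:=0$ on $[t_0,t_1)$ and $u:=v_{\zeta_i}$ on $[t_1,T)$; then $y(T;u,t_0,y_0)=\zeta+y(T;v_{\zeta_i},t_1,0)\in B(z_d,r)$, so $u$ is admissible for $(NP)_{t_0,y_0}$ while $\|u\|_{L^\infty(t_0,T;L^2(\om))}=\|v_{\zeta_i}\|_{L^\infty(t_1,T;L^2(\om))}\le M^*$, whence $N(t_0,y_0)\le M^*$ on $\mathcal Q$. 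The one genuinely substantive ingredient is the compactness of $K$, i.e.\ the smoothing of the heat semigroup over the fixed positive time lapse $\bar\d$; it is precisely this that converts the density supplied by approximate controllability into a finite, hence uniformly bounded, family of active-phase controls, the remaining steps being routine manipulations of the variation-of-constants formula.
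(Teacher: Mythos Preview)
Your proof is correct, but it takes a genuinely different route from the paper's.

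The paper splits $[t_0,T]$ at the fixed time $\bar t_0+3\bar\d/2$ and proceeds in two active phases: on $(t_0,\bar t_0+3\bar\d/2)$ it invokes the null controllability of the heat equation (with the observability constant uniform over intervals of length $\ge\bar\d/2$) to drive any $y_0\in E$ to the origin at cost $\le C_1\|y_0\|\le C_1C_E$; then on $(\bar t_0+3\bar\d/2,T)$ a single fixed approximate-controllability control $u_2$, independent of $(t_0,y_0)$, steers $0$ into $B(z_d,r)$. The bound is $\max\{C_1C_E,C_2\}$.

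Your argument replaces the null-controllability phase by a passive (zero-control) phase up to $t_1=\bar t_0+\bar\d$, and then compensates for the fact that the states at time $t_1$ are no longer a single point by observing that the corresponding free final states at time $T$ form a \emph{compact} set $K$, thanks to the smoothing of $e^{\bar\d\triangle}$. Compactness plus approximate controllability yields, via a finite cover of $K$ by $r/2$-balls, a uniform bound $M^*$ over finitely many active-phase controls.

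What each approach buys: the paper's proof is more quantitative (the bound is explicit in $C_E$ and the observability constant) and uses a single approximate-controllability control, but it relies on the deep null-controllability estimate. Your proof needs only approximate controllability and the compactness of the semigroup, so it is more elementary on the control-theoretic side and would transfer to settings where null controllability fails but the semigroup is compact; the price is that the bound $M^*$ is non-constructive, coming from a finite subcover.
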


\noindent{\it Proof.} Write $C_E=\sup\{\|y_0\| : y_0\in E \}$. Let
$(t_0,y_0)\in \Bigr[(\bar t_0-\bar \d)^+,\bar t_0+\bar\d\Bigl]\times
E$. By the null controllability of the heat equation over $(t_0,
\bar t_0+3\bar\d/2)$ (see, for instance, \cite{EZ}), there is a
control $u_1$ with
\bb\label{estimate}
 \| u_1\|_{L^\infty(t_0, \bar t_0+3\bar\d/2; L^2(\om))}\le C_1\|y_0\|\leq C_1C_E,\ee
where $C_1>0$ is independent of $t_0$ and $y_0$, such that
$$ \bar y
\equiv y(\bar t_0+3\bar\d/2; u_1,t_0,y_0)=0.$$ Here we used that
$t_0\leq\bar t_0+\bar \delta$. Then, by the approximate
controllability of the heat equation over $(\bar t_0+3\bar\d/2, T)$
(see, for instance, \cite{FPZ}), there is another control $u_2$ with
$$
 \| u_2\|_{L^\infty(\bar t_0+3\bar\d/2, T ; L^2(\om))}\le C_2,
 $$
where $C_2>0$ is independent of $t_0$ and $y_0$, such that $$y(T;
u_2, \bar t_0+3\bar\d/2, \bar y)\in B(z_d,r).$$ Clearly, the control
 $v\equiv\chi_{(t_0, \bar t_0+3\bar\d/2)}u_1+\chi_{(\bar t_0+3\bar\d/2,
T)}u_2$ satisfies that $y(T;v,t_0,y_0)\in B(z_d,r)$. Therefore, it
holds that
$$
N(t_0,y_0)\leq \|v\|_{L^\infty(t_0,T;L^2(\om))}\leq \max\{ C_1C_E,
C_2\}.
$$
This completes the proof.
\endpf

\begin{Lemma} \label{Lip1} $(i)$ For each    $(\bar t_0, \bar y_0)\in[0,T)\times L^2(\om)$ and each $\rho\in(0,1/2)$,  $N(t_0,\cdot)$ is
Lipschitz continuous over $B(\bar y_0,\rho)$ uniformly w.r.t.
$t_0\in\Bigr[(\bar t_0-\bar\d)^+,\bar t_0+\bar \d\Bigl]$, where
$\bar\d=(T-\bar t_0)/2$; $(ii)$ For each $\bar y_0\in L^2(\om)$,
$N(\cdot,\bar y_0)$ is continuous over $[0,T)$.\end{Lemma}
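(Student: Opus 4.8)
The plan is to deduce the Lipschitz continuity in $(i)$ from the convexity of $N(t_0,\cdot)$ (Lemma~\ref{convex}) together with its local boundedness (Lemma~\ref{M-bounded}), using the classical fact that a finite convex function that is bounded above on a neighborhood of a point is locally Lipschitz there, with a Lipschitz constant controlled by the oscillation bound and the radius. More precisely, fix $(\bar t_0,\bar y_0)$ and $\rho\in(0,1/2)$, and set $\bar\d=(T-\bar t_0)/2$. Apply Lemma~\ref{M-bounded} to the bounded set $E=B(\bar y_0,2\rho)$ to obtain a constant $C>0$ with $0\le N(t_0,y_0)\le C$ for all $(t_0,y_0)\in[(\bar t_0-\bar\d)^+,\bar t_0+\bar\d]\times B(\bar y_0,2\rho)$. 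Then, for each fixed such $t_0$, the function $y_0\mapsto N(t_0,y_0)$ is convex on $L^2(\om)$ and bounded by $C$ on $B(\bar y_0,2\rho)$; the standard convexity estimate gives that it is Lipschitz on $B(\bar y_0,\rho)$ with constant at most $2C/\rho$, which is independent of $t_0$. This yields $(i)$. (If one prefers to avoid quoting the infinite-dimensional version of the convex-Lipschitz lemma, it can be reproved in two lines: for $y_1,y_2\in B(\bar y_0,\rho)$ with $y_1\neq y_2$, set $z=y_2+\rho\frac{y_2-y_1}{\|y_2-y_1\|}\in B(\bar y_0,2\rho)$ and write $y_2$ as a convex combination of $y_1$ and $z$; convexity then bounds $N(t_0,y_2)-N(t_0,y_1)$ by $\frac{\|y_2-y_1\|}{\rho}\bigl(N(t_0,z)-N(t_0,y_1)\bigr)\le \frac{2C}{\rho}\|y_2-y_1\|$, and symmetrize.)

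For $(ii)$, fix $\bar y_0$ and let $t_0\to\bar t_0$ in $[0,T)$. The plan is to combine the equicontinuity in the space variable just obtained in $(i)$ with separate continuity of $t_0\mapsto N(t_0,\bar y_0)$; by a standard $3\e$ argument the joint/separate continuity in $t$ follows once we know $N(t_n,\bar y_0)\to N(\bar t_0,\bar y_0)$ along sequences. To see the latter, use the characterization in Lemma~\ref{feedback-1}: $N(t_0,\bar y_0)$ is the unique $M\ge0$ with $\|\bar y^M_{t_0,\bar y_0}(T)-z_d\|=r\wedge\|e^{(T-t_0)\triangle}\bar y_0-z_d\|$. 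As $t_n\to\bar t_0$, the right-hand side converges (continuity of $t\mapsto e^{(T-t)\triangle}\bar y_0$ in $L^2$). For the left-hand side, take $M_n=N(t_n,\bar y_0)$; these are bounded by Lemma~\ref{M-bounded}, so along a subsequence $M_n\to M_\infty$, and a continuous-dependence argument for the two-point boundary value problem (\ref{tpbv}) — passing to the limit exactly as in the proof of Lemma~\ref{lemma3.6}, Step~2, using the unique continuation bound (\ref{uniqueFHL}) so that the nonlinear term $M_n\chi_\omega\psi/\|\chi_\omega\psi\|$ is well behaved — gives $\bar y^{M_n}_{t_n,\bar y_0}\to \bar y^{M_\infty}_{\bar t_0,\bar y_0}$ in $C([0,T];L^2(\om))$. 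Passing to the limit in the defining equation identifies $M_\infty$ as the unique solution of the characterization at $(\bar t_0,\bar y_0)$, i.e. $M_\infty=N(\bar t_0,\bar y_0)$; since the limit is independent of the subsequence, the whole sequence converges.

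The main obstacle is the limiting argument for the two-point boundary value problem (\ref{tpbv}) when $t_0$ varies, in particular keeping the denominator $\|\chi_\omega\bar\psi^{M_n}_{t_n,\bar y_0}(t)\|$ away from zero uniformly on compact subsets of the (shrinking/shifting) time interval, so that the right-hand side converges strongly; this is where (\ref{uniqueFHL}) and the unique continuation property of the adjoint heat equation are essential, and where some care is needed because the initial time of the system itself moves. The case distinction coming from the "$\wedge$" in (\ref{normcondition}) — i.e. whether $e^{(T-t_0)\triangle}\bar y_0$ lies inside or outside $B(z_d,r)$ — must also be handled, but near a point where $\|e^{(T-\bar t_0)\triangle}\bar y_0-z_d\|\neq r$ one case is locally vacuous, and the borderline case $\|e^{(T-\bar t_0)\triangle}\bar y_0-z_d\|=r$ forces $N(\bar t_0,\bar y_0)=0$ and is handled directly by continuity of $e^{(T-t)\triangle}\bar y_0$. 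Everything else is routine.
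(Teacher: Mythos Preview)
Your argument for $(i)$ is correct and is essentially the paper's: both use convexity (Lemma~\ref{convex}) plus local boundedness (Lemma~\ref{M-bounded}) together with the standard one-dimensional slope comparison for convex functions; the only cosmetic difference is that you compare $B(\bar y_0,\rho)$ with the larger ball $B(\bar y_0,2\rho)$ while the paper uses $B(\bar y_0,1)$, producing Lipschitz constants $2C/\rho$ versus $C/(1-\rho)$.

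For $(ii)$ you take a genuinely different and considerably more laborious route. The paper never passes to the limit in the two-point boundary value problem. Instead it invokes the dynamic programming identity (\ref{DG2}) of Lemma~\ref{NDP}: for $t_0^1<t_0^2$ one has $N(t_0^1,\bar y_0)=N\bigl(t_0^2,\bar y^{N(t_0^1,\bar y_0)}_{t_0^1,\bar y_0}(t_0^2)\bigr)$, whence
\[
|N(t_0^1,\bar y_0)-N(t_0^2,\bar y_0)|
=\Bigl|N\bigl(t_0^2,\bar y^{N(t_0^1,\bar y_0)}_{t_0^1,\bar y_0}(t_0^2)\bigr)-N(t_0^2,\bar y_0)\Bigr|
\le C\,\bigl\|\bar y^{N(t_0^1,\bar y_0)}_{t_0^1,\bar y_0}(t_0^2)-\bar y_0\bigr\|,
\]
the last inequality being precisely the uniform space-Lipschitz bound from part $(i)$. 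The right-hand side is then estimated by $\bigl\|(e^{(t_0^2-t_0^1)\triangle}-I)\bar y_0\bigr\|+C|t_0^2-t_0^1|$ via the mild formula and the bound on the optimal control norm from Lemma~\ref{M-bounded}. This is a two-line argument once $(i)$ and (\ref{DG2}) are available, and it completely bypasses the compactness and limit-identification machinery you propose, the case split coming from the ``$\wedge$'' in (\ref{normcondition}), and any need for a uniform lower bound on $\|\chi_\omega\psi\|$. Your plan can be pushed through, but note that the citation of Lemma~\ref{lemma3.6}, Step~2 is slightly at odds with the obstacle you highlight: that argument passes to the limit in the Pontryagin maximum principle rather than directly in the nonlinear BVP, so the denominator issue is sidestepped there as well.
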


 \noindent{\it Proof.} $(i)$ Let $(\bar t_0, y_0)\in[0,T)\times
 L^2(\om)$ and let $\rho\in(0,1/2)$. We arbitrarily take two different points  $y_0^1$ and $y_0^2$ from $
B(\bar y_0,\rho)\subset B(\bar y_0,1)$. Denote by $\mathcal{L}$ the
straight line passing through $y_0^1$ and $y_0^2$, namely,
$\mathcal{L}\equiv \biggr\{y_0^\l\deq(1-\l) y_0^1+\l y_0^2 \bigm|\l\in(-\infty,+\infty)\biggl\}$.
Clearly, $\mathcal{L}$ intersects with $B(\bar y_0,1)$ at two
different points, denoted by $y_0^{\l_1}$ and $y_0^{\l_2}$, with
$\l_1 <\l_2$.
 Since the  segment
 $\{\,y_0^\l|\l\in[0,1]\,\}\subseteq B(\bar y_0,\rho)$ and $B(\bar y,\rho)\bigcap\partial B(\bar
 y,1)=\emptyset$, it holds that
 $\l_1<0<1<\l_2.$  Moreover, one can easily check that
 \bb\label{proportion}
 \ds\frac{\|y_0^1-y_0^{\l_1}\|}{0-\l_1}=\frac{\|y_0^2-y_0^1\|}{1-0}=\frac{\|y_0^{\l_2} -y_0^2\|}{\l_2-1}.
\ee

\noindent For each $t_0\in \Bigr[(\bar t_0-\bar\d)^+,\bar t_0+\bar
\d\Bigl]$, we define  a function $g_{t_0}(\cdot;y_0^1,y_0^2)$ by setting
$$g_{t_0}(\l; y_0^1,y_0^2)=N(t_0,(1-\l) y_0^1+\l y_0^2),\qq \lambda\in(-\infty,+\infty).$$
Obviously, the convexity of $N(t_0,\cdot)$ (see Lemma \ref{convex})
implies the convexity of $g_{t_0}(\cdot; y_0^1,y_0^2)$. By the property of
convex functions, one has that
$$\ds\frac{g_{t_0}(0 ;y_0^1,y_0^2)-g_{t_0}(\l_1;y_0^1,y_0^2)}{0-\l_1}\le\frac{g_{t_0}(1;y_0^1,y_0^2)-g_{t_0}(0;y_0^1,y_0^2)}
{1-0}\le\frac{g_{t_0}(\l_2;y_0^1,y_0^2)-g_{t_0}(1;y_0^1,y_0^2)}{\l_2-1}.$$
This, along with (\ref{proportion}) and the nonnegativity of
$g_{t_0}(1;y_0^1,y_0^2)$, indicates  that
\begin{eqnarray}\label{proposition-a}
\ba{l}
\ns\ds~~~\frac{N(t_0,y_0^2)-N(t_0,y_0^1)}{\|y_0^2-y_0^1\|}=\frac{g_{t_0}(1;y_0^1,y_0^2)-g_{t_0}(0;y_0^1,y_0^2)}
{\|y_0^2-y_0^1\|}\\

\nm\ds\le
\frac{g_{t_0}(\l_2;y_0^1,y_0^2)-g_{t_0}(1;y_0^1,y_0^2)}{(\l_2-1)\|y_0^2-y_0^1\|}\le\frac{g_{t_0}(\l_2;y_0^1,y_0^2)}
{(\l_2-1)\|y_0^2-y_0^1\|}\\

\ns\ds=\frac{g_{t_0}(\l_2;y_0^1,y_0^2)}{\|y_0^{\l_2}-y_0^2\|}. \ea
\end{eqnarray}
Two observations are as follows. The triangle inequality implies
that
$$\|y_0^{\l_2}-y_0^2\|\ge\|y_0^{\l_2}-\bar y_0\|-\|y_0^2-\bar y_0\|\ge 1-\rho;$$
The boundedness of $N(\cdot,\cdot)$ (\see Lemma \ref{M-bounded}) gives
that for each $t_0\in \Bigr[(\bar t_0-\bar\d)^+,\bar t_0+\bar \d\Bigl]$, $y_0^1, y_0^2\in B(\bar y_0,\rho)$,
$$g_{t_0}(\l_2;y_0^1,y_0^2)=N(t_0,y_0^{\l_2})\le\sup\biggr\{N(s_0,z_0)\Bigm|(s_0,z_0)\in
\Bigr[(\bar t_0-\bar\d)^+,\bar t_0+\bar\d\Bigl]\times B(\bar
y_0,1)\biggl\}\equiv C.$$

Along with these two observations, (\ref{proposition-a}) yields that
 $${N(t_0,y_0^1)-N(t_0,y_0^2)}\le \frac{C}{1-\rho}{\|y_0^2-y_0^1\|}\equiv C(\rho){\|y_0^2-y_0^1\|}.$$
 Similarly, we can obtain  ${N(t_0,y_0^2)-N(t_0,y_0^1)}\le
 C(\rho){\|y_0^2-y_0^1\|}.$ These lead to the desired Lipschitz continuity.

\medskip

\noindent $(ii)$ Let $\bar y_0\in L^2(\om)$. Arbitrarily take $\bar
t_0$ from $[0,T)$ and write $\bar\d=(T-\bar t_0)/2$. It suffices to
show that $N(\cdot, \bar y_0)$ is continuous over $\Bigr[(\bar
t_0-\bar\d)^+,\bar t_0+\bar\d\Bigl]$. For this purpose, we
arbitrarily take two
 different $t_0^1$ and $t_0^2$ from this interval. Without lose of generality, we can  assume that
$t_0^1<t_0^2$. Then by (\ref{DG2}) (see Lemma \ref{NDP}), the part
$(i)$ of the current lemma and Lemma \ref{M-bounded}, we can easily deduce that
$$\ba{l}
\ns~~~|N(t_0^1,\bar y_0)-N(t_0^2,\bar y_0)|\\
 \ns=\left|N\Bigr(t_0^2,\,\bar y^{N(t_0^1,\bar y_0)}_{t_0^1,\bar
 y_0}(t_0^2)\Bigr)
 -N(t_0^2,\bar y_0) \right|
 \le C\left\|\bar y^{N(t_0^1,\bar y_0)}_{t_0^1,\bar y_0}(t_0^2)-\bar y_0)\right\|\\
 \ns\ds=C\left\|\Bigr[e^{(t_0^2-t_0^1)\triangle}-I\Bigl]\bar y_0+\int^{t_0^2}_{t_0^1}e^{(t_0^2-s)\triangle}
 \bar u^{N(t_0^1,\bar y_0)}_{t_0^1,\bar y_0}(s)ds\right\|\\
\ns\ds\le C\left\|e^{(t_0^2-t_0^1)\triangle}-I\right\|\|\bar
 y_0\|+ C|t_0^2-t_0^1|
\ea$$
where $C$ stands for a positive constant independent of $t_0^1$ and
$t_0^2$. It varies in different contexts. Clearly, the continuity of
$N(\cdot, \bar y_0)$ over $\Bigr[(\bar t_0-\bar\d)^+,\bar
t_0+\bar\d\Bigl]$ follows from the above inequality at once.

In summary, we finish the proof.\endpf

\bigskip

Next, we study some  properties for the map $\cN: [0,T)\times
L^2(\Omega)\times[0,+\infty)\mapsto L^2(\Omega)$ defined by
\begin{eqnarray}\label{MAPCN}
\cN(t_0,y_0,M)= \bar
u^M_{t_0,y_0}(t_0)=M\frac{\chi_{\omega}\bar\psi^M_{t_0,y_0}(t_0)}{\|\chi_{\omega}\bar\psi^M_{t_0,y_0}(t_0)\|}.
\end{eqnarray}
\vskip 10pt

\begin{Lemma}\label{Lip2} $(i)$ For each  $ (\bar t_0,\bar y_0,\bar M )\in[0,T)\times
L^2(\Omega)\times[0,\infty)$, there is a $\bar\rho>0$ such that  $\cN(t_0,\cdot,\cdot)$ is Lipschitz continuous over $B(\bar
y_0, \bar \rho)\times [(\bar M-\bar \rho)^+, \bar M+\bar \rho]$ uniformly with respect to  $t_0\in B(\bar
t_0, \bar\rho)\bigcap[0,T)$. $(ii)$ $\cN(\cdot,\bar y_0,\cdot)$ is
continuous over $[0,T)\times [0,\infty)$. \end{Lemma}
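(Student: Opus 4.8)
The plan is to regard $\cN$ as the composition of the map $(t_0,y_0,M)\mapsto \chi_\omega\bar\psi^M_{t_0,y_0}(t_0)$ with the map $(M,v)\mapsto Mv/\|v\|$, the latter being $C^\infty$, hence locally Lipschitz, on $[0,\infty)\times(L^2(\Omega)\setminus\{0\})$. It therefore suffices to prove that $\chi_\omega\bar\psi^M_{t_0,y_0}(t_0)$ never vanishes and stays bounded away from $0$ on the neighbourhood under consideration, and that $(t_0,y_0,M)\mapsto\chi_\omega\bar\psi^M_{t_0,y_0}(t_0)$ has the claimed continuity and local Lipschitz properties. For the analysis of the latter I would use the representation $\bar\psi^M_{t_0,y_0}(t_0)=e^{(T-t_0)\triangle}\bigl(z_d-\bar y^M_{t_0,y_0}(T)\bigr)$ coming from the backward equation in (\ref{tpbv}), together with the fact --- which is just the statement that $(OP)^M_{t_0,y_0}$ minimises the distance to $z_d$ over the attainable set (cf.\ Lemma~\ref{lemma2.6}) --- that $\bar y^M_{t_0,y_0}(T)$ is the $L^2(\Omega)$-metric projection of $z_d$ onto the closed convex attainable set $\mathcal R^M_{t_0,y_0}=e^{(T-t_0)\triangle}y_0+M\,\mathcal K_{t_0}$, where $\mathcal K_{t_0}=\bigl\{\int_{t_0}^{T}e^{(T-s)\triangle}\chi_\omega u(s)\,ds:\ \|u(s)\|\le 1\ \text{a.e.}\bigr\}$ is convex, symmetric and contained in the ball of radius $T-t_0$.

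For the nonvanishing I would repeat the argument that yields (\ref{uniqueFHL}): $\bar y^M_{t_0,y_0}(T)$ belongs to the attainable set of the controlled system from $(t_0,y_0)$, which by null controllability (see, e.g., \cite{EZ}) coincides with the attainable set from $(t_0,0)$ and hence, by (\ref{1.2}), does not contain $z_d$; so $\bar\psi^M_{t_0,y_0}(T)=z_d-\bar y^M_{t_0,y_0}(T)\neq 0$ for every $(t_0,y_0,M)$, and the unique continuation property of \cite{FHL} gives $\chi_\omega\bar\psi^M_{t_0,y_0}(t_0)\neq 0$. Once joint continuity of $(t_0,y_0,M)\mapsto\chi_\omega\bar\psi^M_{t_0,y_0}(t_0)$ is known (see below), the uniform lower bound $\|\chi_\omega\bar\psi^M_{t_0,y_0}(t_0)\|\ge\tfrac12\|\chi_\omega\bar\psi^{\bar M}_{\bar t_0,\bar y_0}(\bar t_0)\|>0$ on a small enough neighbourhood follows simply by shrinking $\bar\rho$; here it is essential that in part $(i)$ the radius $\bar\rho$ may be chosen depending on $(\bar t_0,\bar y_0,\bar M)$.

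The dependence on $y_0$ is the easy part: the metric projection onto a fixed convex set and the semigroup $e^{(T-t_0)\triangle}$ are both nonexpansive, so $y_0\mapsto\bar y^M_{t_0,y_0}(T)$ is Lipschitz with constant $\le 2$, uniformly in $(t_0,M)$; composing with the bounded operator $\chi_\omega e^{(T-t_0)\triangle}$ and then with $(M,v)\mapsto Mv/\|v\|$ gives the Lipschitz dependence on $y_0$ asserted in $(i)$. For the continuity in $(t_0,M)$ required in $(ii)$ I would verify that $(t_0,M)\mapsto\mathcal R^M_{t_0,\bar y_0}$ is continuous for the Hausdorff distance $d_H$ --- indeed $t_0\mapsto e^{(T-t_0)\triangle}\bar y_0$ is strongly continuous, while extending controls by zero gives $d_H(\mathcal K_{t_0},\mathcal K_{t_0'})\le|t_0-t_0'|$ and $d_H(M_1\mathcal K_{t_0},M_2\mathcal K_{t_0})\le|M_1-M_2|(T-t_0)$ --- so that the metric projection, being continuous with respect to the set (for $d_H$, on uniformly bounded families), yields joint continuity of $(t_0,M)\mapsto\bar y^M_{t_0,\bar y_0}(T)$; combined with joint continuity of $(t_0,v)\mapsto\chi_\omega e^{(T-t_0)\triangle}v$ and the nonvanishing above, this gives continuity of $\cN(\cdot,\bar y_0,\cdot)$ on all of $[0,T)\times[0,\infty)$.

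The main obstacle is the Lipschitz dependence on $M$ needed in $(i)$. From the homogeneous reparametrisation $\bar y^M_{t_0,y_0}(T)=e^{(T-t_0)\triangle}y_0+M\,P_{\mathcal K_{t_0}}\!\bigl(M^{-1}(z_d-e^{(T-t_0)\triangle}y_0)\bigr)$, valid for $M>0$, the nonexpansiveness of $P_{\mathcal K_{t_0}}$ together with $\mathrm{rad}(\mathcal K_{t_0})\le T$ at once gives Lipschitz dependence on $M$ on any interval bounded away from $0$, so that the case $\bar M>0$ is settled by taking $\bar\rho<\bar M$. When $\bar M=0$ the interval $[0,\bar\rho]$ contains the origin, and there the soft parallelogram/convexity estimates for metric projections deliver only a $1/2$-H\"older bound in $M$; this is the delicate point. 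I would resolve it either by a genuine sensitivity estimate for the projection onto $\mathcal R^M_{t_0,y_0}$ that exploits its scaling structure and uniform boundedness (yielding a true Lipschitz bound up to $M=0$, with constant of the order of $\mathrm{rad}(\mathcal K_{t_0})\le T$), or by a Gronwall estimate for the difference of the two optimality systems, in the spirit of (\ref{a3.11})--(\ref{a3.12}), which closes for small $M$ once $\int_{t_0}^{T}ds/\|\chi_\omega\bar\psi^M_{t_0,y_0}(s)\|$ is under control --- the possible degeneracy of $\|\chi_\omega\bar\psi^M_{t_0,y_0}(s)\|$ as $s\uparrow T$ being the feature to be handled along that route.
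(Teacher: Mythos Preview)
Your overall architecture is sound and matches the paper's: factor $\cN$ as $(M,v)\mapsto Mv/\|v\|$ composed with $(t_0,y_0,M)\mapsto v=\chi_\omega\bar\psi^M_{t_0,y_0}(t_0)$, establish a uniform lower bound $\|v\|\ge c>0$ via unique continuation plus continuity, and use the projection/optimality structure for the regularity of the inner map. Your treatment of nonvanishing, of the $y_0$-Lipschitz estimate (nonexpansiveness of the metric projection), of the continuity in $(t_0,M)$, and of the case $\bar M>0$ is correct and close to the paper's.

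The only substantive issue is the case $\bar M=0$, and here there is not really a gap in your method but in your bookkeeping. You write that ``soft convexity estimates deliver only a $1/2$-H\"older bound in $M$'' and then propose either an unspecified sharpened projection estimate or a Gronwall argument on the optimality system. Neither is needed. Your own homogeneous reparametrisation already gives, for $0<M_1\le M_2$ and $w=z_d-e^{(T-t_0)\triangle}y_0$,
\[
\bigl\|\bar\psi^{M_1}_{t_0,y_0}(t_0)-\bar\psi^{M_2}_{t_0,y_0}(t_0)\bigr\|
\;\le\;\bigl\|M_1P_{\mathcal K_{t_0}}(w/M_1)-M_2P_{\mathcal K_{t_0}}(w/M_2)\bigr\|
\;\le\;\Bigl(\tfrac{\|w\|}{M_2}+T\Bigr)\,|M_1-M_2|,
\]
by one application of the triangle inequality and nonexpansiveness of $P_{\mathcal K_{t_0}}$. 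This is not Lipschitz in $M$ for $\psi$ itself, but that is not what is required: the outer map $(M,v)\mapsto Mv/\|v\|$ has $v$-Lipschitz constant $2M_1/\|v_1\|\le 2M_1/c$, so the composite obeys
\[
\|\cN(t_0,y_0,M_1)-\cN(t_0,y_0,M_2)\|
\;\le\;|M_1-M_2|+\frac{2M_1}{c}\Bigl(\tfrac{\|w\|}{M_2}+T\Bigr)|M_1-M_2|
\;\le\;\Bigl(1+\tfrac{2(\|w\|+T\bar\rho)}{c}\Bigr)|M_1-M_2|,
\]
using $M_1/M_2\le 1$ and $M_1\le\bar\rho$. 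The singular factor $1/M_2$ is absorbed by the prefactor $M_1$; this cancellation is the whole point, and it is exactly what the paper does, only phrased as an estimate on $\bigl\|\bar\psi^1/M_1-\bar\psi^2/M_2\bigr\|$ (their (\ref{estimate3}), obtained from the variational inequality (\ref{estimate4}), which is your nonexpansiveness of $P_{\mathcal K_{t_0}}$ in disguise) followed by the algebra leading to (\ref{estimate2}). So your first proposed fix --- a ``sensitivity estimate exploiting the scaling structure'' --- is precisely this, and it is already contained in your reparametrisation; you simply need to carry the factor of $M_1$ through the composition rather than insist on Lipschitz continuity of $\bar\psi^M$ alone. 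Your Gronwall alternative, by contrast, runs into the degeneracy of $\|\chi_\omega\bar\psi(s)\|$ at $s=T$ that you yourself flag, and should be discarded.
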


\noindent{\it Proof.} $(i)$ Let $ (\bar t_0,\bar y_0,\bar M
)\in[0,T)\times L^2(\Omega)\times[0,\infty)$. The proof of the first
continuity will be carried by several steps as follows:\\

\noindent{\it Step 1.   For all $t_0\in [0,T)$,  $0\le M_1\le M_2$ and $y_0^1,~y_0^2\in
L^2(\om)$, it holds that
\bb\label{estimate2}\ba{rl}
 \ns&\ds\|\cN(t_0,y_0^1, M_1)-\cN(t_0,y_0^2,M_2)\|\leq|M_1-M_2|+\\
\nm &\ds\frac{4M_1}{\| \chi_{\omega}\bar \psi^{M_1}_{t_0,y^1_0} \|}
\left[M_1\|y_0^1-y_0^2\|+(\|y_0^2\|+\|z_d\|){|M_1-M_2|}\right]\;\;\mbox{when}\;M_1>0;
\ea \ee
\bb\label{estimate2-1}
 \|\cN(t_0,y_0^1, M_1)-\cN(t_0,y_0^2,M_2)\|=|M_1-M_2|\;\;\mbox{when}\;  M_1=0,\ee
}

The equality (\ref{estimate2-1}) follows directly  from the
definition of $\cN$. Now we prove (\ref{estimate2}). For
simplification of notation, we write
$$\bar y^i\deq\bar y^{M_i}_{t_0,y^i_0},\q\bar\psi^i\deq\bar
\psi^{M_i}_{t_0,y^i_0},\q \bar u^i\deq\bar u^{M_i}_{t_0,y^i_0},\qq
i=1,2.
$$
It is clear that  that
$(1-\e)\bar u^1+\e \frac{M_1}{M_2}\bar u^2\in L^\infty(t_0,T;B(0,M_1))$
for any $\e\in[0,1],$ which, together with the optimality of $\bar
u^1$ to $(OP)^{M_1}_{t_0,y_0^1}$, shows  that
$$\ba{ll}
\ns0&\ds\le \liminf\limits_{\e\rightarrow0+}\frac{1}{2\e}\left\{
 \biggr\|e^{(T-t_0)\triangle}y_0^1+\int^T_{t_0}e^{(T-s)\triangle}[(1-\e)\bar u^1+\e\frac{M_1}{M_2}\bar
 u^2]ds-z_d\biggl\|^2\right.\\
 \ns&~~~~~~~~~~~~\left.\ds-\biggr\|e^{(T-t_0)\triangle}y_0^1+\int^T_{t_0}e^{(T-s)\triangle}\bar
 u^1ds-z_d\biggl\|^2\right\}\\
\nm&\ds=\left\langle
e^{(T-t_0)\triangle}y_0^1+\int^T_{t_0}e^{(T-s)\triangle}\bar
 u^1ds-z_d\,,~\int^T_{t_0}e^{(T-s)\triangle}\left[\frac{M_1}{M_2}\bar
 u^2-\bar u^1\right]ds\right\rangle.
 \ea$$
Similarly, we can prove that
 $$\ds 0\le \left\langle e^{(T-t_0)\triangle}y_0^2+\int^T_{t_0}e^{(T-s)\triangle}\bar
 u^2ds-z_d\,,~\int^T_{t_0}e^{(T-s)\triangle}\left[\frac{M_2}{M_1}\bar
 u^1-\bar u^2\right]ds\right\rangle.$$
Dividing the  first inequality above by $M_1^2$  and  the second one
by $M_2^2$, then adding them together, we obtain that
 $$\ba{l}
\ds~~~\left\langle
\frac{e^{(T-t_0)\triangle}y_0^1-z_d}{M_1}
-\frac{e^{(T-t_0)\triangle}y_0^2-z_d}{M_2}\,,~
 \int^T_{t_0}e^{(T-s)\triangle}\left[ \frac{\bar u^2}{M_2}
 -\frac{\bar  u^1}{M_1}\right]ds\right\rangle\\
 \ge\ds \biggr\|\int^T_{t_0}e^{(T-s)\triangle}\left[ \frac{\bar u^2}{M_2}
 -\frac{\bar  u^1}{M_1}\right]ds\biggl\|^2,
 \ea
$$
 which implies that
\begin{equation}\label{estimate4}\biggr\|\frac{e^{(T-t_0)\triangle}y_0^1-z_d}{M_1}
-\frac{e^{(T-t_0)\triangle}y_0^2-z_d}{M_2}\biggl\|
 \ge\biggr\|\int^T_{t_0}e^{(T-s)\triangle}\left[ \frac{\bar u^2}{M_2}
 -\frac{\bar  u^1}{M_1}\right]ds\biggl\|.\end{equation}
Since $(\bar y^i, \bar \psi^i)$, $i=1,2$,  solve (\ref{tpbv}) and the semigroup  $\{
e^{t\triangle}: t\geq 0\}$ is contractive, we can use (\ref{estimate4}) to derive that\\
 \bb\label{estimate3}\ba{rl}
\ns&\ds\biggr\|\frac{\bar\psi^1(t_0)}{M_1}
-\frac{\bar\psi^2(t_0)}{M_2}\biggl\|
=\biggr\|e^{(T-t_0)\triangle}\left(\frac{\bar\psi^1(T)}{M_1}
-\frac{\bar\psi^2(T)}{M_2}\right)\biggl\|\\

\ns\le&\ds\biggr\|\frac{\bar\psi^1(T)}{M_1}
-\frac{\bar\psi^2(T)}{M_2}\biggl\|
=\biggr\|\frac{\bar y^1(T)-z_d}{M_1}
-\frac{\bar y^2(T)-z_d}{M_2}\biggl\|\\

\ns\le&\ds\left\| \frac{e^{(T-t_0)\triangle}y^1_0-z_d}{M_1}
-\frac{e^{(T-t_0)\triangle}y^2_0-z_d}{M_2}\right\|
 +\left\| \int^T_{t_0}e^{(T-s)\triangle}\left[\frac{\bar u^1}{M_1}
 -\frac{\bar u^2}{M_2}\right]ds\right\|\\

\ns\le&\ds \frac{2}{M_1} \|y_0^1-y_0^2\|
+2(\|y_0^2\|+\|z_d\|)\frac{|M_1-M_2|}{M_1M_2}.
\ea\ee

\noindent By direct computation, we obtain that
$$\ba{ll}
 \ns&\ds\|\cN(t_0,y_0^1, M_1)-\cN(t_0,y_0^2,M_2)\|
 =\biggr\|M_1\frac{\chi_{\omega}\bar\psi^1(t_0)}{\|\chi_{\omega}\bar\psi^1(t_0)\|}
-M_2\frac{\chi_{\omega}\bar\psi^2(t_0)}{\|\chi_{\omega}\bar\psi^2(t_0)\|}\biggl\|\\
\ns\le&\ds~M_1\biggr\|\frac{\chi_{\omega}\bar\psi^1(t_0)}{\|\chi_{\omega}\bar\psi^1(t_0)\|}
-\frac{\chi_{\omega}\bar\psi^2(t_0)}{\|\chi_{\omega}\bar\psi^2(t_0)\|}\biggl\|
+|M_1-M_2|\biggr\|\frac{\chi_{\omega}\bar\psi^2(t_0)}{\|\chi_{\omega}\bar\psi^2(t_0)\|}\biggl\|\\
\ns=&\ds\frac{M_1}{\|{\frac{\chi_{\omega}\bar\psi^1(t_0)}{M_1}}\|\|{\frac{\chi_{\omega}\bar\psi^2(t_0)}{M_2}}\|}
{\biggr\|}\left[\|\frac{\chi_{\omega}\bar\psi^2(t_0)}{M_2}\|\frac{\chi_{\omega}\bar\psi^1(t_0)}{M_1}
-\|\frac{\chi_{\omega}\bar\psi^1(t_0)}{M_1}\|\frac{\chi_{\omega}\bar\psi^2(t_0)}{M_2}\right]\biggl\|+|M_1-M_2|\\
\ns\le&\ds\frac{2M_1}{\|{\frac{\chi_{\omega}\bar\psi^1(t_0)}{M_1}}\|\|{\frac{\chi_{\omega}\bar\psi^2(t_0)}{M_2}}\|}
\left\|\frac{\chi_{\omega}\bar\psi^2(t_0)}{M_2}\right\|\left\|\frac{\chi_{\omega}\bar\psi^1(t_0)}{M_1}-
\frac{\chi_{\omega}\bar\psi^2(t_0)}{M_2}\right\|+|M_1-M_2|\\
\ns\le&\ds\frac{2M_1^2}{\|\chi_{\omega}\bar\psi^1(t_0)\|}
\left\|\frac{ \bar\psi^1(t_0)}{M_1}-\frac{
\bar\psi^2(t_0)}{M_2}\right\|+|M_1-M_2|.
 \ea$$

\noindent This, together with (\ref{estimate3}), shows
(\ref{estimate2}).

\bigskip
\noindent{\it Step 2.    When $\bar M>0$, there is $\bar\rho>0$ such that for
each  $( t_0,y_0,M)\in [(\bar t_0-\bar \rho)^+,\bar t_0+\bar
\rho]\times B(\bar y_0,\bar \rho )\times [\bar M-\bar \rho, \bar M,\bar \rho]$,
\begin{eqnarray}\label{LOWERBOUNDED1}
\left\|\chi_{\omega}\bar\psi^{M}_{t_0,y_0}(t_0)\right\|\geq\frac{1}{2}
\left\|\chi_{\omega}\bar\psi^{\bar M}_{\bar t_0,\bar y_0}(\bar
t_0)\right\|>0.
\end{eqnarray}  }

The
second inequality in (\ref{LOWERBOUNDED1}) follows from (\ref{uniqueFHL}). The
first one will be proved by the following two cases:

\noindent{\it Case 1:  $t_0\leq \bar t_0$.} In this case, the following three estimates hold for all
$y_0\in L^2(\Omega)$ and $M\in [\bar M/2, (3\bar M)/2]$:
$$\ba{ll}
 \ns&\ds\left \|\bar\psi^M_{t_0,   y_0}(t_0)-\bar\psi^{\bar M}_{\bar   t_0,\bar y_0}
 (\bar  t_0)\right\|=\ds\left \|e^{(\bar t_0-t_0)\triangle}\bar\psi^M_{t_0,  y_0}(\bar t_0)-\bar\psi^{ \bar M}_{\bar t_0, \bar y_0}(\bar  t_0)
\right\|\\
  \nm\le&\left \|e^{(\bar t_0-t_0)\triangle}\right\|\cdot\left\|\bar\psi^M_{t_0,  y_0}(\bar t_0)-
\bar\psi^{\bar M}_{\bar t_0, \bar y_0}(\bar t_0)\right\|
+ \left \| e^{(\bar t_0-t_0)\triangle}-I \right\|\cdot
  \|\bar\psi^{\bar M}_{\bar t_0, \bar y_0}(\bar t_0)\|\\
  \ns\le& \left\|\bar\psi^M_{t_0,  y_0}(\bar t_0)-
\bar\psi^{\bar M}_{\bar t_0, \bar y_0}(\bar t_0)\right\|
+ \left \| e^{(\bar t_0-t_0)\triangle}-I \right\|\cdot
  \|\bar\psi^{\bar M}_{\bar t_0, \bar y_0}(\bar t_0)\|;
\ea
$$
(Here $I$ denotes the identity operator on $L^2(\om)$.)
$$\ba{ll}
\ns&\left\|\bar\psi^M_{t_0,  y_0}(\bar t_0)
- \bar\psi^{\bar M}_{\bar t_0, \bar y_0}(\bar t_0)\right\|
=\left\|\bar\psi^M_{\bar t_0,  \bar y^M_{t_0,y_0}(\bar t_0)}(\bar
t_0)
- \bar\psi^{\bar M}_{\bar t_0, \bar y_0}(\bar t_0)\right\|\\
 \ns\le&\ds M\left\|\frac{\bar\psi^M_{\bar t_0,  \bar y^M_{t_0,y_0}(\bar t_0)}(\bar
 t_0)}{M}
- \frac{\bar\psi^{\bar M}_{\bar t_0, \bar y_0}(\bar t_0)}{\bar
M}\right\|
+\frac{|M-\bar M|}{\bar M}\left\|\bar\psi^{\bar M}_{\bar t_0, \bar
y_0}(\bar t_0)\right\|\\
 \ns\le&\ds 2\left\|\bar y^M_{t_0,y_0}(\bar t_0)-\bar y_0\right\|
 + \left( 2\|\bar y_0\|+ 2\|z_d\|+ \left\|\bar\psi^{\bar M}_{\bar t_0, \bar
y_0}(\bar t_0)\right\|   \right)\frac{|M-\bar M|}{\bar M};
\ea$$
(Here, (\ref{DG1}) and (\ref{estimate3}) have been used.) and
$$\ba{ll}
~~~~&\ds\left\|\bar y^M_{t_0,y_0}(\bar t_0)-\bar y_0
\right\|=\left\|e^{(\bar t_0-t_0)\triangle} y _0-\bar y_0
+ \int^{\bar t_0}_{t_0} e^{(\bar t_0-s)\triangle} \bar u^M_{t_0,y_0}(s)ds\right\|\\
 \ns\le&\ds  \left\|e^{(\bar t_0-t_0)\triangle}\right\|\cdot\| y _0-\bar y_0\|
 +\left\|e^{(\bar t_0-t_0)\triangle}-I\right\|\cdot\|\bar y_0\|
+ \int^{\bar t_0}_{t_0}\left\| e^{(\bar t_0-s)\triangle}\right\|\cdot \left\|\bar u^M_{t_0,y_0}(s)\right\| ds\\
 \ns\le&\ds\| y _0-\bar y_0\|
 +\left\|e^{(\bar t_0-t_0)\triangle}-I\right\|\cdot\|\bar y_0\|
+M (\bar t_0-{t_0}).
\ea$$

\noindent Combining  the above-mentioned three inequalities together
leads to
\bb\label{psiC1}\ba{l}
 ~~\ds\left \|\bar\psi^M_{t_0,   y_0}(t_0)-\bar\psi^{\bar M}_{\bar
t_0,\bar y_0}(\bar t_0)\right\|
\le 2\| y _0-\bar y_0\|+2M (\bar t_0-{t_0})\\
 \ns \ds

 + \left\|e^{(\bar t_0-t_0)\triangle}-I\right\|\cdot\left(2\|\bar
 y_0\|+\|\bar\psi^{\bar M}_{\bar t_0, \bar y_0}(\bar t_0)\|\right)\\

\ns\ds
 + \left( 2\|\bar y_0\|+ 2\|z_d\|+ \left\|\bar\psi^{\bar M}_{\bar t_0, \bar
y_0}(\bar t_0)\right\|   \right)\frac{|M-\bar M|}{ \bar{ M}}.
  \ea\ee%
Clearly,  the right hand side of (\ref{psiC1}) is continuous with respect to
$(t_0,y_0,M)$. This, along with  the second inequality in
(\ref{LOWERBOUNDED1}), indicates that there exists a $\rho_1$ with
$$\ds 0<\rho_1<\frac{T-\bar t_0}{2}\wedge\frac{\bar M}{2},$$
such that for each $(t_0,y_0,M)\in [(\bar t_0-\rho_1)^+,\bar
t_0]\times B(\bar y_0,\rho_1)\times [\bar M- \rho_1, \bar M+\rho_1]$,
\bb\label{psibounded1}
 \left \|\bar\psi^M_{t_0,   y_0}(t_0)-\bar\psi^{\bar M}_{\bar t_0,\bar y_0}(\bar t_0)\right\|
\le\ds\frac{1}{2} \left\|\bar\psi^{\bar M}_{\bar t_0,\bar y_0}(\bar
t_0)\right\|,  \ee

\bigskip

\noindent {\it Case 2:  $t_0\geq\bar t_0$.}  In this case, the following two estimates hold for all $y_0\in L^2(\om)$ and $M\in [\bar M/2, (3\bar M)/2]$:
$$\ba{ll}
\ns&\ds\left\|\bar\psi^M_{t_0,  y_0}(  t_0)
- \bar\psi^{\bar M}_{\bar t_0, \bar y_0}(  t_0)\right\|\\
 \ns\le&\ds M\left\|\frac{\bar\psi^M_{t_0,  y_0}(   t_0) }{M}
- \frac{\bar\psi^{\bar M}_{\bar t_0, \bar y_0}(  t_0)}{\bar
M}\right\|
+\frac{|M-\bar M|}{\bar M}\left\| \bar\psi^{\bar M}_{\bar t_0, \bar
y_0}(  t_0) \right\|\\
\ns=&\ds M\left\|\frac{\bar\psi^M_{t_0,  y_0}  (  t_0)}{M}
- \frac{\bar\psi^{\bar M}_{ t_0,  \bar y^{\bar M}_{\bar t_0,\bar
y_0}( t_0)}( t_0)}{\bar M}\right\|
+\frac{|M-\bar M|}{\bar M}\left\| \bar\psi^{\bar M}_{\bar t_0, \bar
y_0}(  t_0) \right\|\\
 \ns\le&\ds 2\left\| y_0 -\bar y^{\bar M}_{\bar t_0,\bar
y_0}( t_0) \right\|
 + \left(  2\left\|\bar y^{\bar M}_{\bar t_0,\bar
y_0}( t_0)\right\|+ 2\|z_d\|+  \left\|\bar\psi^{\bar M}_{\bar t_0,
\bar y_0}( t_0)\right\| \right)\frac{|M-\bar M|}{\bar M}
\ea$$
(Here,  (\ref{DG1}) and (\ref{estimate3}) have been used.) and
$$\ba{ll}
~~~~&\ds\left\|\bar y^{\bar M}_{\bar t_0,\bar y_0}(  t_0)-  y_0
\right\|=\left\|e^{( t_0-\bar t_0)\triangle}\bar y _0-  y_0
+ \int^{t_0}_{\bar  t_0} e^{(t_0-s)\triangle} \bar u^{\bar M}_{\bar t_0,\bar y_0}(s)ds\right\|\\
\ns=&\ds  \left\|e^{(t_0-\bar t_0)\triangle}\bar y _0- y_0
+ \int^{ t_0}_{\bar t_0} e^{(  t_0-s)\triangle} \bar u^{\bar M}_{\bar t_0,\bar y_0}(s)ds\right\|\\
 \ns\le&\ds  \left\|e^{(\bar t_0-t_0)\triangle}-I\right\|\cdot\|\bar y_0 \|
 +\| y _0-\bar y_0\|
+ \int^{ t_0}_{\bar t_0}\left\| e^{( t_0-s)\triangle}\right\|\cdot \left\|\bar u^{\bar M}_{\bar t_0,\bar y_0}(s)\right\| ds\\
 \ns\le&\ds\| y _0-\bar y_0\|
 +\left\|e^{(\bar t_0-t_0)\triangle}-I\right\|\cdot\|\bar y_0\|
+\bar M ( t_0-\bar{t_0}).
\ea$$

\noindent From the above-mentioned two estimates, we derive that
\bb\label{psiC2}\ba{c}
 \ns~~~~\left \|\bar\psi^M_{t_0,   y_0}(t_0)-\bar\psi^{\bar M}_{\bar
t_0,\bar y_0}(\bar t_0)\right\|
\le 2\| y _0-\bar y_0\|+2\bar M (\bar t_0-{t_0})+ 2\left\|e^{(
t_0-\bar t_0)\triangle}-I\right\|\cdot \|\bar y_0\|\\
\nm\ds

 + \left( 2\left\|\bar y^{\bar M}_{\bar t_0,\bar
y_0}( t_0)\right\|+\left\|\bar\psi^{\bar M}_{\bar t_0, \bar
y_0}(\bar t_0)\right\|+2\|z_d\|  \right)\frac{|M-\bar M|}{
\overline{ M}}
+\left \|\bar\psi^{ \bar M}_{\bar
  t_0,\bar   y_0}(t_0)-\bar\psi^{ \bar M}_{\bar
  t_0,\bar   y_0}( \bar t_0)\right\|
  \ea\ee%
By the same argument used to get (\ref{psibounded1}) (notice  the
continuity of $\bar\psi^{ \bar M}_{\bar
  t_0,\bar   y_0}( \cdot)$),  we can find a $\rho_2$ with
$\ds 0<\rho_2<\frac{T-\bar t_0}{2}\wedge\frac{\bar M}{2}$,
such that  for each triplet  $( t_0,y_0,M)\in [ \bar t_0 ,\bar
t_0+\rho_2]\times B(\bar y_0),\rho_2)\times [\bar M-\rho_2, \bar M+\rho_2]$,
\bb\label{psibounded1-A}
 \left \|\bar\psi^M_{t_0,   y_0}(t_0)-\bar\psi^{\bar M}_{\bar t_0,\bar y_0}(\bar t_0)\right\|
\le\ds\frac{1}{2} \left\|\bar\psi^{\bar M}_{\bar t_0,\bar y_0}(\bar
t_0)\right\|.  \ee
Now we set $ \bar\rho=\rho_1\wedge\rho_2$. Then the first inequality
of  (\ref{LOWERBOUNDED1}) follows from (\ref{psibounded1}) and
(\ref{psibounded1-A}).

\bigskip

\noindent {\it Step 3. When $\bar M=0$, there is $\bar\rho>0$ such that
for each $t_0\in [(\bar t_0-\bar \rho)^+, \bar t_0+\bar \rho]$, each
$y_0\in B(\bar y_0,\bar \rho)$ and each $M\in [0,\bar \rho]$,
\begin{eqnarray}\label{LOWERBOUNDED1-B}
\left\|\chi_{\omega}\bar\psi^{M}_{t_0,y_0}(t_0)\right\|\geq\frac{1}{2}
\left\|\chi_{\omega}\bar\psi^{0}_{\bar t_0,\bar y_0}(\bar
t_0)\right\|>0.
\end{eqnarray}  }
The second inequality in (\ref{LOWERBOUNDED1-B}) follows from (\ref{uniqueFHL}). The remainder is to show the first one.  The following two
inequalities can be checked by direct computation:
$$\ba{ll}
 \ns&\ds\left\|\bar\psi^M_{t_0, y_0}(t_0)- \bar\psi^0_{ t_0, \bar y_0}(t_0)  \right\|
 =\left\|e^{( T-\bar t_0)\triangle}\left[\bar\psi^M_{t_0,y_0}(T)- \bar\psi^0_{ t_0, \bar y_0}(T)\right]\right\|\\
 \ns=&\ds
 \left\|e^{( T-\bar t_0)\triangle}\left[\bar y^M_{t_0,y_0}(T)- \bar y^0_{ t_0, \bar y_0}(T)\right]\right\|
 \le\left\|\bar y^M_{t_0,y_0}(T)- \bar y^0_{ t_0, \bar y_0}(T)\right\|\\
 \ns=&\ds\left\|e^{( T-  t_0)\triangle}(y_0-\bar y_0)
 + \int^{T}_{t_0} e^{(T-s)\triangle} \bar u^{  M}_{ t_0,  y_0}(s)ds\right\|\\
 \ns\leq&\left\|y_0-\bar y_0\right\|
 +M(T- t_0)
 \ea$$
\noindent and
$$\ba{ll}
 \ns&\left\|\bar\psi^0_{ t_0, \bar y_0}(t_0) -\bar\psi^0_{\bar t_0, \bar y_0}(\bar
 t_0)\right\|\\
 \ns=&
 \left\|\left[e^{2( T-  t_0)\triangle}-e^{2( T-\bar
 t_0)\triangle}\right]\bar y_0
 +
 \left[e^{( T-  t_0)\triangle}-e^{ ( T-\bar
 t_0)\triangle}\right]z_d\right\|\\
 \ns\le&
 \left\| e^{2( | t_0-\bar t_0|)\triangle}-I \right\|\|\bar y_0\|
 +\left[e^{( |t_0-\bar t_0|)\triangle}-I\right]\|z_d\|.
\ea $$
From these,  we deduce that
\bb\label{psiC3} \ba{ll}
  \ns&\left\|\bar\psi^M_{t_0, y_0}(t_0)- \bar\psi^0_{ \bar t_0, \bar y_0}(\bar t_0)\right\|\\
  \ns\le&\ds\left\| \bar\psi^M_{t_0, y_0}(t_0)- \bar\psi^0_{ t_0, \bar y_0}(t_0)\right\|
  +\left\|\bar\psi^0_{ t_0, \bar y_0}(t_0)
  -\bar\psi^0_{\bar t_0, \bar y_0}(\bar  t_0)\right\|\\
  \ns\le&\ds\|y_0-\bar y_0\| +M(T- t_0)
   +
 \left\|e^{2( | t_0-\bar t_0|)\triangle}-I \right\|\|\bar y_0\|
 +\left[e^{( |t_0-\bar t_0|)\triangle}-I\right]\|z_d\|.
\ea \ee
Clearly,  the right hand side of (\ref{psiC3}) is continuous with respect to
$t_0,y_0$ and $M$. This, together with the second inequality of
(\ref{LOWERBOUNDED1-B}), yields that  there exists $ \bar \rho\in
(0, \ds\frac{T-\bar t_0}{2})$ such that for each $( t_0,y_0,M)\in
[(\bar t_0-\bar \rho )^+,\bar t_0+\bar \rho]\times
 B(\bar y_0,\bar\rho)\times [0,\bar \rho]$,
\begin{eqnarray*}
\left\|\bar\psi^M_{t_0, y_0}(t_0)- \bar\psi^0_{ \bar t_0, \bar
y_0}(\bar t_0)\right\|\leq \ds\frac{1}{2} \left\|\bar\psi^{0}_{\bar
t_0,\bar y_0}(\bar t_0)\right\|,
\end{eqnarray*}
from which, the first inequality in (\ref{LOWERBOUNDED1-B}) follows
at once.\\

\noindent{\it Step 4. The required Lipschitz continuity of the map
$\cN$}

Clearly, we can take the same constant $\bar \rho$ in Step 2 and Step 3 such that (\ref{LOWERBOUNDED1}) and (\ref{LOWERBOUNDED1-B}) stand. When
$\bar M=0$, it follows from (\ref{estimate2}), (\ref{estimate2-1})
and (\ref{LOWERBOUNDED1-B}) that the map $\cN(t_0,\cdot,\cdot)$ is
Lipschitz continuous over $B(y_0, \bar \rho)\times [(\bar M-\bar \rho)^+, \bar M+\bar \rho]$
uniformly with respect to $t_0\in [(\bar t_0-\bar \rho)^+, \bar t_0+\bar \rho]$; When $\bar M>0$, the same conclusion follows
from (\ref{estimate2}), (\ref{estimate2-1}) and
(\ref{LOWERBOUNDED1}).

\bigskip

\noindent $(ii)$ Fix a $\bar y_0\in L^2(\om)$. Let $(\bar M, \bar t_0)\in [0,\infty)\times [0,T)$.
Since $\|\chi_\omega\bar\psi^M_{\bar t_0,
\bar y_0}(\bar t_0)\|\neq 0$ (see (\ref{uniqueFHL})), the continuity of the map $(t_0,M)\rightarrow
\cN(t_0,\bar y_0, M)$ at $(\bar t_0,\bar M)$ follows from the continuity of the map
$(t_0, M)\rightarrow
\bar\psi^M_{t_0,
\bar y_0}(t_0)$ at $(\bar t_0,\bar M)$. When  $\bar
M>0$, the continuity of the map
$(t_0, M)\rightarrow
\bar\psi^M_{t_0,
\bar y_0}(t_0)$ at $(\bar t_0,\bar M)$ follows from (\ref{psiC1}) and  (\ref{psiC2}). When  $\bar M=0$, the continuity of this map at $(\bar t_0,\bar M)$ follows from
(\ref{psiC3}). Thus, $\cN(\cdot,\bar y_0, \cdot)$ is continuous over $[0,T)\times [0,\infty)$.

In summary, we complete the proof.
\endpf

\bigskip

\noindent{\bf Proof of Proposition~\ref{LIPOFF}.} $(i)$ Let $(\bar t_0,\bar y_0)\in [0,T)\times L^2(\Omega)$. By the
definition of maps $\cN$ and $F$ (see (\ref{feedback}) and
(\ref{MAPCN})), we see that
\begin{eqnarray}\label{HK10}F(t_0,y_0)=\cN(t_0,y_0,N(t_0,y_0))\;\; {\rm for ~all}~(t_0,y_0)\in[0,T)\times L^2(\om).
\end{eqnarray}
According to  Lemma  \ref{Lip2}, there are $\bar\rho_1>0$ and
$C_1>0$ such that when  $(t_0,y_0^1,M_1)$ and  $(t_0,y_0^2,M_2)$
belong to $[(\bar t_0-\bar\rho_1)^+,\bar t_0+\bar\rho_1]\times
B(\bar y_0,\bar\rho_1)\times [(\bar M-\bar\rho_1)^+,\bar
M+\bar\rho_1]$,
\bb\label{LP}\|\cN(t_0,y_0^1,M_1)-\cN(t_0,y_0^2,M_2)\|\le
C_1\Bigr(\|y_0^1-y_0^2\|+|M_1-M_2|\Big).\ee
According to Lemma \ref{Lip1}, there are $\bar\rho_2>0$ and $C_2>0$
such that
$$|N(t_0,y_0^1)-N(t_0,y_0^2)|\le C_2 \|y_0^1-y_0^2\|,$$
for all  $(t_0,y_0^1)$ and $(t_0,y_0^2)$ belong to $[(\bar
t_0-\bar\delta)^+,\bar t_0+\bar\delta]\times B(\bar
y_0,\bar\rho_2)$, where $\bar \delta=(T-\bar t_0)/2$.

Let $\bar\rho=
\min\{\bar\rho_1,\bar\rho_2,\ds\frac{\bar\rho_1}{2C_2},\bar\delta\}$.
Then it follows from the above inequality that
$$|N(t_0,y_0^1 )-N(t_0,\bar y_0^2)|\le 2 C_2  \bar\rho\le\bar\rho_1\;\;\mbox{for all}\; y_0^1,y_0^2 \in B(\bar y_0,\bar\rho)\; \mbox{and}\;  t_0\in[(\bar t_0-\bar\rho )^+,\bar t_0+\bar\rho ].$$
This, along with (\ref{LP}), indicates that
$$\ba{l}
~~~\|F(t_0,y_0^1)-F(t_0,y_0^2)\|\\
\ns=\|\cN(t_0,y_0^1,N(t_0,y_0^1))-\cN(t_0,y_0^2,N(t_0,y_0^2))\|\\
\ns\le
C_1\Bigr(\|y_0^1-y_0^2\| +|N(t_0,y_0^1)-N(t_0,y_0^2)|\Bigr)\\
\ns\le
C_1\Bigr(\|y_0^1-y_0^2\|+C_2\|y_0^1-y_0^2\|\Bigr)=C_1(1+C_2)\|y_0^1-y_0^2\|
\ea.$$
for all $y_0^1,y_0^2 \in B(\bar y_0,\bar\rho)$ and $ t_0\in[(\bar
t_0-\bar\rho )^+,\bar t_0+\bar\rho ]$. The desired Lipschitz
continuity follows from the above inequality at once.

\noindent$(ii)$  Let $\bar y_0\in L^2(\om)$. Since $F(t_0,\bar y_0)=\cN(t_0,\bar y_0,N(t_0,\bar y_0))$ for all $t_0\in [0,T)$ (see (\ref{HK10})),
the desired continuity of $F(\cdot, \bar y_0)$  follows directly from the continuity of
$N(\cdot,\bar y_0)$  and $\cN(\cdot,\bar y_0,\cdot)$.

In summary, we complete the proof.\endpf

\subsection{Proof of Theorem~\ref{feedbacktheorem}}

\noindent{\bf Proof of Theorem~\ref{feedbacktheorem}.}  Let
$(t_0, y_0)\in[0,T)\times L^2({\Omega})$. By
theorem~\ref{theorem3.9},  $\bar y^{N(t_0,y_0)}_{t_0,y_0}(\cdot)$ is
the unique solution to Equation  (\ref{feedbacksystem}), i.e.,
$y_F(\cdot;t_0,y_0)=\bar y^{N(t_0,y_0)}_{t_0,y_0}(\cdot)$ over
$[t_0,T)$.
 Then, by (\ref{feedback}), (\ref{DG2}), (\ref{DG1})  and (\ref{op}), we see that
$$
F\Bigr(t,y_F(t;t_0,y_0)\Bigl)=F\left(t, \bar
y^{N(t_0,y_0)}_{t_0,y_0}(t)\right) =N(t_0,y_0)\frac
{{\chi}_{\omega}\bar\psi^{N(t_0,y_0)}_{t_0,y_0}(t)}{\|{\chi}_{\omega}\bar\psi^{N(t_0,y_0)}_{t_0,y_0}(t)\|}
=\bar u^{N(t_0,y_0)}_{t_0,y_0}(t),\q\forall~t\in(t_0,T).$$
Since  $\bar u^{N(t_0,y_0)}_{t_0,y_0}(\cdot)$ is optimal norm
control for Problem $(NP)_{t_0,y_0}$ (see Lemma\ref{feedback-1}),
the above equality implies that $F(\cdot;y_F(\cdot;t_0,y_0)$ is the
optimal control to  $(NP)_{t_0,y_0}$. This completes the proof.
\endpf

\end{document}